\newcommand{\rs}[1]{{\mbox{\scriptsize \sc #1}}}
\newcommand{\vc}[1]{{\boldsymbol #1}}
\newcommand{\sr}[1]{{\cal #1}}
\newcommand{\dd}[1]{\mathbb{#1}}
\newcommand{\cp}{{\Gamma}}
\newcommand{\rmn}[1]{\if#11I\else {\if#12I\hspace{-0.12ex}I\hspace{-0.85ex}\else {\if #13I\hspace{-0.16ex}I\hspace{-0.16ex}I\hspace{-1.6ex}\else I\hspace{-1.2ex}V \fi} \fi} \fi}
\newcommand{\rt}[1]{({\mbox{\scriptsize \sc #1}})}
\newcommand{\eqn}[1]{(\ref{eqn:#1})}
\newcommand{\lem}[1]{Lemma~\ref{lem:#1}}
\newcommand{\thr}[1]{Theorem~\ref{thr:#1}}
\newcommand{\tab}[1]{Table~\ref{tab:#1}}
\newcommand{\ass}[1]{Assumption~\ref{ass:#1}}
\newcommand{\rem}[1]{Remark~\ref{rem:#1}}
\newcommand{\fig}[1]{Figure~\ref{fig:#1}}
\newcommand{\app}[1]{Appendix~\ref{app:#1}}
\newcommand{\sectn}[1]{Section~\ref{sect:#1}}
\newcommand{\thrt}[1]{\ref{thr:#1}}
\newcommand{\remt}[1]{\ref{rem:#1}}
\newcommand{\sect}[1]{\ref{sect:#1}}
\newcommand{\br}[1]{\langle #1 \rangle}
\newcommand{\ol}{\overline}
\newcommand{\ul}{\underline}
\newcommand{\pend}{\hfill \thicklines \framebox(6.6,6.6)[l]{}}
\newenvironment{proof}{\noindent {\sc  Proof.} \rm}{\pend}
\newenvironment{proof*}[1]{\noindent {\sc  #1} \rm}{\pend}
\newtheorem{theorem}{Theorem}[section]
\newtheorem{lemma}{Lemma}[section]
\newtheorem{assumption}{Assumption}[section]
\newtheorem{remark}{Remark}[section]
\newtheorem{corollary}{Corollary}[section]
\newcommand{\setsection}[2] {
\setcounter{section}{#1}
\setcounter{subsection}{0}
\setcounter{equation}{0}
\setcounter{conjecture}{0}
\setcounter{assumption}{0}
\setcounter{question}{0}
\setcounter{definition}{0}
\setcounter{theorem}{0}
\setcounter{corollary}{0}
\setcounter{lemma}{0}
\setcounter{proposition}{0}
\setcounter{remark}{0}
\setcounter{appen}{0}
\setsection*{\large \bf \thesection. #2}}
\newenvironment{mylist}[1]{\begin{list}{}
{\setlength{\itemindent}{#1mm}}
{\setlength{\itemsep}{0ex plus 0.2ex}}
{\setlength{\parsep}{0.5ex plus 0.2ex}}
{\setlength{\labelwidth}{10mm}}
}{\end{list}}
\newcommand{\setnewcounter} {
\setcounter{subsection}{0}
\setcounter{equation}{0}
\setcounter{conjecture}{0}
\setcounter{assumption}{0}
\setcounter{question}{0}
\setcounter{definition}{0}
\setcounter{theorem}{0}
\setcounter{corollary}{0}
\setcounter{lemma}{0}
\setcounter{proposition}{0}
\setcounter{remark}{0}
}
\begin{document}
\title{\bf \Large A superharmonic vector for a nonnegative matrix with QBD block structure and its application to a Markov modulated two dimensional reflecting process}

\author{Masakiyo Miyazawa}
\date{Revised, July 17, 2015}

\maketitle

\begin{abstract}
  Markov modulation is versatile in generalization for making a simple stochastic model which is often analytically tractable to be more flexible in application. In this spirit, we modulate a two dimensional reflecting skip-free random walk in such a way that its state transitions in the boundary faces and interior of a nonnegative integer quadrant are controlled by Markov chains. This Markov modulated model is referred to as a 2d-QBD process according to \citet{Ozaw2013}. We are interested in the tail asymptotics of its stationary distribution, which has been well studied when there is no Markov modulation. 
  
  Ozawa studied this tail asymptotics problem, but his answer is not analytically tractable. We think this is because Markov modulation is so free to change a model even if the state space for Markov modulation is finite. Thus, some structure, say, extra conditions, would be needed to make the Markov modulation analytically tractable while minimizing its limitation in application. 
  
  The aim of this paper is to investigate such structure for the tail asymptotic problem. For this, we study the existence of a right subinvariant positive vector, called a superharmonic vector, of a nonnegative matrix with QBD block structure, where each block matrix is finite dimensional. We characterize this existence under a certain extra assumption. We apply this characterization to the 2d-QBD process, and derive the tail decay rates of its marginal stationary distribution in an arbitrary direction. This solves the tail decay rate problem for a two node generalized Jackson network, which has been open for many years.
\end{abstract}
  
\begin{quotation}
\noindent {\bf Keywords:} Subinvariant vector, QBD structured matrix, Markov modulation, two dimensional reflecting random walk, generalized Jackson network, stationary distribution, large deviations.
\end{quotation}

\section{Introduction}
\label{sect:Introduction}

Our primary interest is in methodology for deriving the tail asymptotics of the stationary distribution of a Markov modulated two dimensional reflecting random walk for queueing network applications, provided it is stable. This process has two components, front and background processes. We assume that the front process is a skip-free reflecting random walk on a nonnegative quarter plane of lattice, and the background process has finitely many states. We are particularly interested in a two node generalized Jackson network for its application.

According to \citet{Ozaw2013}, we assume the following transition structure. The state space of the front process is composed of the inside of the quarter plane and three boundary faces, the origin and the two half coordinate axes. Within each region, state-transitions are homogeneous, that is, subject to a Markov modulated random walk, but different regions may have different state-transitions. Between pairs of the four regions, state-transitions may also be different. See \fig{RRW1} in \sectn{two} for their details. This Markov modulated two dimensional random walk is called a discrete-time 2d-QBD process, 2d-QBD process for short, in \cite{Ozaw2013}. We adopt the same terminology. This process is flexible enough to handle many two node queueing networks in continuous time through uniformization. The generalized Jackson network is such an example.

For the 2d-QBD process, we assume that it has a stationary distribution, and denote a random vector subject to it by $(\vc{L}, J)$, where $\vc{L}$ represents a random walk component taking values in $\dd{R}_{+}^{2}$ while $J$ represents a background state. For $i=1,2$, we consider the tail asymptotics by logarithmic ratios of the stationary tail probabilities in the $i$-th coordinate directions:
\begin{align}
\label{eqn:decay coordinate}
  \frac 1n \log \dd{P}( L_{i} > n, L_{3-i} = \ell, J = k), \qquad n \to \infty,
\end{align}
for each fixed $\ell \ge 0$ and background state $k$, and those of the marginal stationary distribution in an arbitrary direction $\vc{c} \equiv (c_{1}, c_{2})$:
\begin{align}
\label{eqn:decay arbitrary}
  \frac 1x \log \dd{P}( c_{1} L_{1} + c_{2} L_{2} > x), \qquad x \to \infty.
\end{align}
It will be shown that those ratios converges to constants (Theorems \thrt{lower bound 1} and \thrt{decay rate 1}). They are negative, and their absolute values are called exponential decay rates. We demonstrate those tail asymptotic results for a two node generalized Jackson network with Markov modulated arrivals and phase type service time distributions. This solves the problem which has been open for many years (see \sectn{Jackson} for details).

\citet{Ozaw2013} studied the tail asymptotics in the coordinate directions including \eqn{decay coordinate}. He showed that the method for a two-dimensional reflecting random walk studied by \citet{Miya2009} is applicable with help of invariant vectors obtained by \citet{LiZhao2003}. We refer to this method as a QBD approach, which is composed of the following three key steps.
\begin{itemize}
\item [1)] Formulate the 2d-QBD process as a one dimensional QBD process with infinitely many background states, where one of the coordinate axes is taken as a level.
\item [2)] Find right and left invariant vectors of a nonnegative matrix with QBD block structure, which will be introduced shortly, and get upper and lower bounds of the tail decay rates.
\item [3)] Derive the tail decay rates, combining those results in the two directions.
\end{itemize}
Here, an infinite dimensional square matrix is said to have QBD block structure if it is partitioned into blocks in such a way that each block is a square matrix of the same size except for the first row and first column blocks, the whole matrix is block tridiagonal and each row of blocks is repeated and shifted except for the first two rows  (see \eqn{QBD structure} for its definite form). In step 1), the blocks for the one dimensional QBD are infinite dimensional, while, in step 2), those for the nonnegative matrix are finite dimensional.

A hard part of this QBD approach is in step 2). In \cite{Ozaw2013}, the invariant vectors are only obtained by numerically solving certain parametrized equations over a certain region of parameters. This much degrades applicability of the tail asymptotic results. For example, it is hard to get useful information from them for the tail asymptotics in the two node generalized Jackson network (see, e.g, \cite{FujiTakaMaki1998,KatoMakiTaka2004}). We think this analytic intractability can not be avoided because no structural condition is assumed for the Markov modulation. In applications, it may have certain structure. Thus, it is interesting to find conditions for the invariant vectors to be analytically tractable while minimizing limitations in application.

Another problem in \cite{Ozaw2013} is complicated descriptions. They can not be avoided because of the complicated modeling structure, but we easily get lost in computations. We think here simplification or certain abstraction is needed.

In addition to those two problems, the QBD approach is not so useful to study the tail asymptotics in an arbitrary direction. For this, it is known that the stationary inequalities in terms of moment generating functions are useful in the case that there is no Markov modulation (e.g., see \cite{KobaMiya2014,Miya2011}). So far, it is interesting to see whether this moment generating function approach still works under Markov modulation.

We attack those three problems in this paper. We first consider the description problem, and find a simpler matrix representation for a nonnegative matrix with QBD block structure. This representation is referred to as a canonical form. We then consider the problem in step 2). 

For this, we relax the problem by considering a right subinvariant positive vector, which is said to be superharmonic, instead of a right invariant positive vector, which is said to be harmonic. It is known that the existence of a right subinvariant vector is equivalent to that of a left subinvariant nonnegative vector (e.g., see \cite{Vere1967}). When a nonnegative matrix is stochastic, a right subinvariant vector can be viewed as a superharmonic function. Because of this fact, we use the terminology superharmonic vector. In the stochastic case, it obviously exists. When the matrix is substochastic and does not have the boundary blocks, this problem has been considered in studying a quasi-stationary distribution for QBD processes (see, e.g., \cite{Kiji1993,LiZhao2002,LiZhao2003}).

In step 2), we do not assume any stochastic or substochastic condition for a nonnegative matrix with QBD block structure, which is crucial in our applications. As we will see, we can find necessary and sufficient conditions for such a matrix to have a superharmonic vector (see \thr{characterization 1}). The sufficiency is essentially due to \citet{LiZhao2003} and related to \citet{BeanPollTayl2000} (see Remarks \remt{super-h 2} and \remt{characterization 1}). However, this characterization is not useful in application as we already discussed. So, we will assume a certain extra condition to make an answer to be tractable. Under this extra assumption, we characterize the existence of a superharmonic vector using primitive data on the block matrices (\thr{super-h 2}).

This characterization enables us to derive the tail asymptotics of the stationary distribution in the coordinate directions for the 2d-QBD process. For the problem of the tail asymptotics in an arbitrary direction, we show that the moment generating function approach can be extended for the Markov modulated case. For this, we introduce a canonical form for the Markov modulated two dimensional random walk, which is similar to that for a nonnegative matrix with QBD block structure.

There has been a lot work on tail asymptotic problems in queueing networks (see, e.g., \cite{Miya2011} and references therein). Most of studies focus on two dimensional reflecting processes or two node queueing networks. The 2d-QBD process belongs to this class of models, but allows them to have background processes with finitely many states. There is a huge gap between finite and infinite numbers of background states, but we hope the present results will stimulate to study higher dimensional tail asymptotic problems.

This paper is made up by five sections and appendices. \sectn{nonnegative} drives necessary and sufficient conditions for a nonnegative matrix with QBD block structure to have a right sub-invariant vector with and without extra assumptions. This result is applied to the 2d-QBD process, and the tail decay rates of its stationary distribution are derived in \sectn{application}. The tail decay rates of the marginal stationary distribution in an arbitrary direction are obtained for the generalized Jackson network in \sectn{two node}. We finally give some concluding remarks in \sectn{concluding}.

We summarize basic notation which will be used in this paper (see Tables 1 and 2).
\begin{table}[htdp]
\begin{center}
\begin{tabular}{llll}
  $\dd{Z}$ & the set of all integers, & $\dd{Z}_{+}$ & the set of all nonnegative integers,\\
  $\dd{R}$ & the set of all real numbers, $\,$ & $\dd{R}_{+}$ & the set of all nonnegative real numbers,\\
  $\dd{H}$ & $\{-1, 0, 1\}$, & $\dd{H}_{+}$ & $\{0, 1\}$,\\
  $\br{\vc{x}, \vc{y}}$ & $x_{1}y_{1} + x_{2}y_{2}$ for $\vc{x}, \vc{y} \in \dd{R}^{2}$, & $\vc{1}$ & the column vector whose entries are all units.
\end{tabular}
\caption{Notation for sets of numbers and vectors}
\label{tab:set}
\end{center}\vspace{-4ex}
\end{table}

 For nonnegative square matrices $T, T_{i}, T_{ij}$ with indices $i, j \in \dd{Z}$ such that $T_{i}$ and $T_{ij}$ are null matrices except for finitely many $i$ and $j$, we will use the following conventions.
 
\begin{table}[htdp]
\begin{center}
\begin{tabular}{ll}
  $c_{p}(T)$ & $\sup\{u \ge 0; \sum_{n=0}^{\infty} u^{n} T^{n} < \infty \}$: the convergence parameter of $T$,\\
  $\gamma_{\rs{pf}}(T)$ & the Perron-Frobenius eigenvalue of $T$ if $T$ is finite dimensional, \\
  & while it equals $c_{p}(T)^{-1}$ if $T$ is infinite dimensional.\\
  $T_{*}(\theta)$ & $\sum_{i \in \dd{Z}} e^{i \theta} T_{i}$ for $\theta \in \dd{R}$: the matrix MGF of $\{T_{i}\}$,\\
  & where MGF is for moment generating function, \\
  $T_{**}(\vc{\theta})$ & $\sum_{i,j \in \dd{Z}} e^{(i,j) \vc{\theta}} T_{ij}$ for $\vc{\theta} \in \dd{R}^{2}$: the matrix MGF of $\{T_{ij}\}$,\\
   $\gamma^{(\cdot)}(\cdot)$ & $\gamma^{(T_{*})}(\theta) = \gamma_{\rs{pf}}(T_{*}(\theta))$, $\quad \gamma^{(T_{**})}(\vc{\theta}) = \gamma_{\rs{pf}}(T_{**}(\vc{\theta}))$,\\
   $(I-T)^{-1}$ & $\sum_{n=0}^{\infty} T^{n}$ (this is infinite if $c_{p}(T) < 1$),\\
   $\Delta_{\vc{x}}$ & the diagonal matrix whose $i$-th diagonal entry is $x_{i}$,\\
   & where $x_{i}$ is the $i$-th entry of vector $\vc{x}$.
\end{tabular}
\caption{Conventions for matrices and their MGF}
\label{tab:matrix}
\end{center}\vspace{-2ex}
\end{table}
Here, the sizes of those matrices must be the same among those with the same type of indices, but they may be infinite. We also will use those matrices and related notation when the off-diagonal entries of $T, T_{i}, T_{ij}$ are nonnegative.

\section{Nonnegative matrices and QBD block structure}
\setnewcounter
\label{sect:nonnegative}

Let $K$ be a nonnegative square matrix with infinite dimension. Throughout this section, we assume the following regularity condition.
\begin{itemize}
\item [(\sect{nonnegative}a)] $K$ is irreducible, that is, for each entry $(i, j)$ of $K$, there is some $n \ge1$ such that the $(i,j)$ entry of $K^{n}$ is positive.
\end{itemize}

\subsection{Superharmonic vector}
\label{sect:super}

In this subsection, we do not assume any assumption other than (\sect{nonnegative}a), and introduce some basic notions. A positive column vector $\vc{y}$ satisfying
\begin{align}
\label{eqn:super-h 1}
  K \vc{y} \le \vc{y}
\end{align}
is called a superharmonic vector of $K$, where the inequality of vectors is entry-wise. The condition \eqn{super-h 1} is equivalent to that there exists a positive row vector $\vc{x}$ satisfying $\vc{x} K \le \vc{x}$. This $\vc{x}$ is called a sub-invariant vector. Instead of \eqn{super-h 1}, if, for $u > 0$,
\begin{align}
\label{eqn:super-h t}
  u K \vc{y} \le \vc{y},
\end{align}
then $\vc{y}$ is called a $u$-superharmonic vector. We will not consider this vector, but most of our arguments are parallel to those for a superharmonic vector because $\vc{y}$ of \eqn{super-h t} is superharmonic for $uK$.

These conditions can be given in terms of the convergence parameter $c_{p}(K)$ of $K$ (see \tab{matrix} for its definition). As shown in Chapter 5 of the book of Nummelin \cite{Numm1984} (see also Chapter 6 of \cite{Sene1981}), 
\begin{align}
\label{eqn:cp harmonic 1}
  c_{p}(K) = \sup \{ u \ge 0; u K \vc{y} \le \vc{y} \mbox{ for some } \vc{y} > 0 \},
\end{align}
or equivalently $c_{p}(K) = \sup \{ u\ge 0; u \vc{x} K \le \vc{x} \mbox{ for some } \vc{x} > 0 \}$. Applying this fact to $K$, we have the following lemma. For completeness, we give its proof.
\begin{lemma}
\label{lem:super-h 1}
  A nonnegative matrix $K$ satisfying (\sect{nonnegative}a) has a superharmonic vector if and only if $c_{p}(K) \ge 1$.
\end{lemma}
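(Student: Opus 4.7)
The plan is to read off both directions from the variational characterization \eqn{cp harmonic 1} of $c_{p}(K)$ which the paper has just quoted from \cite{Numm1984} and \cite{Sene1981}. For the ``only if'' direction my argument is a one-liner: if a positive $\vc{y}$ satisfies $K\vc{y} \le \vc{y}$, then $\vc{y}$ is an admissible witness for $u = 1$ in the supremum appearing in \eqn{cp harmonic 1}, so $c_{p}(K) \ge 1$.

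For the ``if'' direction, assume $c_{p}(K) \ge 1$. I would split into two subcases. When $c_{p}(K) > 1$, pick any $u$ in the nonempty interval $(1, c_{p}(K))$; by \eqn{cp harmonic 1} there exists a positive $\vc{y}$ with $u K \vc{y} \le \vc{y}$, hence $K \vc{y} \le u^{-1} \vc{y} \le \vc{y}$ and $\vc{y}$ is superharmonic. The delicate case is $c_{p}(K) = 1$, where the strict-inequality trick is unavailable and one instead needs the supremum in \eqn{cp harmonic 1} to be attained at $u = c_{p}(K)$. This attainment, namely the existence of a positive $R$-subinvariant right vector at $R = c_{p}(K)$, is the standard Perron-like theorem for irreducible nonnegative matrices from \cite{Numm1984,Sene1981}, and is precisely where the irreducibility hypothesis (\sect{nonnegative}a) enters. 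Applied at $R = c_{p}(K) = 1$ it yields the required $\vc{y} > 0$ with $K\vc{y} \le \vc{y}$ at once.

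The main obstacle, then, is not in either implication itself but in citing the right version of the background theorem: I must be sure the statement invoked from \cite{Numm1984,Sene1981} genuinely covers attainment of the sup at the endpoint $u = c_{p}(K)$, not merely the existence of $u$-superharmonic vectors for each $u$ strictly below $c_{p}(K)$. Once that point is pinned down, the whole proof reduces to the two short comparisons of inequalities described above.
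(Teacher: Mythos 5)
Your proof is correct, but it takes a genuinely different route from the paper's. For the non-trivial direction ($c_p(K)\ge 1 \Rightarrow$ existence), the paper does not split cases and does not invoke any attainment theorem at the endpoint; instead it proceeds by approximation: for each $u<1$ it picks a positive $u$-subinvariant vector $\vc{z}(u)$ from \eqn{cp harmonic 1}, normalizes it to $\vc{y}(u)$ with $y_{0}(u)=1$ and $y_{i}(u)\le 1$ via the truncation $y_{i}(u)=\min(1, z_{i}(u)/z_{0}(u))$, and then sends $u\uparrow 1$ taking $\vc{y}=\liminf_{u\uparrow 1}\vc{y}(u)$; a Fatou-type inequality yields $K\vc{y}\le\vc{y}$, and positivity of $\vc{y}$ is recovered from irreducibility together with $y_{0}=1$. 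Your approach instead dispatches $c_p(K)>1$ by a one-line comparison and, for the endpoint case $c_p(K)=1$, leans on the structural fact that an irreducible nonnegative matrix always admits a positive right $R$-subinvariant vector at $R=c_p$; that fact is indeed standard (it is essentially the Vere-Jones result the paper cites as \cite{Vere1967}, and it also appears as Theorem 6.2 of \cite{Sene1981}, which the paper itself invokes a page later), and you correctly flag that the only delicate point is making sure the cited theorem covers attainment at $R=c_p$ rather than merely for $u<c_p$. What the paper's limiting argument buys is self-containment: it derives the conclusion directly from the variational identity \eqn{cp harmonic 1} without importing the heavier $R$-classification machinery. What your version buys is brevity and transparency — the $c_p>1$ case is trivial, and the $c_p=1$ case is outsourced to a well-known theorem — at the cost of leaning on exactly the background result the paper is trying to illustrate from scratch. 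Both are legitimate; just be sure, if you use your version, to cite the attainment statement precisely (Vere-Jones~\cite{Vere1967} or Seneta~\cite{Sene1981}, Thm.~6.2) rather than only the characterization \eqn{cp harmonic 1}, since the latter alone does not assert that the supremum is attained.
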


\begin{proof}
  If $K$ has a superharmonic vector, then we obviously have $c_{p}(K) \ge 1$ by \eqn{cp harmonic 1}. Conversely, suppose $c_{p}(K) \ge 1$. Then, by \eqn{cp harmonic 1}, for any positive $u < 1$, we can find a positive vector $\vc{z}(u)$ such that
\begin{align}
\label{eqn:uK}
  u K \vc{z}(u) \le \vc{z}(u).
\end{align}
Denote the $i$-th entry of $\vc{z}(u)$ by $z_{i}(u)$, and define vector $\vc{y}(u)$ whose $i$-th entry $y_{i}(u)$ is given by
\begin{align*}
  y_{i}(u) = \min(1, z_{i}(u)/z_{0}(u)).
\end{align*}
Then, it follows from \eqn{uK} that $y_{0}(u) = 1$, $y_{i}(u) \le 1$ for all $i$, and
\begin{align*}
  u K \vc{y}(u) \le \vc{y}(u).
\end{align*}
Taking the limit infimum of both sides of the above inequality as $u \uparrow 1$, and letting $\vc{y} = \liminf_{u \uparrow 1} \vc{y}(u)$, we have \eqn{super-h 1}. Thus, $K$ indeed has a superharmonic vector $\vc{y}$, which must be positive by the irreducible assumption (\sect{nonnegative}a).
\end{proof}

The importance of Condition \eqn{super-h 1} lies in the fact that $\Delta_{\vc{y}}^{-1} K \Delta_{\vc{y}}$ is substochastic, that is, $K$ can be essentially considered as a substochastic matrix. This enables us to use probabilistic arguments for manipulating $K$ in computations.

\subsection{QBD block structure and its canonical form}
\label{sect:QBD}

We now assume further structure for $K$. Let $m_{0}$ and $m$ be arbitrarily given positive integers. Let $A_{i}$ and $B_{i}$ for $i=-1,0,1$ be nonnegative matrices such that $A_{i}$ for $i=-1,0,1$ are $m \times m$ matrices, $B_{-1}$ is an $m_{0} \times m$ matrix, $B_{0}$ is an $m_{0} \times m_{0}$ matrix and $B_{1}$ is $m \times m_{0}$ matrix. We assume that $K$ has the following form:
\begin{align}
\label{eqn:QBD structure}
  K = \left (\begin{matrix}
B_{0} & B_{1} & 0 & 0 & 0 & \cdots \cr
 B_{-1} & A_{0} & A_{1} & 0 & 0 & \cdots  \cr
 0 & A_{-1} & A_{0} & A_{1} & 0 & \cdots \cr
 0 & 0 & A_{-1} & A_{0} & A_{1} & \cdots \cr
 & \ddots & \ddots & \ddots & \ddots & \cdots \cr
 \end{matrix} \right ).
\end{align}

If $K$ is stochastic, then it is the transition matrix of a discrete-time QBD process. Thus, we refer to $K$ of \eqn{QBD structure} as a nonnegative matrix with QBD block structure.

As we discussed in \sectn{Introduction}, we are primarily interested in tractable conditions for $K$ to have a superharmonic vector. Denote this vector by $\vc{y} \equiv (\vc{y}_{0}, \vc{y}_{1}, \ldots)^{\rs{t}}$. That is, $\vc{y}$ is positive and satisfies the following inequalities.
\begin{align}
\label{eqn:y 0}
 & B_{0}\vc{y}_{0} + B_{1}\vc{y}_{1} \le \vc{y}_{0},\\
\label{eqn:y 1}
 & B_{-1}\vc{y}_{0} + A_{0}\vc{y}_{1} + A_{1}\vc{y}_{2} \le \vc{y}_{1},\\
\label{eqn:y +}
 & A_{-1}\vc{y}_{n-1} + A_{0}\vc{y}_{n} + A_{1}\vc{y}_{n+1} \le \vc{y}_{n}, \qquad n=2, \ldots.
\end{align}

Although the QBD block structure is natural in applications, there are two extra equations \eqn{y 0} and \eqn{y 1} which involve the boundary blocks $B_{i}$. Let us consider how to reduce them to one equation. From \eqn{y 0}, $B_{0}\vc{y}_{0} \le \vc{y}_{0}$ and $B_{0}\vc{y}_{0} \ne \vc{y}_{0}$. Hence, if $c_{p}(B_{0}) = 1$, then $\vc{y}_{0}$ must be the left invariant vector of $B_{0}$ (see Theorem 6.2 of \cite{Sene1981}), but this is impossible because $\vc{y}_{1} > \vc{0}$. Thus, we must have $c_{p}(B_{0}) > 1$, and therefore $(I - B_{0})^{-1}$ is finite (see our convention (\tab{matrix}) for this inverse). Let
\begin{align}
\label{eqn:c 0}
  C_{0} = B_{-1} (I - B_{0})^{-1} B_{1} + A_{0},
\end{align}
then \eqn{y 0} and \eqn{y 1} yield
\begin{align}   
\label{eqn:c 1}
  C_{0} \vc{y}_{1} + A_{1}\vc{y}_{2} \le \vc{y}_{1}.
\end{align}
This suggests that we should define a matrix $\ol{K}$ as
\begin{align}
\label{eqn:ol K}
  \ol{K} = \left (\begin{matrix}
 C_{0} & A_{1} & 0 & 0 & 0 & \cdots \cr
 A_{-1} & A_{0} & A_{1} & 0 & 0 & \cdots  \cr
 0 & A_{-1} & A_{0} & A_{1} & 0 & \cdots \cr
 0 & 0 & A_{-1} & A_{0} & A_{1} & \cdots \cr
 & \ddots & \ddots & \ddots & \ddots & \ddots \cr
 \end{matrix} \right ),
\end{align}
where $C_{0}$ is defined by \eqn{c 0}. Denote the principal matrix of $K$ (equivalently, $\ol{K}$) obtained by removing the first row and column blocks by $K_{+}$. Namely,
\begin{align}
\label{eqn:+ K}
  K_{+} = \left (\begin{matrix}
 A_{0} & A_{1} & 0 & 0 &  \cdots \cr
 A_{-1} & A_{0} & A_{1} & 0 &  \cdots  \cr
 0 & A_{-1} & A_{0} & A_{1} &  \cdots \cr
 & \ddots & \ddots & \ddots &  \ddots \cr
 \end{matrix} \right ).
\end{align}

\begin{lemma}
\label{lem:super-h 2}
  (a) $K$ has a superharmonic vector if and only if $\ol{K}$ has a superharmonic vector. (b) $\max(c_{p}(K), c_{p}(\ol{K})) \le c_{p}(K_{+})$. (c) If $c_{p}(K) \ge 1$, then $c_{p}(K) \le c_{p}(\ol{K}) \le c_{p}(K_{+})$.
\end{lemma}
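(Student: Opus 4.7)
My plan is to treat the three parts in turn, leveraging the boundary-to-bulk reduction carried out in the paragraph immediately preceding the lemma, which rewrites \eqn{y 0} and \eqn{y 1} as the single inequality \eqn{c 1} by eliminating $\vc{y}_0$ through $(I - B_0)^{-1}$. For the forward direction of part (a), given a superharmonic $\vc{y} = (\vc{y}_0, \vc{y}_1, \ldots)^\rs{t}$ of $K$, that derivation already establishes $c_p(B_0) > 1$ (so $(I - B_0)^{-1}$ is finite) and exhibits the tail $(\vc{y}_1, \vc{y}_2, \ldots)^\rs{t}$ as a superharmonic vector of $\ol{K}$ via \eqn{c 1} and \eqn{y +}. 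For the converse I would start from a positive superharmonic vector $(\vc{y}_1, \vc{y}_2, \ldots)^\rs{t}$ of $\ol{K}$, whose very definition presupposes $c_p(B_0) > 1$, and set $\vc{y}_0 := (I - B_0)^{-1} B_1\vc{y}_1$. Expanding the geometric series yields $B_0\vc{y}_0 + B_1\vc{y}_1 = \vc{y}_0$, so \eqn{y 0} holds with equality; substituting $B_{-1}\vc{y}_0 = (C_0 - A_0)\vc{y}_1$ reduces \eqn{y 1} to \eqn{c 1}; and \eqn{y +} is inherited unchanged. Positivity of $\vc{y}_0$ follows from the irreducibility hypothesis (\sect{nonnegative}a), which guarantees that every entry of $(I - B_0)^{-1} B_1 \vc{y}_1$ is strictly positive.

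Part (b) reduces to the general fact that $c_p$ does not decrease under passage to a principal submatrix: if $M'$ is a principal submatrix of a nonnegative $M$, then entries of $(M')^n$ count only walks of length $n$ that remain within the index set of $M'$, while $(M^n)_{ij}$ counts all such walks, hence $((M')^n)_{ij} \le (M^n)_{ij}$ entry-wise and therefore $c_p(M') \ge c_p(M)$. Since $K_+$ is a principal submatrix of both $K$ and $\ol{K}$, the inequality $c_p(K_+) \ge \max(c_p(K), c_p(\ol{K}))$ follows.

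For part (c) the second inequality is already in (b), so what remains is $c_p(K) \le c_p(\ol{K})$ under the hypothesis $c_p(K) \ge 1$. I would fix any $u$ with $1 \le u < c_p(K)$, invoke \eqn{cp harmonic 1} to obtain a positive $\vc{y}$ with $uK\vc{y} \le \vc{y}$, and iterate the zeroth block inequality to get $\vc{y}_0 \ge u(I - uB_0)^{-1}B_1\vc{y}_1$. Inserting this bound into the first block inequality and applying the term-by-term comparison $u(I - uB_0)^{-1} \ge (I - B_0)^{-1}$, which is valid precisely because $u \ge 1$, yields $uC_0\vc{y}_1 + uA_1\vc{y}_2 \le \vc{y}_1$; together with the higher block inequalities this exhibits $(\vc{y}_1, \vc{y}_2, \ldots)^\rs{t}$ as a $u$-superharmonic vector of $\ol{K}$, so $u \le c_p(\ol{K})$, and letting $u \uparrow c_p(K)$ completes the argument. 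The one delicate step is this monotonicity of $(I - uB_0)^{-1}$, which requires $u \ge 1$, and is precisely the reason the hypothesis $c_p(K) \ge 1$ cannot be dropped from part (c).
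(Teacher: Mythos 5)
Your overall strategy matches the paper's, and parts (b) and (c) are sound. For (b) you invoke the general fact that the convergence parameter does not decrease when passing to a principal submatrix, whereas the paper restricts the superharmonic vector to its tail blocks and appeals to \eqn{cp harmonic 1}; the two arguments are interchangeable here. Your argument for (c) --- iterating the level-zero block of $uK\vc{y}\le\vc{y}$ to obtain $\vc{y}_{0}\ge u(I-uB_{0})^{-1}B_{1}\vc{y}_{1}$ and then using $u(I-uB_{0})^{-1}\ge(I-B_{0})^{-1}$ for $u\ge 1$ --- is exactly the paper's observation that $u\ol{K}\le\ol{uK}$ for $u\ge 1$, unpacked directly instead of passing through part (a) applied to $uK$.

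There is a genuine gap in the converse direction of (a). You assert that the definition of $\ol{K}$ ``presupposes $c_{p}(B_{0})>1$,'' but well-definedness of $\ol{K}$ only requires that $C_{0}=B_{-1}(I-B_{0})^{-1}B_{1}+A_{0}$ be entrywise finite, which is a priori strictly weaker: the zero pattern of $B_{-1}$ and $B_{1}$ could mask divergent entries of $(I-B_{0})^{-1}$. Since you need $(I-B_{0})^{-1}B_{1}\vc{y}_{1}$ itself to be finite when you set $\vc{y}_{0}$, this possibility must be excluded. The paper does so by contradiction: if $c_{p}(B_{0})\le 1$, then some maximal principal submatrix of $(I-B_{0})^{-1}$ is divergent in every row and column, and finiteness of $B_{-1}(I-B_{0})^{-1}B_{1}$ then forces $[B_{-1}]_{ki}[\sum_{s=0}^{n}B_{0}^{s}]_{ij}[B_{1}]_{j\ell}=0$ for all $n$ and for all $k,\ell$ at the indices $(i,j)$ of that submatrix, contradicting irreducibility (\sect{nonnegative}a) of $K$. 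Adding this argument closes the gap; the rest of your proof of (a), including the explicit positivity of $\vc{y}_{0}$ via irreducibility (a point the paper leaves implicit), is correct.
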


\begin{remark}
\label{rem:super-h 2}
A similar result for $K$ and $K_{+}$ is obtained in \citet{BeanPollTayl2000}.
\end{remark}

\begin{proof}
  Assume that $K$ has a superharmonic vector $\vc{y} \equiv (\vc{y}_{0}, \vc{y}_{1}, \ldots)^{\rs{t}}$. Then, we have seen that $c_{p}(B_{0}) > 1$, and therefore $(I-B_{0})^{-1} < \infty$. Define $\ol{\vc{y}} \equiv (\ol{\vc{y}}_{0}, \ol{\vc{y}}_{1}, \ldots)^{\rs{t}}$ by $\ol{\vc{y}}_{n} = \vc{y}_{n+1}$ for $n \ge 0$, and define $C_{0}$ by \eqn{c 0}. Then, from \eqn{c 1} , we have
\begin{align*}
  C_{0} \ol{\vc{y}}_{0} + A_{1} \ol{\vc{y}}_{1} \le \ol{\vc{y}}_{0}.
\end{align*}
This and \eqn{y +} verify that $\ol{\vc{y}}$ is superharmonic for $\ol{K}$. On the contrary, assume that $\ol{K}$ is well defined and $\ol{\vc{y}} \equiv (\ol{\vc{y}}_{0}, \ol{\vc{y}}_{1}, \ldots)^{\rs{t}}$ is superharmonic for $\ol{K}$. Obviously, the finiteness of $\ol{K}$ implies that $B_{-1} (I - B_{0})^{-1} B_{1}$ is finite. Suppose that $c_{p}(B_{0}) \le 1$, then some principal submatrix of $(I - B_{0})^{-1}$ has divergent entries in every row and column of this submatrix. Denote a collection of all such principal matrices which are maximal in their size by $\sr{P}_{0}$. Then, all entries $(i,j)$ of submatrices in $\sr{P}_{0}$, we must have, for all $n \ge 0$,
\begin{align*}
  [B_{-1}]_{ki} \left[\sum_{s=0}^{n} B_{0}^{s}\right]_{ij} [B_{1}]_{j\ell} = 0, \qquad \forall k, \ell \in \{1,2,\ldots, m\},
\end{align*}
because of the finiteness of $B_{-1} (I - B_{0})^{-1} B_{1}$. This contradicts the irreducibility (\sect{nonnegative}a) of $K$. Hence, we have $c_{p}(B_{0}) > 1$. Define $\vc{y} \equiv (\vc{y}_{0}, \vc{y}_{1}, \ldots)^{\rs{t}}$ as
\begin{align*}
  \vc{y}_{0} = (I - B_{0})^{-1} B_{1} \ol{\vc{y}}_{0}, \qquad \vc{y}_{n} = \ol{\vc{y}}_{n-1}, \quad n \ge 1,
\end{align*}
where $(I - B_{0})^{-1} < \infty$ because of $c_{p}(B_{0}) > 1$. Then, from \eqn{c 0}, we have
\begin{align*}
  C_{0} \vc{y}_{1} = C_{0} \ol{\vc{y}}_{0} = B_{-1} \vc{y}_{0} + A_{0} \vc{y}_{1},
\end{align*}
and therefore the fact that $C_{0} \ol{\vc{y}}_{0} + A_{1} \ol{\vc{y_{1}}} \le \ol{\vc{y}}_{0}$ implies \eqn{y 1}. Finally, the definition of $\vc{y}_{0}$ implies \eqn{y 0} with equality, while the definition of $\vc{y}_{n}$ for $n \ge 1$ implies \eqn{y +}. Hence, $\vc{y}$ is superharmonic for $K$. This proves (a). (b) is immediate from \eqn{cp harmonic 1} since $(\vc{y}_{1}, \ldots)^{\rs{t}}$ is superharmonic for $K_{+}$ if $(\vc{y}_{0}, \vc{y}_{1}, \ldots)^{\rs{t}}$ is superharmonic for $K$ (or $\ol{K}$). For (c), recall that the canonical form of $uK$ is denoted by $\ol{uK}$ for $u > 0$. If $u \ge 1$, we can see that $u \ol{K} \le \ol{uK}$, and therefore $c_{p}(K) \le c_{p}(\ol{K})$. This and (b) conclude (c).
\end{proof}
  
  By this lemma, we can work on $\ol{K}$ instead of $K$ so as to find a superharmonic vector. It is notable that all block matrices of $\ol{K}$ are $m \times m$ square matrices and it has repeated row and column structure except for the first row and first column blocks. This greatly simplifies computations. So far, we refer to $\ol{K}$ as the canonical form of $K$.
  
  In what follows, we will mainly work on the canonical form $\ol{K}$ of $K$. For simplicity, we will use $\vc{y} \equiv (\vc{y}_{0}, \vc{y}_{1}, \ldots)^{\rs{t}}$ for a superharmonic vector of $\ol{K}$.
  
\subsection{Existence of a superharmonic vector}
\label{sect:existence superh}

Suppose that $\ol{K}$ of \eqn{ol K} has a superharmonic vector $\vc{y} \equiv (\vc{y}_{0}, \vc{y}_{1}, \ldots)^{\rs{t}}$. That is,
\begin{align}
\label{eqn:oy 0}
 & C_{0} \vc{y}_{0} + A_{1} \vc{y}_{1} \le \vc{y}_{0}, \\
\label{eqn:oy +}
 & A_{-1} \vc{y}_{n-1} + A_{0} \vc{y}_{n} + A_{1} \vc{y}_{n+1} \le \vc{y}_{n}, \qquad n \ge 1.
\end{align}
In this section, we consider conditions for the existence of a superharmonic vector.

Letting $C_{1} = A_{1}$, we recall matrix moment generating functions for $\{A_{i}\}$ and $\{C_{i}\}$ (see \tab{matrix}):
\begin{align*}  
  A_{*}(\theta) = e^{-\theta} A_{-1} + A_{0} + e^{\theta} A_{1}, \qquad
  C_{*}(\theta) = C_{0} + e^{\theta} C_{1}, \qquad \theta \in \dd{R}.
\end{align*}
From now on, we always assume a further irreducibility in addition to (\sect{nonnegative}a).
\begin{itemize}
\item [(\sect{nonnegative}b)] $A_{*}(0)$ is irreducible.
\end{itemize}
Since $A_{*}(\theta)$ and $C_{*}(\theta)$ are nonnegative and finite dimensional square matrices, they have Perron-Frobenius eigenvalues $\gamma^{(A_{*})}(\theta) (\equiv \gamma_{\rs{pf}}(A_{*}(\theta)))$ and $\gamma^{(C_{*})}(\theta) (\equiv \gamma_{\rs{pf}}(C_{*}(\theta)))$, respectively, and their right eigenvectors $\vc{h}^{(A_{*})}(\theta)$ and $\vc{h}^{(C_{*})}(\theta)$, respectively. That is,
\begin{align}
\label{eqn:A 0}
 &  A_{*}(\theta) \vc{h}^{(A_{*})}(\theta) = \gamma^{(A_{*})}(\theta) \vc{h}^{(A_{*})}(\theta),\\
\label{eqn:C 0}
 & C_{*}(\theta) \vc{h}^{(C_{*})}(\theta) = \gamma^{(C_{*})}(\theta) \vc{h}^{(C_{*})}(\theta),
\end{align}
where $C_{*}(\theta)$ may not be irreducible, so we take a maximal eigenvalue among those which have positive right invariant vectors. Thus, $\vc{h}^{(A_{*})}(\theta)$ is positive, but $\vc{h}^{(C_{*})}(\theta)$ is nonnegative with possibly zero entries. These eigenvectors are unique up to constant multipliers.

It is well known the $\gamma^{(A_{*})}(\theta)$ and $\gamma^{(C_{*})}(\theta)$ are convex functions of $\theta$ (see, e.g., Lemma 3.7 of \cite{MiyaZwar2012}). Furthermore, their reciprocals are the convergence parameters of $A_{*}(\theta)$ and $C_{*}(\theta)$, respectively. It follows from the convexity of $\gamma^{(A_{*})}(\theta)$ and the fact that some entries of $A_{*}(\theta)$ diverge as $|\theta| \to \infty$ that
\begin{align}
\label{eqn:phi infty}
  \lim_{\theta \to -\infty} \gamma^{(A_{*})}(\theta) = \lim_{\theta \to +\infty} \gamma^{(A_{*})}(\theta) =
  +\infty .
\end{align}
We introduce the following notation.
\begin{align*}
  \Gamma^{(1d)}_{+} = \{\theta \in \dd{R}; \gamma^{(A_{*})}(\theta) \le 1\}, \qquad \Gamma^{(1d)}_{0} = \{\theta \in \dd{R}; \gamma^{(C_{*})}(\theta) \le 1\},
\end{align*}
where $\Gamma^{(1d)}_{0} \not= \emptyset$ implies that $C_{0}$ is finite, that is, $c_{p}(B_{0}) > 1$. By \eqn{phi infty}, $\Gamma^{(1d)}_{+}$ is a bounded interval or the empty set.

In our arguments, we often change the repeated row of blocks of $K$ and $\ol{K}$ so that they are substochastic. For this, we introduce the following notation. For each $\theta \in \dd{R}$ and $\vc{h}^{(A_{*})}(\theta)$ determined by \eqn{A 0}, let
\begin{align*}
  \widehat{A}^{(\theta)}_{\ell} = e^{\theta \ell} \Delta_{\vc{h}^{(A_{*})}(\theta)}^{-1} A_{\ell} \Delta_{\vc{h}^{(A_{*})}(\theta)}, \qquad \ell = 0, \pm 1,
\end{align*}
where we recall that $\Delta_{\vc{a}}$ is the diagonal matrix whose diagonal entry is given by the same dimensional vector $\vc{a}$. Let
\begin{align*}
  \widehat{A}^{(\theta)} = \widehat{A}^{(\theta)}_{-1} + \widehat{A}^{(\theta)}_{0} + \widehat{A}^{(\theta)}_{1}.
\end{align*}
Note that $\widehat{A}^{(\theta)} = \Delta_{\vc{h}^{(A_{*})}(\theta)}^{-1} A_{*}(\theta) \Delta_{\vc{h}^{(A_{*})}(\theta)}$, and therefore $\widehat{A}^{(\theta)} \vc{1} = \gamma^{(A_{*})}(\theta) \vc{1}$.

The following lemma is the first step in characterizing $c_{p}(K) \ge 1$.
\begin{lemma}
\label{lem:Gamma+}
  (a) $c_{p}(K_{+}) \ge 1$ if and only if $\Gamma^{(1d)}_{+} \ne \emptyset$, and therefore $c_{p}(K) \ge 1$ implies $\Gamma^{(1d)}_{+} \ne \emptyset$. (b) $c_{p}(K_{+}) = (\min \{\gamma^{A_{*}}(\theta); \theta \in \dd{R}\})^{-1}$.
\end{lemma}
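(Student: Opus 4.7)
My plan is to prove (b) first, since (b) immediately implies (a): the identity $c_p(K_+) = 1/\min_\theta \gamma^{(A_*)}(\theta)$ makes $c_p(K_+) \ge 1$ equivalent to $\min_\theta \gamma^{(A_*)}(\theta) \le 1$, i.e.\ to $\Gamma^{(1d)}_+ \ne \emptyset$; the ``therefore'' clause then follows by combining this with the chain $c_p(K_+) \ge c_p(\ol K) \ge c_p(K) \ge 1$ supplied by \lem{super-h 2}(c). Set $\gamma^{*} := \min_{\theta \in \dd{R}}\gamma^{(A_*)}(\theta)$, attained at some $\theta^{*} \in \dd{R}$ by convexity of $\gamma^{(A_*)}$ together with the coercivity \eqn{phi infty}.

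For the lower bound $c_p(K_+) \ge 1/\gamma^{*}$, I will build an explicit superharmonic vector via a level-geometric ansatz. For each $\theta \in \dd{R}$, set $\vc{h} := \vc{h}^{(A_*)}(\theta)$ and $\vc{z}_n := e^{n\theta}\vc{h}$ for $n \ge 0$. Substituting into $u K_+ \vc{z} \le \vc{z}$, every interior inequality collapses, by the eigen-identity $A_*(\theta)\vc{h} = \gamma^{(A_*)}(\theta)\vc{h}$, to the scalar constraint $u\gamma^{(A_*)}(\theta)\le 1$; the boundary row at $n=0$ is automatically dominated by the interior one since it lacks the $A_{-1}$-contribution. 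Applying \eqn{cp harmonic 1} with $u = 1/\gamma^{(A_*)}(\theta)$ gives $c_p(K_+) \ge 1/\gamma^{(A_*)}(\theta)$ for every $\theta$, and optimising over $\theta$ yields $c_p(K_+) \ge 1/\gamma^{*}$.

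For the upper bound $c_p(K_+) \le 1/\gamma^{*}$, I plan to pair a hypothetical right superharmonic vector against a left sub-invariant row vector built from the left Perron--Frobenius eigenvector $\vc{l}^{\rs{t}}$ of $A_*(\theta^{*})$, normalised by $\vc{l}^{\rs{t}} A_*(\theta^{*}) = \gamma^{*} \vc{l}^{\rs{t}}$. Define $\vc{w}^{\rs{t}}$ block-by-block by $\vc{w}_n^{\rs{t}} := e^{-n\theta^{*}}\vc{l}^{\rs{t}}$; a computation symmetric to the lower-bound one gives $(\vc{w}^{\rs{t}} K_+)_n = \gamma^{*} \vc{w}_n^{\rs{t}}$ for $n \ge 1$, and $(\vc{w}^{\rs{t}} K_+)_0 = \gamma^{*}\vc{w}_0^{\rs{t}} - e^{\theta^{*}}\vc{l}^{\rs{t}} A_1 \le \gamma^{*}\vc{w}_0^{\rs{t}}$. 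Assuming, for contradiction, that $u > 1/\gamma^{*}$ and $\vc{z} > 0$ satisfy $uK_+\vc{z}\le\vc{z}$, evaluating $\vc{w}^{\rs{t}} K_+\vc{z}$ in the two associative orders produces $(u\gamma^{*} - 1)\vc{w}^{\rs{t}}\vc{z} \le u e^{\theta^{*}}\vc{l}^{\rs{t}} A_1\vc{z}_0$. The principal obstacle is the residual case $\vc{w}^{\rs{t}}\vc{z} < \infty$, in which the pairing only gives a quantitative, not contradictory, inequality. I will close this gap by leveraging irreducibility of $K_+$ to propagate a lower bound $\vc{l}^{\rs{t}}\vc{z}_n \gtrsim e^{n\theta^{*}}$ along some subsequence (so that $\vc{w}^{\rs{t}}\vc{z}$ is necessarily infinite), or, should this propagation prove delicate, by replacing the pairing with a finite-truncation Collatz--Wielandt analysis of $\gamma_{\rs{pf}}(K_+^{(N)})$ followed by $N \to \infty$, in the spirit of the matrix-analytic R-matrix treatment underlying \cite{LiZhao2002}.
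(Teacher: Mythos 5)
Your lower bound $c_p(K_+) \ge 1/\gamma^*$ via the level-geometric ansatz $\vc{z}_n = e^{n\theta}\vc{h}^{(A_*)}(\theta)$ is correct and coincides with the paper's sufficiency argument. The genuine gap is in the converse direction, exactly where you flag it. Your pairing with $\vc{w}^{\rs{t}}_n = e^{-n\theta^*}\vc{l}^{\rs{t}}$ produces
\begin{align*}
  (\gamma^* - 1/u)\,\vc{w}^{\rs{t}}\vc{z} \;\le\; e^{\theta^*}\vc{l}^{\rs{t}}A_1\vc{z}_0,
\end{align*}
which is just a \emph{finite} upper bound on $\vc{w}^{\rs{t}}\vc{z}$ when $u > 1/\gamma^*$, not a contradiction. (In the bi-infinite, doubly Toeplitz version of $K_+$ the boundary defect $e^{\theta^*}\vc{l}^{\rs{t}}A_1\vc{z}_0$ disappears and the pairing does close; but $K_+$ is a one-sided principal submatrix of that operator, so $c_p(K_+) \ge c_p(\widetilde K_+)$ comes for free while the needed inequality goes the other way.) Your proposed fix (i) does not hold up: nothing in the hypotheses $uK_+\vc{z} \le \vc{z}$ and irreducibility forces $\vc{l}^{\rs{t}}\vc{z}_n \gtrsim e^{n\theta^*}$ along a subsequence, and even if such a subsequence lower bound held it would not by itself make $\sum_n e^{-n\theta^*}\vc{l}^{\rs{t}}\vc{z}_n$ diverge. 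Fix (ii) is too sketchy to evaluate; the delicate point, as in the Toeplitz-truncation literature, is precisely to control the corner defect of $K_+^{(N)}$ so that $\gamma_{\rs{pf}}(K_+^{(N)}) \uparrow \gamma^*$ rather than stalling below it.

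What is missing is the specific feature of the minimizer $\theta_0 = \arg\min_\theta\gamma^{(A_*)}(\theta)$: the first-order condition $(\gamma^{(A_*)})'(\theta_0) = 0$ translates (after the similarity $\Delta^{-1}_{\vc{h}^{(A_*)}(\theta_0)}\,\cdot\,\Delta_{\vc{h}^{(A_*)}(\theta_0)}$) into a \emph{zero-drift} Markov modulated random walk on $\dd{Z}_+$, hence one which hits level $0$ almost surely but with infinite expected hitting time. The paper conjugates the assumed superharmonic vector $\vc{y}^+$ of $K_+$ into a $\gamma^{(A_*)}(\theta_0)$-superharmonic vector $\widehat{\vc{y}}^{(\theta_0)}$ for this null-recurrent chain, normalizes it below by $\vc 1$ (possible since $m < \infty$), and then invokes Vere--Jones' domination lemma: a $\gamma$-superharmonic vector dominates the absorption generating function $\vc f^{(*)}_{n0}(\gamma,\theta_0)$. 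Null recurrence forces $\vc f^{(*)}_{n0}(1,\theta_0) = \vc 1$ with infinite derivative, so $\vc f^{(*)}_{n0}(\gamma,\theta_0) = \infty$ for every $\gamma > 1$, and thus $\widehat{\vc{y}}^{(\theta_0)}_n$ is infinite --- the contradiction your pairing cannot reach. This null-recurrence input is exactly the information that your purely algebraic pairing discards, which is why the boundary term leaves you with only a consistency bound. To complete your argument you would need to import this probabilistic step, or an analytically equivalent one.
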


  This lemma may be considered to be a straightforward extension of Theorem 2.1 of \citet{Kiji1993} from a substochastic to a nonnegative matrix. So, it may be proved similarly, but we give a different proof in \app{Gamma+}. There are two reasons for this. First, it makes this paper selfcontained. Second, we wish to demonstrate that it is hard to remove the finiteness of $m$ on block matrices.
  
  We now present necessary and sufficient conditions for $\ol{K}$, equivalently $K$, to have a superharmonic vector.

\begin{theorem} 
\label{thr:characterization 1}
(a) If $\Gamma^{(1d)}_{+} \ne \emptyset$, then $N \equiv (I-K_{+})^{-1}$ is finite, and therefore $G_{-} \equiv N_{11} A_{-1}$ is well defined and finite, where $N_{11}$ is the $(1,1)$-entry of $N$. (b) $c_{p}(\ol{K}) \ge 1$ holds if and only if $\Gamma^{(1d)}_{+} \ne \emptyset$ and
\begin{align}
\label{eqn:superH NS 1}
  \gamma_{\rs{pf}}(C_{0} + A_{1} G_{-}) \le 1.
\end{align}
If the equality holds in \eqn{superH NS 1}, then $c_{p}(\ol{K}) = 1$.
\end{theorem}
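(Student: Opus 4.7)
For part (a), the plan is to combine Lemma \lem{Gamma+}, which identifies $\Gamma^{(1d)}_+\neq\emptyset$ with $c_p(K_+)\ge 1$, with the gauge transformation already built in the paper. Let $\theta^*$ be a minimizer of $\gamma^{(A_*)}$, which exists by convexity and \eqn{phi infty}, and conjugate $K_+$ by the block-diagonal matrix $D$ whose $n$-th diagonal block is $e^{n\theta^*}\Delta_{\vc{h}^{(A_*)}(\theta^*)}$. Then $D^{-1}K_+D$ is a QBD whose repeated block rows sum to $\gamma^{(A_*)}(\theta^*)\vc{1}\le\vc{1}$, while its top block row leaks mass to the absorbing ``level $0$''. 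When $\gamma^{(A_*)}(\theta^*)<1$ the matrix is strictly substochastic; when $\gamma^{(A_*)}(\theta^*)=1$, convexity forces $\gamma^{(A_*)}{}'(\theta^*)=0$, so the transformed random walk has zero drift on the level coordinate and is absorbed almost surely. In either case the Green function has finite entries, so $N=(I-K_+)^{-1}$ does too, and $G_-=N_{11}A_{-1}$ follows.

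For the necessity direction in (b), let $\vc{y}=(\vc{y}_0,\vc{y}_1,\ldots)^{\rs{t}}$ be superharmonic for $\ol K$. Lemma \lem{super-h 2}(c) gives $c_p(K_+)\ge c_p(\ol K)\ge 1$, so Lemma \lem{Gamma+}(a) yields $\Gamma^{(1d)}_+\ne\emptyset$ and $N,G_-$ from (a) exist. Stacking \eqn{oy +} for $n\ge 1$ as
\[
  (I-K_+)(\vc{y}_1,\vc{y}_2,\ldots)^{\rs{t}}\,\ge\,(A_{-1}\vc{y}_0,\vc{0},\vc{0},\ldots)^{\rs{t}},
\]
left-multiplying by the nonnegative finite $N$, and reading off the first block gives $\vc{y}_1\ge N_{11}A_{-1}\vc{y}_0=G_-\vc{y}_0$; substituting into \eqn{oy 0} yields $(C_0+A_1G_-)\vc{y}_0\le\vc{y}_0$, and positivity of $\vc{y}_0$ together with Perron--Frobenius forces $\gamma_{\rs{pf}}(C_0+A_1G_-)\le 1$. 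For sufficiency I reverse this: take a Perron nonnegative eigenvector $\vc{v}_0$ of $C_0+A_1G_-$ for its eigenvalue $\gamma_{\rs{pf}}(C_0+A_1G_-)\le 1$, set $\vc{y}_0=\vc{v}_0$ and $\vc{y}_n=N_{n1}A_{-1}\vc{v}_0$ for $n\ge 1$. Reading $(I-K_+)N=I$ blockwise gives equality in \eqn{oy +} for every $n\ge 1$, and \eqn{oy 0} is precisely the chosen eigen-relation. Nonzeroness of $\vc{y}$ together with irreducibility of $\ol K$ promotes $\vc{y}\ge\vc{0}$ to $\vc{y}>\vc{0}$ by support propagation, and Lemma \lem{super-h 1} yields $c_p(\ol K)\ge 1$.

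For the tail assertion $\gamma_{\rs{pf}}(C_0+A_1G_-)=1\Rightarrow c_p(\ol K)=1$, I argue by contradiction. If $c_p(\ol K)>1$, pick $u\in(1,c_p(\ol K))$; by Lemma \lem{super-h 2}(c) this gives $u<c_p(K_+)$, so $N^u:=(I-uK_+)^{-1}$ is finite, and applying the necessity argument to $u\ol K$ yields $\gamma_{\rs{pf}}(C_0+A_1G_-^u)\le 1/u<1$ with $G_-^u:=uN_{11}^uA_{-1}\ge G_-$ entrywise. Monotonicity of the Perron eigenvalue then gives $\gamma_{\rs{pf}}(C_0+A_1G_-^u)\ge\gamma_{\rs{pf}}(C_0+A_1G_-)=1$, contradicting $1\le 1/u<1$. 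The main obstacle I anticipate is the borderline case $\gamma^{(A_*)}(\theta^*)=1$ in part (a), where finiteness of $N$ is not apparent from $c_p(K_+)\ge 1$ alone and genuinely needs the zero-drift absorption reading of the gauge-transformed QBD; the rest of the proof reduces to routine QBD bookkeeping once the test vector $\vc{y}_n=N_{n1}A_{-1}\vc{v}_0$ has been identified.
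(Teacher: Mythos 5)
Your proof takes essentially the same route as the paper's. For part (a), the paper gauge-transforms by $\theta_1 = \arg_\theta\min\{\theta: \gamma^{(A_*)}(\theta)=1\}$ (the left endpoint of $\Gamma^{(1d)}_+$) and argues the transformed walk is killed at level~$0$, so the Green function is finite; you instead conjugate by the global minimizer $\theta^*$ of $\gamma^{(A_*)}$, which gives uniformly strictly substochastic repeated rows when $\min\gamma^{(A_*)}<1$ (a slightly cleaner subcase) and reduces to the paper's zero-drift absorption argument when $\min\gamma^{(A_*)}=1$. The extraction $\vc{y}_1\ge N_{11}A_{-1}\vc{y}_0$ and the converse construction $\vc{z}=N(A_{-1}\vc{y}_0,\vc{0},\ldots)^{\rs{t}}$ are exactly the paper's steps; your explicit appeal to irreducibility of $\ol K$ to upgrade the Perron eigenvector to a strictly positive superharmonic vector is a welcome clarification that the paper compresses into ``for some $\vc{y}_0>\vc{0}$.''

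One genuine addition: the paper's proof does not address the final claim that equality in \eqn{superH NS 1} forces $c_p(\ol K)=1$. Your contradiction argument --- take $1<u<c_p(\ol K)$, note $u<c_p(K_+)$ by \lem{super-h 2}(c), form $G_-^u=uN^u_{11}A_{-1}\ge G_-$ entrywise, apply the necessity step to $u\ol K$ to get $\gamma_{\rs{pf}}(C_0+A_1G_-^u)\le 1/u<1$, and contradict monotonicity of the Perron eigenvalue --- is correct and fills that gap. Be a bit careful in the zero-drift case of part (a): ``absorbed almost surely'' is not by itself enough for a finite Green function; what one really uses is that the return probability to any fixed state is $<1$ because each excursion through level~$1$ carries a fixed positive killing probability (the paper is equally terse here, so this is not a gap relative to the source).
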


\begin{remark} {\rm
\label{rem:characterization 1}
  The sufficiency in (b) is essentially obtained in Theorem 6 of \cite{LiZhao2003}, which is used in Sections 3.4 and 3.5 of \cite{Ozaw2013}, where the eigenvalue $\gamma_{\rs{pf}}(C_{0} + A_{1} G_{-})$ corresponds to $u_{0}(1)$ in \cite{LiZhao2003}.  We do not need the function $u_{0}(\beta)$ there because we work on a nonnegative matrix while substochasticity is assumed in \cite{LiZhao2003}.
}\end{remark}

\begin{proof}
 (a)  Assume that $\theta \in \Gamma^{(1d)}_{+} \ne \emptyset$, then we can find a $\theta_{1}$ such that $\theta_{1} = \arg_{\theta} \min \{\theta \in \dd{R}; \gamma^{A_{*}}(\theta) = 1\}$. For this $\theta_{1}$, let $\vc{y}(\theta_{1}) = (\vc{h}^{(A_{*})}(\theta_{1}), e^{\theta_{1}} \vc{h}^{(A_{*})}(\theta_{1}), \ldots)^{\rs{t}}$, and let
\begin{align*}
  \breve{K}_{+}^{(\theta_{1})} = \Delta_{\vc{y}(\theta_{1})}^{-1} K_{+} \Delta_{\vc{y}(\theta_{1})}.
\end{align*}
It is easy to see that $\breve{K}_{+}^{(\theta_{1})}$ is strictly substochastic because the first row of blocks is defective. Hence, $(1-\breve{K}_{+}^{(\theta_{1})})^{-1}$ must be finite, and therefore $(I-K_{+})^{-1}$ is also finite. This proves (a).\\
 (b) Assume that $c_{p}(\ol{K}) \ge 1$, then $\Gamma^{(1d)}_{+} \ne \emptyset$ by \lem{Gamma+}. Hence, $(I-K_{+})^{-1}$ is finite by (a). Because of $c_{p}(\ol{K}) \ge 1$, $\ol{K}$ has a superharmonic vector. We denote this vector by $\vc{y} = \{\vc{y}_{n}; n=0,1,\ldots\}^{\rs{t}}$. Let $\vc{z} = \{\vc{y}_{n}; n=1,2,\ldots\}^{\rs{t}}$, then we have
\begin{align}
\label{eqn:superH y 0}
 & C_{0} \vc{y}_{0} + A_{1} \vc{y}_{1} \le \vc{y}_{0},\\
\label{eqn:superH y 1}
 & (A_{-1} \vc{y}_{0}, \vc{0}, \vc{0}, \ldots)^{\rs{t}} + K_{+} \vc{z} \le \vc{z}.
\end{align}
It follows from the second equation that $ [(I - K_{+})^{-1}]_{11} A_{-1} \vc{y}_{0}  \le \vc{y}_{1}$. Hence, substituting this into \eqn{superH y 0}, we have
\begin{align}
\label{eqn:superH y 00}
  (C_{0} + A_{1} G_{-}) \vc{y}_{0} \le \vc{y}_{0},
\end{align}
which is equivalent to \eqn{superH NS 1}. Conversely, assume \eqn{superH NS 1} and $\Gamma^{(1d)}_{+} \ne \emptyset$, then we have \eqn{superH y 00} for some $\vc{y}_{0} > \vc{0}$. Define $\vc{z}$ as
\begin{align*}
  \vc{z} = (I - K_{+})^{-1} (A_{-1} \vc{y}_{0}, \vc{0}, \vc{0}, \ldots)^{\rs{t}},
\end{align*}
then we get \eqn{superH y 1} with equality, and \eqn{superH y 00} yield \eqn{superH y 0}. Thus, we get the superharmonic vector $(\vc{y}_{0}, \vc{z})^{\rs{t}}$ for $\ol{K}$. This completes the proof.
\end{proof}

Using the notation in the above proof, let $\breve{N}^{(\theta_{1})} = (1-\breve{K}_{+}^{(\theta_{1})})^{-1}$, and let
\begin{align*}
  \widehat{G}^{(\theta_{1})}_{-} = \breve{N}^{(\theta_{1})}_{11} \widehat{A}^{(\theta_{1})}_{-1},
\end{align*}
then $\widehat{G}^{(\theta_{1})}_{-}$ must be stochastic because it is a transition matrix for the background state when the random walk component hits one level down. Furthermore,
\begin{align*}
  \widehat{G}^{(\theta_{1})}_{-} = \Delta_{\vc{h}^{(A_{*})}(\theta_{1})}^{-1} (e^{-\theta_{1}} G_{-}) \Delta_{\vc{h}^{(A_{*})}(\theta_{1})}.
\end{align*}
Hence, for $m=1$, $e^{-\theta_{1}} G_{-} = 1$, and therefore \eqn{superH NS 1} is identical with
\begin{align}
\label{eqn:superH NS 2}
  \gamma_{\rs{pf}}\left(C_{0} + e^{\theta_{1}} A_{1} \right) \le 1,
\end{align}
which agrees with $\gamma^{(C_{*})}(\theta_{1}) \le 1$. Hence, we have the following result.

\begin{corollary}
\label{cor:super-h 1}
For $m=1$, $c_{p}(K) \ge 1$ if and only if $\Gamma^{(1d)}_{+} \cap \Gamma^{(1d)}_{0} \ne \emptyset$.
\end{corollary}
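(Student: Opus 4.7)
The plan is to specialize \thr{characterization 1} to the scalar case $m=1$ and combine it with the identity $G_{-} = e^{\theta_{1}}$ already derived in the paragraph preceding the corollary. When $m=1$, all block matrices collapse to scalars, so $\gamma^{(A_{*})}(\theta) = e^{-\theta}A_{-1} + A_{0} + e^{\theta}A_{1}$ and $\gamma^{(C_{*})}(\theta) = C_{0} + e^{\theta}A_{1}$ are the values of $A_{*}(\theta)$ and $C_{*}(\theta)$ themselves, and $\gamma_{\rs{pf}}(C_{0} + A_{1}G_{-}) = C_{0} + e^{\theta_{1}}A_{1} = \gamma^{(C_{*})}(\theta_{1})$.

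For the ``only if'' direction, I would suppose $c_{p}(K) \ge 1$, use \lem{super-h 1} and \lem{super-h 2}(a) to pass to $c_{p}(\ol K) \ge 1$, and apply \thr{characterization 1}(b). This yields $\Gamma^{(1d)}_{+} \ne \emptyset$, so $\theta_{1}$ is well defined, together with $\gamma_{\rs{pf}}(C_{0} + A_{1}G_{-}) \le 1$. Under the reformulation above, this is $\gamma^{(C_{*})}(\theta_{1}) \le 1$, placing $\theta_{1}$ in $\Gamma^{(1d)}_{0}$. Since $\gamma^{(A_{*})}(\theta_{1}) = 1$ by construction, $\theta_{1}$ also lies in $\Gamma^{(1d)}_{+}$, so the intersection is nonempty.

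For the converse, rather than reversing \thr{characterization 1}, I would build a superharmonic vector for $\ol K$ by hand. Given any $\theta^{*} \in \Gamma^{(1d)}_{+} \cap \Gamma^{(1d)}_{0}$, set $y_{n} = e^{n\theta^{*}}$ for $n \ge 0$. The boundary inequality becomes
\begin{align*}
  C_{0} y_{0} + A_{1} y_{1} = C_{0} + e^{\theta^{*}} A_{1} = \gamma^{(C_{*})}(\theta^{*}) \le 1 = y_{0},
\end{align*}
and, for $n \ge 1$, the interior inequality becomes
\begin{align*}
  A_{-1} y_{n-1} + A_{0} y_{n} + A_{1} y_{n+1} = e^{n\theta^{*}} \gamma^{(A_{*})}(\theta^{*}) \le e^{n\theta^{*}} = y_{n}.
\end{align*}
Thus $\vc{y} = (1, e^{\theta^{*}}, e^{2\theta^{*}}, \ldots)^{\rs{t}}$ is superharmonic for $\ol K$, \lem{super-h 2}(a) lifts it to a superharmonic vector for $K$, and \lem{super-h 1} then gives $c_{p}(K) \ge 1$.

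I do not anticipate a serious obstacle here; the only minor point to verify is that the left endpoint $\theta_{1}$ actually satisfies $\gamma^{(A_{*})}(\theta_{1}) = 1$ rather than a strict inequality, which follows from convexity of $\gamma^{(A_{*})}$ together with \eqn{phi infty}, ensuring that $\Gamma^{(1d)}_{+}$ is a closed interval whose left endpoint is attained.
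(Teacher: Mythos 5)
Your proof is correct. The ``only if'' direction follows the same path the paper implicitly takes: pass from $K$ to $\ol K$ via \lem{super-h 1} and \lem{super-h 2}(a), invoke \thr{characterization 1}(b), and then substitute the scalar identity $G_{-} = e^{\theta_{1}}$ established in the paragraph preceding the corollary, so that \eqn{superH NS 1} collapses to $\gamma^{(C_{*})}(\theta_{1}) \le 1$. Your ``minor point'' at the end is the right thing to worry about: $\Gamma^{(1d)}_{+}$ is a closed interval $[\theta_{\min},\theta_{\max}]$ by convexity and \eqn{phi infty}, and $\gamma^{(A_{*})}$ equals $1$ at the endpoints, so $\theta_{1}=\theta_{\min}$ is well defined and places $\theta_{1}$ in $\Gamma^{(1d)}_{+}$ as well.

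For the converse you take a genuinely shorter route than the paper's implicit one. The paper would apply \thr{characterization 1}(b) in reverse, which requires passing from ``some $\theta^{*} \in \Gamma^{(1d)}_{+}\cap\Gamma^{(1d)}_{0}$'' to ``the specific left endpoint $\theta_{1}$ satisfies $\gamma^{(C_{*})}(\theta_{1}) \le 1$''; that step uses the monotonicity of $\theta \mapsto \gamma^{(C_{*})}(\theta) = C_{0} + e^{\theta}A_{1}$ together with $\theta_{1} \le \theta^{*}$, a detail the paper leaves unspoken. You instead build the geometric vector $y_{n} = e^{n\theta^{*}}$ directly and verify \eqn{oy 0} and \eqn{oy +} by hand, then lift with \lem{super-h 2}(a) and \lem{super-h 1}. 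This is precisely \thr{super-h 2}(a) specialized to $m=1$ (where $\Gamma^{(1d)}_{0+} = \Gamma^{(1d)}_{0}\cap\Gamma^{(1d)}_{+}$, as the paper notes), and it sidesteps the monotonicity argument entirely. Both routes are valid; yours has the virtue of making the sufficiency direction self-contained and not reliant on the nontrivial necessity machinery of \thr{characterization 1}.
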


  This corollary is essentially the same as Theorem 3.1 of \cite{Miya2009}, so nothing is new technically. Here, we have an alternative proof. However, it is notable that $K$ may have boundary blocks whose size is $m_{0} \ge 1$ while $m=1$. 

For $m \ge 2$, \thr{characterization 1} is not so useful in application because it is hard to evaluate $G_{-}$ and therefore it is hard to verify \eqn{superH NS 1}. \citet{Ozaw2013} proposes to compute the corresponding characteristics numerically. However, in its application for the 2d-QBD process, $G_{-}$ is parametrized, and we need to compute it for some range of parameters. Thus, even numerical computations are intractable.

One may wonder how to replace \eqn{superH NS 1} by a tractable condition. In the view of the case of $m=1$, one possible condition is that $\gamma^{(C_{*})}(\theta) \le 1$ for some $\theta \in \Gamma^{(1d)}_{+}$, which is equivalent to \eqn{superH NS 2} for general $m$. However, $\gamma_{\rs{pf}}\left(C_{0} + e^{\theta_{1}} A_{1}\right)$, which equals $\gamma^{(C_{*})}(\theta)$, is generally not identical with $\gamma_{\rs{pf}}\left(C_{0} + A_{1} G_{-} \right)$ (see \app{counter example}). So far, we will not pursue the use of \thr{characterization 1}.

\subsection{A tractable condition for application}
\label{sect:tractable condition}

We have considered conditions for $c_{p}(K) \ge 1$, equivalently, $c_{p}(\ol{K}) \ge 1$. For this problem, we here consider a specific superharmonic vector for $\ol{K}$. For each $\theta \in \dd{R}$ and $\vc{h} \ge \vc{0}$, define $\vc{y}(\theta) \equiv (\vc{y}_{0}(\theta), \vc{y}_{1}(\theta), \ldots)^{\rs{t}}$ by
\begin{align}
\label{eqn:y n}
  \vc{y}_{n}(\theta) = e^{\theta n}  \vc{h}, \qquad n \ge 0.
\end{align}
Then, $\ol{K} \vc{y}(\theta) \le \vc{y}(\theta)$ holds if and only if
\begin{align}
\label{eqn:A 1}
 & A_{*}(\theta) \vc{h} \le \vc{h},\\
\label{eqn:C 1}
 & C_{*}(\theta) \vc{h} \le \vc{h}.
\end{align}
These conditions hold for $\vc{y}(\theta)$ of \eqn{y n}, so we only know that they are sufficient but may not be necessary. To fill this gap, we go back to $K$ and consider its superharmonic vector, using \eqn{y n} for off-boundary blocks. This suggests that we should replace \eqn{C 1} by the following assumption.
\begin{assumption} {\rm
\label{ass:key assumption 1}
  For each $\theta \in \Gamma^{(1d)}_{+}$, there is an $m_{0}$-dimensional positive vector $\vc{h}^{(0)}(\theta)$ and real numbers $c_{0}(\theta), c_{1}(\theta)$ such that either one of $c_{0}(\theta)$ or $c_{1}(\theta)$ equals one, and
\begin{align}
\label{eqn:key assumption 1a}
 & B_{0} \vc{h}^{(0)}(\theta) + e^{\theta} B_{1} \vc{h}^{(A_{*})}(\theta) = c_{0}(\theta) \vc{h}^{(0)}(\theta),\\
\label{eqn:key assumption 1b}
 & e^{-\theta} B_{-1} \vc{h}^{(0)} (\theta)+ A_{0} \vc{h}^{(A_{*})}(\theta) + e^{\theta} A_{1} \vc{h}^{(A_{*})}(\theta) = c_{1}(\theta) \vc{h}^{(A_{*})}(\theta). \qquad
\end{align}
}\end{assumption}

\begin{remark} {\rm
\label{rem:key assumption 1}
If $c_{0}(\theta) \le 1$ and $c_{1}(\theta) \le 1$, then \eqn{C 1} is equivalent to \eqn{key assumption 1a} and \eqn{key assumption 1b}. However, it is unclear whether or not $c_{0}(\theta) = 1$ or $c_{1}(\theta) = 1$ implies \eqn{C 1}. In particular, $c_{1}(\theta) = 1$ is the case that we need in our application to the generalized Jackson network. This will be affirmatively answered in \thr{super-h 2}.
}\end{remark}

Let
\begin{align*}
 & \Gamma^{(1d)}_{0+} = \{\theta \in \dd{R}; \exists \vc{h} > \vc{0}, A_{*}(\theta) \vc{h} \le \vc{h}, C_{*}(\theta) \vc{h} \le \vc{h} \} ,\\
 & \Gamma^{(1e)}_{0+} = \{\theta \in \dd{R}; \exists \vc{h} > \vc{0}, A_{*}(\theta) \vc{h} = \vc{h}, C_{*}(\theta) \vc{h} \le \vc{h} \} .
\end{align*}
If $\Gamma^{(1d)}_{0+} \not= \emptyset$, $C_{0}$ is finite, and therefore $c_{p}(B_{0}) > 1$. $\Gamma^{(1e)}_{0+}$ is at most a two point set. Note that $\Gamma^{(1d)}_{0+} \subset \Gamma^{(1d)}_{0} \cap \Gamma^{(1d)}_{+}$, but $\Gamma^{(1d)}_{0+} = \Gamma^{(1d)}_{0} \cap \Gamma^{(1d)}_{+}$ may not be true except for $m=1$. We further note the following facts.

\begin{lemma}
\label{lem:Gamma convex 1}
  If $\Gamma^{(1d)}_{0+} \not= \emptyset$, then $\Gamma^{(1d)}_{0+}$ is a bounded convex subset of $\dd{R}$, and it can be written as the closed interval $[\theta^{(A,C)}_{\min}, \theta^{(A,C)}_{\max}]$, respectively, where
\begin{align}
\label{eqn:theta min-max}
  \theta^{(A,C)}_{\min} = \inf \{ \theta \in \Gamma^{(1d)}_{0+} \}, \qquad \theta^{(A,C)}_{\max} = \sup \{ \theta \in \Gamma^{(1d)}_{0+} \}.
\end{align}
\end{lemma}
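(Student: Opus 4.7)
The plan is to verify that $\Gamma^{(1d)}_{0+}$ is bounded, convex, and closed in $\dd{R}$, which then forces it to be a closed bounded interval with endpoints given by \eqn{theta min-max}. Boundedness is essentially free: any $\theta \in \Gamma^{(1d)}_{0+}$ admits a positive $\vc{h}$ with $A_{*}(\theta) \vc{h} \le \vc{h}$, which via \eqn{cp harmonic 1} and irreducibility of $A_{*}(\theta)$ forces $\gamma^{(A_{*})}(\theta) \le 1$; hence $\Gamma^{(1d)}_{0+} \subset \Gamma^{(1d)}_{+}$, and the latter is bounded by \eqn{phi infty}.

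For convexity, my plan is to construct a witness for an interpolated parameter as the entrywise geometric mean of the two given witnesses, using H\"older's inequality twice. Concretely, given $\theta_{1}, \theta_{2} \in \Gamma^{(1d)}_{0+}$ with respective witnesses $\vc{h}^{(1)}, \vc{h}^{(2)} > \vc{0}$, $\alpha \in (0,1)$, and $\theta = \alpha \theta_{1} + (1-\alpha) \theta_{2}$, set $h_{k} = (h^{(1)}_{k})^{\alpha} (h^{(2)}_{k})^{1-\alpha}$. Since each entry of $A_{*}$ is a finite sum of positive exponentials in $\theta$ with nonnegative coefficients, H\"older applied to that sum gives $[A_{*}(\theta)]_{kl} \le [A_{*}(\theta_{1})]_{kl}^{\alpha} [A_{*}(\theta_{2})]_{kl}^{1-\alpha}$; a second application of H\"older to $\sum_{l}[A_{*}(\theta)]_{kl} h_{l}$ then yields
\begin{align*}
  [A_{*}(\theta) \vc{h}]_{k} \le \bigl([A_{*}(\theta_{1}) \vc{h}^{(1)}]_{k}\bigr)^{\alpha} \bigl([A_{*}(\theta_{2}) \vc{h}^{(2)}]_{k}\bigr)^{1-\alpha} \le (h^{(1)}_{k})^{\alpha} (h^{(2)}_{k})^{1-\alpha} = h_{k}.
\end{align*}
The identical argument applies to $C_{*}(\theta) = C_{0} + e^{\theta} C_{1}$ (the only two exponents being $0$ and $\theta$), so $\vc{h}$ is a joint witness for $\theta$ and convexity follows.

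For closedness, I would take $\theta_{n} \to \theta$ with $\theta_{n} \in \Gamma^{(1d)}_{0+}$ and witnesses $\vc{h}_{n} > \vc{0}$, normalize so that $\max_{k} [\vc{h}_{n}]_{k} = 1$, extract a convergent subsequence $\vc{h}_{n} \to \vc{h}$ by compactness in $\dd{R}^{m}$, and pass to the limit to obtain $A_{*}(\theta) \vc{h} \le \vc{h}$ and $C_{*}(\theta) \vc{h} \le \vc{h}$ with $\vc{h} \ge \vc{0}$, $\vc{h} \ne \vc{0}$. Positivity of $\vc{h}$ then follows from irreducibility of $A_{*}(\theta)$ (whose zero pattern is independent of $\theta$ and hence inherited from (\sect{nonnegative}b)): the support of any nonnegative subinvariant vector of an irreducible nonnegative matrix must be $A_{*}(\theta)$-invariant, so irreducibility forces it to be the entire index set. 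The hard part, as I see it, is precisely the joint-witness requirement --- the same $\vc{h}$ must serve both inequalities --- which is what prevents $\Gamma^{(1d)}_{0+}$ from coinciding with $\Gamma^{(1d)}_{+} \cap \Gamma^{(1d)}_{0}$ in general. The geometric-mean device circumvents this by producing a single witness from the two given ones without breaking either inequality.
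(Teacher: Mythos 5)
Your proof is correct and follows essentially the same route as the paper's: boundedness via the inclusion $\Gamma^{(1d)}_{0+} \subset \Gamma^{(1d)}_{+}$, convexity via the entrywise geometric mean of the two witnesses together with H\"older's inequality (the paper points to Lemma 3.7 of \cite{MiyaZwar2012} for exactly this device), and closedness via normalizing the witnesses, extracting a convergent subsequence, and invoking irreducibility of $A_{*}(\theta)$ to upgrade the nonnegative limit to a positive one. The only cosmetic differences are your normalization $\max_k [\vc{h}_n]_k = 1$ versus the paper's $\vc{h}_n^{\rs{t}}\vc{1} = 1$, and that you argue closedness for an arbitrary limit point while the paper does so only at the two endpoints, which suffices given convexity.
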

    
  We prove this lemma in \app{Gamma convex 1} because it is just technical. Based on these observations, we claim the following fact.
\begin{theorem}
\label{thr:super-h 2}
For a nonnegative matrix $K$ with QBD block structure, assume Conditions (\sect{nonnegative}a) and (\sect{nonnegative}b). (a) If $\Gamma^{(1d)}_{0+} \ne \emptyset$, then $c_{p}(K) \ge 1$. (b) Under \ass{key assumption 1}, $c_{p}(K) \ge 1$, if and only if $\Gamma^{(1d)}_{0+} \ne \emptyset$, which can be replaced by $\Gamma^{(1e)}_{0+} \ne \emptyset$.
\end{theorem}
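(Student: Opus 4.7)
For part (a), the proof is by direct construction. Given $\theta \in \Gamma^{(1d)}_{0+}$ with a positive witness $\vc{h}$, I set $\vc{y}_n := e^{\theta n}\vc{h}$ for $n \ge 0$. The row-zero block of $\ol{K}\vc{y} \le \vc{y}$ in \eqn{oy 0} reduces to $C_*(\theta)\vc{h} \le \vc{h}$, while every subsequent row in \eqn{oy +} reduces to $A_*(\theta)\vc{h} \le \vc{h}$; both hold by the definition of $\Gamma^{(1d)}_{0+}$. Then \lem{super-h 2}(a) lifts $\vc{y}$ to a superharmonic vector of $K$, and \lem{super-h 1} yields $c_p(K) \ge 1$. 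Since $\Gamma^{(1e)}_{0+} \subseteq \Gamma^{(1d)}_{0+}$ by definition, the ``if'' direction of (b) is subsumed.

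The heart of the argument is the converse in (b). Assume $c_p(K) \ge 1$ under \ass{key assumption 1}; I aim to show $\Gamma^{(1e)}_{0+} \ne \emptyset$. By \lem{Gamma+}, $\Gamma^{(1d)}_+ \ne \emptyset$, so I pick the smaller root $\theta_1 := \arg_\theta \min\{\theta \in \dd{R}; \gamma^{(A_*)}(\theta) = 1\}$. By \thr{characterization 1}, $\gamma_{\rs{pf}}(C_0 + A_1 G_-) \le 1$, and the stochastization argument at the end of its proof yields $G_-\vc{h}^{(A_*)}(\theta_1) = e^{\theta_1}\vc{h}^{(A_*)}(\theta_1)$, whence the key identity $(C_0 + A_1 G_-)\vc{h}^{(A_*)}(\theta_1) = C_*(\theta_1)\vc{h}^{(A_*)}(\theta_1)$. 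Applying \ass{key assumption 1} at $\theta_1$ now splits into two cases. If $c_0(\theta_1)=1$, then \eqn{key assumption 1a} is solvable (using $c_p(B_0) > 1$ from the proof of \lem{super-h 2}) as $\vc{h}^{(0)}(\theta_1) = (I-B_0)^{-1}e^{\theta_1}B_1\vc{h}^{(A_*)}(\theta_1)$; substituting into \eqn{key assumption 1b} gives $C_*(\theta_1)\vc{h}^{(A_*)}(\theta_1) = c_1(\theta_1)\vc{h}^{(A_*)}(\theta_1)$. Because $\vc{h}^{(A_*)}(\theta_1) > \vc{0}$ is then a positive right eigenvector of $C_0 + A_1 G_-$ with eigenvalue $c_1(\theta_1)$, Perron--Frobenius forces $c_1(\theta_1) = \gamma_{\rs{pf}}(C_0 + A_1 G_-) \le 1$, and so $\theta_1 \in \Gamma^{(1e)}_{0+}$.

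The delicate case is $c_1(\theta_1) = 1$, which is precisely the one needed for the generalized Jackson network (cf.\ \rem{key assumption 1}); here $\vc{h}^{(A_*)}(\theta_1)$ is no longer automatically an eigenvector of $C_*(\theta_1)$, so the direct link above breaks. My plan is to combine \eqn{key assumption 1b} with $A_*(\theta_1)\vc{h}^{(A_*)}(\theta_1) = \vc{h}^{(A_*)}(\theta_1)$ to extract the useful identity $B_{-1}\vc{h}^{(0)}(\theta_1) = A_{-1}\vc{h}^{(A_*)}(\theta_1)$, then express $\vc{h}^{(0)}(\theta_1) = (c_0(\theta_1)I - B_0)^{-1} e^{\theta_1}B_1\vc{h}^{(A_*)}(\theta_1)$ from \eqn{key assumption 1a}, and compare $C_*(\theta_1)\vc{h}^{(A_*)}(\theta_1)$ to $\vc{h}^{(A_*)}(\theta_1)$ through the resolvent monotonicity $(cI - B_0)^{-1} \ge (I - B_0)^{-1}$, valid for $c \in (\gamma_{\rs{pf}}(B_0), 1]$. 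This reduces the desired inequality to showing $c_0(\theta_1) \le 1$, which is the genuine obstacle; I would attack it by a Collatz--Wielandt argument applied to $(C_0 + A_1 G_-)\vc{h}^{(A_*)}(\theta_1) = C_*(\theta_1)\vc{h}^{(A_*)}(\theta_1)$, using $\gamma_{\rs{pf}}(C_0 + A_1 G_-) \le 1$ to rule out $c_0(\theta_1) > 1$ (since otherwise the right-hand side strictly dominates $\vc{h}^{(A_*)}(\theta_1)$ componentwise, forcing $\gamma_{\rs{pf}}(C_0 + A_1 G_-) > 1$), and if needed by repeating the analysis at the larger root $\theta_2$ of $\gamma^{(A_*)} = 1$ to show that at least one of $\{\theta_1,\theta_2\}$ must lie in $\Gamma^{(1e)}_{0+}$.
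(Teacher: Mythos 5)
Your part (a) and the ``if'' direction of (b) coincide with the paper's. The divergence is in the necessity direction of (b), where you take a genuinely different route and where there is a gap you have already half-identified. The paper's proof does not invoke \thr{characterization 1} or the matrix $G_-$ at all: it works directly with $K$ and \ass{key assumption 1}, scaling the blocks of $K$ by $\vc{h}^{(0)}(\theta_1)$, $\vc{h}^{(A_*)}(\theta_1)$, $e^{\theta_1 n}$ and the constants $c_0(\theta_1),c_1(\theta_1)$ to produce a genuine transition matrix $\breve{P}^{(\theta_1)}$, so that $K\vc{y}\le\vc{y}$ becomes \eqn{breve P}, a weighted inequality for $\breve{P}^{(\theta_1)}$ with diagonal weights $(c_0 I_0, c_1 I, I, I,\ldots)$. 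Since $\theta_1$ is the left root of $\gamma^{(A_*)}=1$, the level drift of $\breve{P}^{(\theta_1)}$ is $\le 0$, so the chain is recurrent and the return-to-level-$0$ matrix $\breve{P}^{(\theta_1)}_{00}+\breve{P}^{(\theta_1)}_{01}U_{11}\breve{P}^{(\theta_1)}_{10}$ is stochastic; a block elimination then turns \eqn{breve P} (under $c_1=1$) into $c_0(\theta_1)\cdot(\text{stochastic})\,\breve{\vc{y}}_0\le\breve{\vc{y}}_0$, forcing $c_0(\theta_1)\le 1$ with no irreducibility or componentwise-positivity hypotheses. The $c_0=1$ case is handled symmetrically. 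This is more robust than your route precisely because it never touches $C_0+A_1 G_-$ or the rows of $B_{-1}$.

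Your $c_0(\theta_1)=1$ branch is fully correct and in fact nicer than what the paper states: at the left root $\widehat{G}^{(\theta_1)}_-$ is stochastic, so $G_-\vc{h}^{(A_*)}(\theta_1)=e^{\theta_1}\vc{h}^{(A_*)}(\theta_1)$ and the eigen-identity turns $\vc{h}^{(A_*)}(\theta_1)$ into a positive right eigenvector of $C_0+A_1 G_-$, so \eqn{superH NS 1} caps $c_1$. But the $c_1(\theta_1)=1$ branch has a real gap. Ruling out $c_0(\theta_1)>1$ by ``strict componentwise domination'' requires every component of $(c_0-1)\,B_{-1}(I-B_0)^{-1}(c_0 I-B_0)^{-1}B_1\vc{h}^{(A_*)}(\theta_1)$ to be strictly positive, which needs $B_{-1}\vc{1}>\vc{0}$ --- not a consequence of irreducibility of $K$, since a state at level 1 may have to wander within level 1 before dropping to level 0. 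Without that you only get $(C_0+A_1 G_-)\vc{h}^{(A_*)}(\theta_1)\ge\vc{h}^{(A_*)}(\theta_1)$ with partial equality, which is compatible with $\gamma_{\rs{pf}}(C_0+A_1 G_-)=1$ when $C_0+A_1 G_-$ is reducible (the paper itself notes $C_*(\theta)$ need not be irreducible). Your fallback at the larger root fails outright: there the twisted drift is $\ge 0$, $\widehat{G}^{(\theta)}_-$ is no longer stochastic, and the identity $G_-\vc{h}^{(A_*)}(\theta)=e^{\theta}\vc{h}^{(A_*)}(\theta)$ breaks. For $c_1(\theta_1)=1$ you should switch to the paper's stochastization argument, which sidesteps all of these issues.
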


\begin{proof}
(a) We already know that $\vc{y}(\theta)$ of \eqn{y n} is a superharmonic vector of $\ol{K}$ for $\theta \in \Gamma^{(1d)}_{0+}$. Thus, \lem{super-h 2} implies (a).

(b) The sufficiency of $\Gamma^{(1d)}_{0+} \ne \emptyset$ is already proved in (a). To prove its necessity, we first note that $\Gamma^{(1e)}_{+}$ is not empty by \lem{Gamma+}. Hence, there is a $\theta_{1}$ such that $\theta_{1} = \min\{\theta \in \dd{R}; \gamma^{(A_{*})}(\theta) = 1\}$. For this $\theta_{1}$, we show that \eqn{C 1} holds for $\vc{h} = \vc{h}^{(A_{*})}(\theta_{1})$. To facilitate \ass{key assumption 1}, we work on $K$ rather than $\ol{K}$. Assume that a superharmonic $\vc{y} \equiv (\vc{y}_{0}, \vc{y}_{1}, \ldots)$ exists for $K$. We define the transition probability matrix $\breve{P}^{(\theta_{1})} \equiv \{\breve{P}^{(\theta_{1})}_{k\ell}; k, \ell \ge 0\}$ by
\begin{align*}
 & \breve{P}^{(\theta_{1})}_{00} = \Delta_{c_{0}(\theta_{1}) \vc{h}^{(0)}(\theta_{1})}^{-1} B_{0} \Delta_{\vc{h}^{(0)}(\theta_{1})}, \qquad \breve{P}^{(\theta_{1})}_{01} = e^{\theta_{1}} \Delta_{c_{0}(\theta_{1}) \vc{h}^{(0)}(\theta_{1})}^{-1} B_{1} \Delta_{\vc{h}^{(A_{*})}(\theta_{1})} ,\\
 & \breve{P}^{(\theta_{1})}_{10} = e^{-\theta_{1}} \Delta_{c_{1}(\theta_{1}) \vc{h}^{(A_{*})}(\theta_{1})}^{-1} B_{-1} \Delta_{\vc{h}^{(0)}(\theta_{1})}, \qquad \breve{P}^{(\theta_{1})}_{1 \ell} = c_{1}(\theta_{1})^{-1} \widehat{A}_{\ell-1}^{(\theta_{1})}, \quad \ell = 1,2 ,\\ 
 & \breve{P}^{(\theta_{1})}_{k \ell} = \widehat{A}_{\ell-k}^{(\theta_{1})}, \quad k \ge 2, |\ell - k| \le 1, 
\end{align*}
where $\breve{P}^{(\theta_{1})}_{k \ell}$ is the null matrix for $(k,\ell)$ undefined. It is easy to see that $\breve{P}^{(\theta_{1})}$ is a proper transition matrix with QBD structure by \eqn{key assumption 1a}, \eqn{key assumption 1b} and $\gamma^{(A_{*})}(\theta_{1}) = 1$. Furthermore, as shown in \app{Gamma+}, this random walk has the mean drift \eqn{drift 1} with $\theta_{1}$ instead of $\theta_{0}$. Since the definition of $\theta_{1}$ implies that $(\gamma^{\rt{A}})'(\theta_{1}) = 0$, this Markov chain is null recurrent.

We next define $\breve{\vc{y}}^{(\theta_{1})}$ as
\begin{align*}
  \breve{\vc{y}}^{(\theta_{1})}_{0} = \Delta_{\vc{h}^{(0)}(\theta_{1})}^{-1} \vc{y}_{0}, \qquad \breve{\vc{y}}^{(\theta_{1})}_{n} = e^{-\theta n} \Delta_{\vc{h}^{(A_{*})}(\theta_{1})}^{-1} \vc{y}_{n}, \qquad n \ge 1,
\end{align*}
then the 0-th row block of $ \breve{P}^{(\theta_{1})} \breve{\vc{y}}^{(\theta_{1})}$ is
\begin{align*}
  \breve{P}^{(\theta_{1})}_{00} \breve{\vc{y}}^{(\theta_{1})}_{0} + \breve{P}^{(\theta_{1})}_{01} \breve{\vc{y}}^{(\theta_{1})}_{1}  = c_{0}(\theta_{1})^{-1} \Delta_{\vc{h}^{(0)}(\theta_{1})}^{-1} \vc{y}_{0} = c_{0}(\theta_{1})^{-1} \breve{\vc{y}}^{(\theta_{1})}_{0},
\end{align*}
and, similarly,  the 1-st row block is
\begin{align*}
    \breve{P}^{(\theta_{1})}_{10} \breve{\vc{y}}^{(\theta_{1})}_{0} + \breve{P}^{(\theta_{1})}_{11} \breve{\vc{y}}^{(\theta_{1})}_{1} + \breve{P}^{(\theta_{1})}_{12} \breve{\vc{y}}^{(\theta_{1})}_{2} = c_{1}(\theta_{1})^{-1} \breve{\vc{y}}^{(\theta_{1})}_{1}.
\end{align*}
Hence, $K \vc{y} \le \vc{y}$ is equivalent to
\begin{align}
\label{eqn:breve P}
  \left(\begin{array}{ccccc} 
  c_{0}(\theta_{1}) I_{0} & 0 & 0 & 0 & \hdots \\
  0 & c_{1}(\theta_{1}) I & 0 & 0 & \hdots\\
  0 & 0 & I & 0 & \hdots\\
  0 & 0 & 0 & I & \ddots \\
  \vdots & \ddots & \ddots & \ddots & \ddots
  \end{array}\right) \breve{P}^{(\theta_{1})} \breve{\vc{y}}^{(\theta_{1})} \le \breve{\vc{y}}^{(\theta_{1})},
\end{align}
where $I_{0}$ is the $m_{0}$ dimensional identity matrix. We now prove that $c_{0}(\theta_{1}) \le 1$ and $c_{1}(\theta_{1}) \le 1$ using the assumption that either $c_{0}(\theta_{1}) = 1$ or $c_{1}(\theta_{1}) = 1$. First, we assume that $c_{1}(\theta_{1}) = 1$, and rewrite \eqn{breve P} as
\begin{align*}
   \left(\begin{array}{ccccc} 
  c_{0}(\theta_{1})  \breve{P}^{(\theta_{1})}_{00} & c_{0}(\theta_{1})  \breve{P}^{(\theta_{1})}_{01} & 0 & \hdots \\
  \breve{P}^{(\theta_{1})}_{10} & 0 & 0 & \hdots\\
  0 & 0 & 0 & \ddots \\
  \vdots & \ddots & \ddots  & \ddots
  \end{array}\right) 
  \breve{\vc{y}}^{(\theta_{1})} \le 
  \left(\begin{array}{ccccc} 
  I_{0} & 0 & 0 &\hdots \\
  0 \\
  0 &  & I_{+} - \breve{P}^{(\theta_{1})}_{+} \\
  \vdots 
  \end{array}\right) \breve{\vc{y}}^{(\theta_{1})},
\end{align*}
where $I_{+}$ and $\breve{P}^{(\theta_{1})}_{+}$ are the matrices obtained from $I$ and $\breve{P}^{(\theta_{1})}$, respectively, by deleting their first row and column blocks. Since $\breve{P}^{(\theta_{1})}_{+}$ is strictly substochastic, $I_{+} - \breve{P}^{(\theta_{1})}_{+}$ is invertible. We denote its inversion by $U$, then 
\begin{align*}
  \left(\begin{array}{ccccc} 
  I_{0} & 0 & 0 &\hdots \\
  0 \\
  0 &  & U \\
  \vdots 
  \end{array}\right)
   \left(\begin{array}{ccccc} 
  c_{0}(\theta_{1})  \breve{P}^{(\theta_{1})}_{00} & c_{0}(\theta_{1})  \breve{P}^{(\theta_{1})}_{01} & 0 & \hdots \\
  \breve{P}^{(\theta_{1})}_{10} & 0 & 0 & \hdots\\
  0 & 0 & 0 & \ddots \\
  \vdots & \ddots & \ddots  & \ddots
  \end{array}\right) 
  \breve{\vc{y}}^{(\theta_{1})} \le \breve{\vc{y}}^{(\theta_{1})},
\end{align*}
which yields that
\begin{align*}
  c_{0}(\theta_{1}) \left( \breve{P}^{(\theta_{1})}_{00} + \breve{P}^{(\theta_{1})}_{01} U_{11} \breve{P}^{(\theta_{1})}_{10} \right) \breve{\vc{y}}^{(\theta_{1})}_{0} \le \breve{\vc{y}}^{(\theta_{1})}_{0},
\end{align*}
where $U_{11}$ is $(1,1)$ block of $U$. Since $\left( \breve{P}^{(\theta_{1})}_{00} + \breve{P}^{(\theta_{1})}_{01} U_{11} \breve{P}^{(\theta_{1})}_{10} \right)$ is a stochastic matrix by the null recurrence of $\breve{P}^{(\theta_{1})}$, we must have that $c_{0}(\theta_{1}) \le 1$. The case for $c_{0}(\theta_{1}) = 1$ is similarly proved. Thus, the proof is completed in the view of \rem{key assumption 1}. 
\end{proof}

\subsection{The convergence parameter and $u$-invariant measure}
\label{sect:existence t-invriant}

We now turn to consider the invariant measure of $K$, which will be used in our application. \citet{LiZhao2003} have shown the existence of such invariant measures for $uK$ for $u > 0$ when $K$ is substochastic. We will show that their results are easily adapted for a nonnegative matrix. For this, we first classify a nonnegative irreducible matrix $T$ to be transient, null recurrent or positive recurrent. $T$ is said to be $u$-transient if
\begin{align*}
  \sum_{n=0}^{\infty} u^{n} T^{n} < \infty,
\end{align*}
while it is said to be $u$-recurrent if this sum diverges. For $u$-recurrent $T$, there always exists a $u$-invariant measure, and $T$ is said to be $u$-positive if the $u$-invariant measure has a finite total sum. Otherwise, it is said to be $u$-null. The book of \citet{Sene1981} is a standard reference for these classifications.

Suppose that $c_{p}(K) \ge 1$. We modify $K$ to be substochastic. For this, recall that $c_{p}(K) \ge 1$ is equivalent to the existence of a superharmonic vector of $K$, and that $\Delta_{\vc{a}}$ is the diagonal matrix whose diagonal entry is given by vector $\vc{a}$. Define $\widehat{K}$ for a superharmonic vector $\vc{y}$ of $K$ by
\begin{align*}
  \widehat{K} = \Delta_{\vc{y}}^{-1} K \Delta_{\vc{y}}.
\end{align*}
Then, $\widehat{K} \vc{1} \le \vc{1}$, that is, $\widehat{K}$ is substochastic. It is also easy to see that, for $0 < u \le c_{p}(K)$, $\widehat{\vc{x}}$ is a $u$ invariant measure of $\widehat{K}$ if and only if $\widehat{\vc{x}} \Delta_{\vc{y}}^{-1}$ is a $u$ invariant measure of $K$. Furthermore, the classifications for $K$ are equivalent to those for $\widehat{K}$. Thus, the results of \cite{LiZhao2003} can be stated in the following form.

\begin{lemma}[Theorem A of \citet{LiZhao2003}]
\label{lem:t-invariant m 1}
  For a nonnegative irreducible matrix $K$ with QBD block structure, let $t = c_{p}(K)$, $t_{+} = c_{p}(K_{+})$ and assume that $t \ge 1$. Then, $K$ is classified into either one of the following cases: (a) $t$-positive if $t < t_{+}$, (b) $t$-null or $t$-transient if $t = t_{+}$.
\end{lemma}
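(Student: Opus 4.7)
The plan is to reduce this nonnegative case to the substochastic case settled by Li and Zhao, using the diagonal similarity set up in the paragraph just before the statement. Since $t = c_{p}(K) \ge 1$, \lem{super-h 1} supplies a positive superharmonic vector $\vc{y}$, which we partition conformally with the QBD blocks as $\vc{y} = (\vc{y}_{0}, \vc{y}_{1}, \ldots)^{\rs{t}}$. Setting $\vc{y}_{+} = (\vc{y}_{1}, \vc{y}_{2}, \ldots)^{\rs{t}}$ and combining \eqn{y 1} with \eqn{y +} gives $K_{+} \vc{y}_{+} \le \vc{y}_{+}$, so both $\widehat{K} = \Delta_{\vc{y}}^{-1} K \Delta_{\vc{y}}$ and $\widehat{K}_{+} = \Delta_{\vc{y}_{+}}^{-1} K_{+} \Delta_{\vc{y}_{+}}$ are irreducible and substochastic, with the same block partition as $K$ and $K_{+}$.

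Next I would verify that the quantities appearing in the conclusion are invariants of the similarity. From $\widehat{K}^{n} = \Delta_{\vc{y}}^{-1} K^{n} \Delta_{\vc{y}}$, entry-wise convergence of $\sum u^{n} \widehat{K}^{n}$ and $\sum u^{n} K^{n}$ coincide, giving $c_{p}(\widehat{K}) = c_{p}(K) = t$ and, analogously, $c_{p}(\widehat{K}_{+}) = c_{p}(K_{+}) = t_{+}$. The classifications at $u = t$ are also preserved, because $u^{n} [K^{n}]_{ij} = (y_{i}/y_{j})\, u^{n} [\widehat{K}^{n}]_{ij}$, so the standard characterizations---$u$-transient (summable series), $u$-positive (strictly positive limit of $u^{n} T^{n}$), $u$-null (zero limit with divergent sum)---hold simultaneously for $K$ and $\widehat{K}$.

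Theorem~A of \citet{LiZhao2003}, applied to the substochastic $\widehat{K}$, then yields that $\widehat{K}$ is $t$-positive when $t < t_{+}$ and $t$-null or $t$-transient when $t = t_{+}$, and the similarity pulls this dichotomy back to $K$. The only real subtlety is that an arbitrary superharmonic vector $\vc{y}$ need not preserve the repeated-block pattern of the QBD structure under the similarity---the transformed off-diagonal block $\Delta_{\vc{y}_{n}}^{-1} A_{\ell} \Delta_{\vc{y}_{n+\ell}}$ depends on $n$ in general---so Li and Zhao's theorem as literally stated may not apply directly to $\widehat{K}$. Addressing this is the hardest point of the argument: the remedy is either to choose $\vc{y}$ of geometric form on the tail (which is possible, since $\Gamma^{(1d)}_{+} \ne \emptyset$ by \lem{Gamma+} whenever $t \ge 1$) so that $\widehat{K}_{+}$ retains the repeated pattern, or to inspect Li and Zhao's proof and confirm that what really drives the dichotomy is block tridiagonality of $\widehat{K}$ together with the two convergence parameters, not the exact repetition of blocks. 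Once this structural issue is dealt with, no further obstruction remains and the proof is complete.
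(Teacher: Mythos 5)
Your proof follows the paper's own (implicit) argument: the discussion immediately preceding the lemma sets up exactly the similarity $\widehat{K} = \Delta_{\vc{y}}^{-1} K \Delta_{\vc{y}}$, records that the convergence parameters and the $t$-positive/null/transient classification are invariant under the transform, and then invokes Li and Zhao's Theorem~A for the resulting substochastic matrix, which is precisely what you propose. You have correctly flagged a genuine structural subtlety---conjugating by a general superharmonic $\vc{y}$ produces level-dependent off-diagonal blocks $\Delta_{\vc{y}_{n}}^{-1} A_{\ell} \Delta_{\vc{y}_{n+\ell}}$ and thus destroys the repeated-block pattern that Li and Zhao's Theorem~A assumes---and the paper does not address this point either, so you have effectively identified a gap shared by the paper. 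Your first proposed remedy, however, does not quite close it as stated: $\Gamma^{(1d)}_{+} \ne \emptyset$ (from $t_{+} \ge t \ge 1$ and \lem{Gamma+}) only supplies a geometric superharmonic vector for $K_{+}$; to extend it to a geometric-tail superharmonic vector for the full $K$ you must also satisfy the boundary inequalities \eqn{y 0} and \eqn{y 1}, and that compatibility is exactly what $\Gamma^{(1d)}_{0+} \ne \emptyset$ governs in \thr{super-h 2}, which is not implied by $c_{p}(K) \ge 1$ alone without \ass{key assumption 1}. Your second remedy---verifying that Li and Zhao's argument really needs only the block-tridiagonal skip-free structure together with the two convergence parameters, rather than literal level-homogeneity---is the route that would actually complete the proof, but it requires engaging with their proof rather than quoting the theorem statement, a step neither you nor the paper carries out explicitly.
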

\begin{remark} {\rm
\label{rem:t-invariant m 1}
The $t$ and $t_{+}$ correspond to $\alpha$ and $\ol{\alpha}$ of \cite{LiZhao2003}, respectively. In Theorem A of \cite{LiZhao2003}, the case (b) is further classified to $t$-null and $t$-transient cases, but it requires the Perron-Frobenius eigenvalue of $t (C_{0} + R(t) A_{-1})$ to be less than 1 for $t$-transient and to equal 1 for $t$-null, where $R(t)$ is the minimal nonnegative solution $X$ of the matrix equation:
\begin{align*}
  X = t (X^{2} A_{-1} + X A_{0} + A_{1}).
\end{align*}
In general, this eigenvalue is hard to get in closed form, so we will not use this finer classification. Similar but slightly different results are obtained in Theorem 16 of \cite{BeanPollTayl2000}.
}\end{remark}

\begin{lemma}[Theorems B and C of \citet{LiZhao2003}]
\label{lem:t-invariant m 2}
  For $K$ satisfying the assumptions of \lem{t-invariant m 1}, there exist $u$-invariant measures for $0 < u \le t \equiv c_{p}(K)$. The form of these invariant measures varies according to three different types (a1) $u=t$ for $t$-recurrent, (a2) $u = t$ for $t$-transient, and (b) $u<t$.
\end{lemma}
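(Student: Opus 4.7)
The plan is to reduce to the substochastic case already handled by Li--Zhao and then transport the conclusion back via the similarity used just before the statement. Since we assume $t = c_p(K) \ge 1$, \lem{super-h 1} gives a positive superharmonic vector $\vc{y}$, and the paper has already observed that $\widehat{K} = \Delta_{\vc{y}}^{-1} K \Delta_{\vc{y}}$ is substochastic, that $\widehat{K}$ preserves the QBD block structure (since $\Delta_{\vc{y}}$ is block-diagonal with positive diagonal), and that $c_p(\widehat{K}) = c_p(K)$ together with $c_p(\widehat{K}_{+}) = c_p(K_{+})$, because conjugation by a positive diagonal matrix does not affect these convergence parameters. Consequently the hypotheses of \lem{t-invariant m 1} pass verbatim from $K$ to $\widehat{K}$, and the recurrence/transience classification is the same for both.

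Next I would invoke Theorems B and C of \cite{LiZhao2003} directly on $\widehat{K}$: they produce, for each $0 < u \le t$, a $u$-invariant row vector $\widehat{\vc{x}}$ of $\widehat{K}$, i.e.\ $u \widehat{\vc{x}} \widehat{K} = \widehat{\vc{x}}$, together with an explicit form depending on which of the three regimes (a1) $u = t$ with $t$-recurrence, (a2) $u = t$ with $t$-transience, or (b) $u < t$ one is in. These three cases are exactly the trichotomy stated in the lemma, so they carry over without change, the classification having been preserved by the similarity. Define
\begin{align*}
  \vc{x} = \widehat{\vc{x}} \Delta_{\vc{y}}^{-1}.
\end{align*}
Then $u \vc{x} K = u \widehat{\vc{x}} \Delta_{\vc{y}}^{-1} (\Delta_{\vc{y}} \widehat{K} \Delta_{\vc{y}}^{-1}) = u (\widehat{\vc{x}} \widehat{K}) \Delta_{\vc{y}}^{-1} = \widehat{\vc{x}} \Delta_{\vc{y}}^{-1} = \vc{x}$, so $\vc{x}$ is a $u$-invariant measure of $K$, and it is positive because $\widehat{\vc{x}}$ is and $\vc{y}$ is entrywise positive. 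The explicit forms described in \cite{LiZhao2003} translate into explicit forms for $\vc{x}$ by right-multiplying by $\Delta_{\vc{y}}^{-1}$, preserving the three-case decomposition.

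There is no substantive obstacle beyond bookkeeping: the only thing one must check is that all quantities appearing in the Li--Zhao formulas (the $R$ matrix, the $G_{-}$ matrix, the Perron--Frobenius eigenvalue of $t(C_{0} + R(t) A_{-1})$, etc.) transform covariantly under the diagonal similarity so that the regime in which $\widehat{K}$ sits for a given $u$ is the same as the regime of $K$. This is routine since $\Delta_{\vc{y}}$ is block-diagonal and positive, so conjugation commutes with the block-triangular and matrix-polynomial manipulations used to define these objects. The mild subtlety, worth flagging, is that the superharmonic vector $\vc{y}$ is only unique up to the usual freedoms, so the representative $\widehat{\vc{x}}$ one obtains is likewise only unique up to a scalar multiple in each of the regimes, which is fine because the lemma only asserts existence and the shape of the family, not uniqueness.
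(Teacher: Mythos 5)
Your proposal follows the same route the paper takes: conjugate $K$ to the substochastic $\widehat{K} = \Delta_{\vc{y}}^{-1} K \Delta_{\vc{y}}$, invoke Theorems B and C of Li and Zhao on $\widehat{K}$, and transport the $u$-invariant measure back by $\vc{x} = \widehat{\vc{x}}\Delta_{\vc{y}}^{-1}$; the computation $u\vc{x}K = \vc{x}$ and the remark that the recurrence classification transfers are precisely what the paper states just before the lemma.

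One added justification, however, is incorrect: you claim $\widehat{K}$ retains QBD block structure ``since $\Delta_{\vc{y}}$ is block-diagonal with positive diagonal.'' Conjugation by a block-diagonal positive matrix does preserve block tridiagonality, but the QBD structure also requires the off-boundary row blocks to repeat, and the $(n,n+j)$ block of $\widehat{K}$ for $n\ge 2$ is $\Delta_{\vc{y}_n}^{-1} A_j \Delta_{\vc{y}_{n+j}}$, which is level-independent only if the block entries of the superharmonic vector are geometric in $n$, say $\vc{y}_n = e^{\theta n}\vc{h}$ for $n\ge 1$. An arbitrary superharmonic vector need not have that form; for instance, the one produced in the sufficiency direction of \thr{characterization 1} is $(\vc{y}_0,\vc{z})^{\rs{t}}$ with $\vc{z}=(I-K_+)^{-1}(A_{-1}\vc{y}_0,\vc{0},\ldots)^{\rs{t}}$, which is generally not geometric. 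To make the reduction to Li and Zhao airtight one must either argue that a superharmonic vector geometric off the boundary exists whenever $c_p(K)\ge 1$ (which \thr{super-h 2} provides only under \ass{key assumption 1}), or check that the Li--Zhao arguments require substochasticity only of the repeating part. The paper's own pre-lemma remark glosses over this same point, so you have faithfully reproduced its reasoning, but the parenthetical explanation you added is wrong and should be struck or replaced by an explicit appeal to the geometric form of $\vc{y}$.
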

\begin{remark} {\rm
\label{rem:t-invariant m 2}
By \lem{t-invariant m 1}, $K$ is $t$-null for (a1) if and only if $t = t_{+}$.
}\end{remark}

\section{Application to a 2d-QBD process}
\setnewcounter
\label{sect:application}

In this section, we show how \thr{super-h 2} can be applied to a tail asymptotic problem. We here consider a 2d-QBD process $\{Z_{n}\} \equiv \{(L_{1n}, L_{2n}, J_{n})\}$ introduced by \citet{Ozaw2013}, where $\vc{L}_{n} \equiv (L_{1n}, L_{2n})$ is a random walk component taking values in $\dd{Z}_{+}^{2}$ and $\{J_{n}\}$ is a background process with finitely many states. It is assumed that $\{Z_{n}\}$ is a discrete time Markov chain. The tail decay rates of the stationary distribution of the 2d-QBD process have been studied in \cite{Ozaw2013}, but there remain some crucial problems unsolved as we argued in \sectn{Introduction} and will detail in the next subsection. Furthermore, there is some ambiguity in the definition of \citet{Ozaw2013}'s 2d-QBD process. Thus, we first reconsider this definition, and show that those problems on the tail asymptotics can be well studied using \ass{key assumption 1} and \thr{super-h 2}.

\subsection{Two dimensional QBD processes}
\label{sect:two}

We will largely change the notation of \cite{Ozaw2013} to make clear assumptions. We partition the state space $\sr{S}$ of $Z_{n}$ so as to apply \lem{super-h 2}. Divide the lattice quarter plane $\dd{Z}_{+}^{2}$ into four regions.
\begin{align*}
 & \sr{U}_{0} \equiv \{(0,0)\}, \quad \sr{U}_{1} \equiv \{(\ell,0) \in \dd{Z}_{+}^{2}; \ell \ge 1\}, \quad \sr{U}_{2} \equiv \{(0,\ell) \in \dd{Z}_{+}^{2}; \ell \ge 1\}, \\
 & \sr{U}_{+} \equiv \{(\ell,\ell') \in \dd{Z}_{+}^{2}; \ell, \ell' \ge 1\},
\end{align*}
where $\sr{U}_{i}$ for $i=0,1,2$ and $\sr{U}_{+}$ are said to be a boundary face and interior, respectively. Then, the state space $\sr{S}$ for $Z_{n}$ is given by
\begin{align*}
  \sr{S} = (\sr{U}_{0} \times \sr{V}_{0}) \cup (\sr{U}_{1} \times \sr{V}_{1}) \cup (\sr{U}_{2} \times \sr{V}_{2}) \cup (\sr{U}_{+} \times \sr{V}_{+}),
\end{align*}
where $\sr{V}_{i}$ are finite sets of numbers such that their cardinality $|\sr{V}_{i}|$ is given by $m_{0} = |\sr{V}_{0}|$, $m_{1} = |\sr{V}_{1}|$, $m_{2} = |\sr{V}_{2}|$, $m = |\sr{V}_{+}|$.

To define the transition probabilities of $Z_{n}$, we further partition the state space as
\begin{align*}
 & \sr{U}_{\ell m} = \{(\ell,m)\}, \qquad \ell, m \in \dd{H}_{+} \equiv \{0,1\},\\
 & \sr{U}_{+0} = \{(n,0) \in \dd{Z}_{+}^{2}; n \ge 2\}, \qquad \sr{U}_{0+} = \{(0,n) \in \dd{Z}_{+}^{2}; n \ge 2\},\\
 & \sr{U}_{+1} = \{(n,1) \in \dd{Z}_{+}^{2}; n \ge 2\}, \qquad \sr{U}_{1+} = \{(1,n) \in \dd{Z}_{+}^{2}; n \ge 2\},\\
 & \sr{U}_{++} = \{(\ell,m) \in \dd{Z}_{+}^{2}; \ell,m \ge 2\}.
\end{align*}
On each of those sets, the transition probabilities of $Z_{n}$ are assumed to be homogeneous. Namely, for $s, s' \in \sr{A} \equiv \{0,1,+\}$, their matrices for background state transitions can be denoted by $A^{(ss')}_{ij}$ for the transition from $((\ell,m),k) \in \sr{U}_{ss'}$ to $((\ell+i,m+j),k') \in \sr{S}$. Furthermore, we assume that
\begin{align}
\label{eqn:}
 & A^{(10)}_{ij} = A^{(+0)}_{ij}, \quad A^{(01)}_{ij} = A^{(0+)}_{ij}, \quad A^{(11)}_{ij} = A^{(++)}_{ij}, \quad i,j \in \dd{H}_{+},\\
 & A^{(+1)}_{ij} = A^{(++)}_{ij}, \quad i \in \dd{H}, j \in \dd{H}_{+}, \quad A^{(1+)}_{ij} = A^{(++)}_{ij}, \quad i \in \dd{H}_{+}, j \in \dd{H}.
\end{align}
Throughout the paper, we denote $A^{(++)}_{ij}$ by $A_{ij}$. This greatly simplifies the notation. See \fig{RRW1} for those partitions of the quarter plane $\dd{Z}_{+}^{2}$ and the transition probability matrices.
 
\begin{figure}[h]
 	\centering
	\includegraphics[height=7cm]{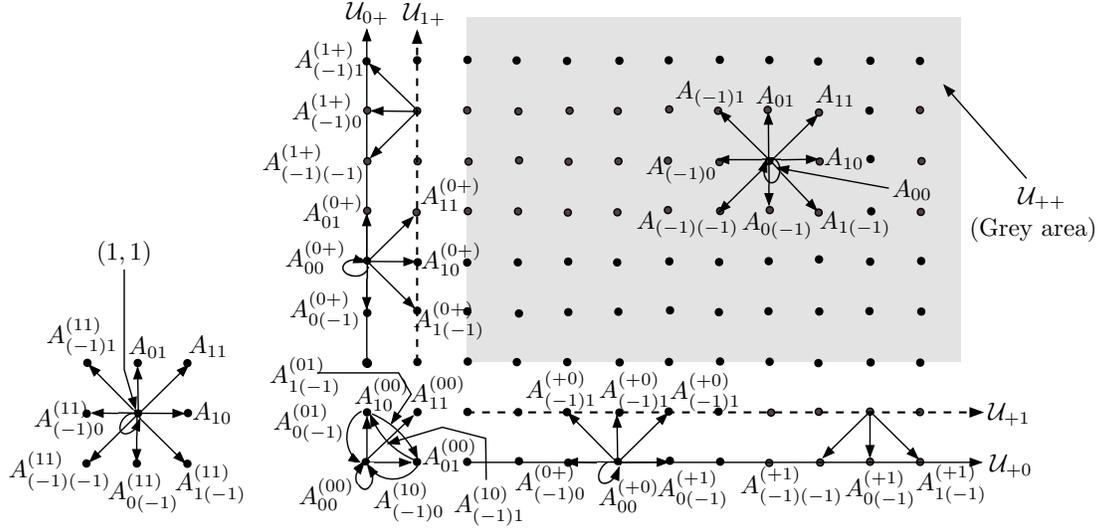}
	\caption{Regions $\sr{U}_{ss'}$ and transition probability matrices $A_{ij}$ and $A^{(ss')}_{ij}$}
	\label{fig:RRW1}
\end{figure}

Those assumptions on the transition probabilities are essentially the same as those introduced by \citet{Ozaw2013}, while there is some minor flexibility in our assumption that $A^{(11)}_{0(-1)}$ ($ A^{(11)}_{(-1)0}$) may be different from $A^{(+1)}_{0(-1)}$ ($A^{(1+)}_{(-1)0}$, respectively), which are identical in \cite{Ozaw2013}. Another difference is in that we have nine families of transition matrices while \citet{Ozaw2013} expresses them by four families, $A^{(s)}_{ij}$ for $s=0,1,2$ and $A_{ij}$. 

By the homogeneity and independence assumptions, we can define $Z_{n+1}$ in terms of $Z_{n}$ and independent increments as
\begin{align}
\label{eqn:L n}
  (\vc{L}_{n+1}, J_{n+1}) = \Big(\vc{L}_{n} + \sum_{s, s' \in \sr{A} } \vc{X}^{(ss')}_{n}(J_{n}) 1(\vc{L}_{n} \in \sr{U}_{ss'}), J_{n+1} \Big),
\end{align}
where $\vc{X}^{(ss')}_{n}(k)$ is the increment at time $n$ when the random walk component on $\sr{U}_{ss'}$ and the background state is $k$. By the modeling assumption, $\vc{X}^{(ss')}_{n}(k)$ is independent of $Z_{\ell}$ for $\ell \le n-1$ and $\vc{L}_{n}$ for given $s, s'$ and $k$.
 
The 2d-QBD process is a natural model for a two node queueing network under various situations including a Markovian arrival process and phase-type service time distributions. Its stationary distribution is a key characteristic for performance evaluation, but hard to get. This is even the case for a two dimensional reflecting random walk, which does not have background states (e.g., see \cite{Miya2011}). Thus, recent interest has been directed to the tail asymptotics of the stationary distribution.

\subsection{Markov additive kernel and stability}
\label{sect:Markov}

Recall that the 2d-QBD process is denoted by $\{(L_{1n}, L_{2n}, J_{n}); n=0,1,\ldots\}$. To define the 1-dimensional QBD process, for $i =1,2$, let $L^{(i)}_{n} = L_{in}$ and $\vc{J}^{(i)}_{n} = (L_{(3-i)n}, J_{n})$, then they represent level and background state at time $n$, respectively. Thus, $\{(L^{(i)}_{n}, \vc{J}^{(i)}_{n}); n \ge 0\}$ is a one-dimensional QBD process for $i=1,2$. Denote its transition matrix by $P^{(i)}$. For example, $P^{(1)}$ is given by
\begin{align*}
  P^{(1)} = \left (\begin{matrix}
N^{(1)}_{0} & N^{(1)}_{1} & 0 & \ldots & \ldots & \ldots & \cr
N^{(1)}_{-1} & Q^{(1)}_{0} & Q^{(1)}_{1} & 0 & 0 & \ldots  \cr
 0 &  Q^{(1)}_{-1} & Q^{(1)}_{0} & Q^{(1)}_{1} & 0 & \ddots \cr
0 & 0 & Q^{(1)}_{-1} & Q^{(1)}_{0} & Q^{(1)}_{1} & \ddots \cr
\vdots & \vdots & \ddots & \ddots & \ddots & \ddots
 \end{matrix} \right ),
\end{align*}
where, using $(-j)^{+} = \max(0,-j)$,
\begin{align*}
 & N^{(1)}_{j} = \left (\begin{matrix}
A^{((-j)^{+}0)}_{j0} & A^{((-j)^{+}0)}_{j1} & 0 & 0 & \cdots \cr
A^{((-j)^{+}1)}_{j(-1)} & A^{((-j)^{+}+)}_{j0} & A^{((-j)^{+}+)}_{j1} & 0 & \cdots \cr
0 & A^{((-j)^{+}+)}_{j(-1)} & A^{((-j)^{+}+)}_{j0} & A^{((-j)^{+}+)}_{j1} & \ddots \cr
\vdots & \vdots & \ddots & \ddots & \ddots
 \end{matrix} \right ),
\end{align*}
\begin{align*}
  & Q^{(1)}_{j} = \left (\begin{matrix}
A^{(+0)}_{j0} & A^{(+0)}_{j1} & 0 & 0 & \cdots \cr
A^{(+1)}_{j(-1)} & A_{j0} & A_{j1} & 0 & \cdots \cr
0 & A_{j(-1)} & A_{j0} & A_{j1} & \ddots \cr
\vdots & \vdots & \ddots & \ddots & \ddots
 \end{matrix} \right ), \qquad j=0,1, -1. 
\end{align*}

We next introduce the Markov additive process by removing the boundary at level $0$ of the one dimensional QBD process generated by $P^{(1)}$, and denote its transition probability matrix by $\ul{P}^{(1)}$. That is,
\begin{align*}
  \ul{P}^{(1)} = \left (\begin{matrix}
\ddots & \ddots & \ddots & \ddots & \ddots & \ddots & \ddots \cr
\ddots & Q^{(1)}_{-1} & Q^{(1)}_{0} & Q^{(1)}_{1} & 0 & 0 & \cdots  \cr
 \cdots &  0 & Q^{(1)}_{-1} & Q^{(1)}_{0} & Q^{(1)}_{1} & 0 & \ddots \cr
 \cdots & 0 & 0 & Q^{(1)}_{-1} & Q^{(1)}_{0} & Q^{(1)}_{1} & \ddots \cr
 \ddots & \ddots & \ddots & \ddots & \ddots & \ddots & \ddots
 \end{matrix} \right ).
\end{align*}
$\ul{P}^{(2)}$ and $Q^{(2)}_{i}$'s are similarly defined exchanging the coordinates. For $i=1,2$, let
\begin{align*}
 & Q^{(i)} = Q^{(i)}_{-1} + Q^{(i)}_{0} + Q^{(i)}_{1},
\end{align*}
then $Q^{(i)}$ is stochastic. Let $\vc{\nu}^{(i)} \equiv \{\vc{\nu}^{(i)}_{\ell}; \ell=0,1,\ldots\}$ be the left invariant positive vector of $Q^{(i)}$ when it exists, where $\vc{\nu}^{(i)}_{\ell}$ is $m_{0}$ and $m$-dimensional vectors for $\ell=0$ and $\ell \ge 1$, respectively. Define
\begin{align*}
   Q^{(i)}_{*}(\theta) = e^{-\theta} Q^{(i)}_{-1} + Q^{(i)}_{0} + e^{\theta} Q^{(i)}_{1}, \qquad i=1,2.
\end{align*}

In order to discuss the stability of the 2d-QBD process, we define the mean drifts $\mu^{(i)}_{i}$ for each $i =1,2$ as
\begin{align*}
  \mu^{(i)}_{i} = \vc{\nu}^{(i)} \frac d{d \theta} Q^{(i)}_{*}(\theta) |_{\theta=0} \vc{1},
\end{align*}
as long as  $Q^{(i)}$ is positive recurrent, where the derivative of a matrix function is taken entry-wise. Let $A = \sum_{j,k \in \dd{H}} A_{jk}$. Since $A$ is stochastic and finite dimensional, it has a stationary distribution. We denote it by the row vector $\vc{\nu}^{(+)}$. Define the mean drifts $\mu_{1}$ and $\mu_{2}$ as
\begin{align*}
  \mu_{1} = \vc{\nu}^{(+)} \sum_{k \in \dd{H}} ( - A_{(-1)k} + A_{1k}) \vc{1}, \qquad \mu_{2} = \vc{\nu}^{(+)} \sum_{j \in \dd{H}} ( - A_{j(-1)} + A_{j1}) \vc{1}.
\end{align*}
Note that, if $\mu_{i} < 0$, then $Q^{(i)}$ is positive recurrent because $\mu_{i}$ is the mean drift at off-boundary states of the QBD process generated by $Q^{(i)}$. We refer to the recent result due to \citet{Ozaw2013}.

\begin{lemma}[Theorem 5.1 and Remark 5.1 of \citet{Ozaw2012}]
\label{lem:2d-QBD stability 1}
  The 2d-QBD process $\{Z_{n}\}$ is positive recurrent if either one of the following three conditions holds.
\begin{mylist}{0}
\item [(i)] If $\mu_{1} < 0$ and $\mu_{2} < 0$, then $\mu^{(1)}_{1} < 0$ and $\mu^{(2)}_{2} < 0$.
\item [(ii)] If $\mu_{1} \ge 0$ and $\mu_{2} < 0$, then $\mu^{(1)}_{1} < 0$.
\item [(iii)] If $\mu_{1} < 0$ and $\mu_{2} \ge 0$, then $\mu^{(2)}_{2} < 0$.
\end{mylist}
On the other hand, if $\mu_{1} > 0$ and $\mu_{2} > 0$, then the 2d-QBD process is transient. Hence, if $(\mu_{1}, \mu_{2}) \ne \vc{0}$, then $\{Z_{n}\}$ is positive recurrent if and only if one of the conditions (i)--(iii) holds.
\end{lemma}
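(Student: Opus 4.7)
The plan is to treat the three positive-recurrence claims and the transience claim separately: a Foster-Lyapunov argument for the former and a strong-law / fluid-limit argument for the latter. Throughout, the Markov modulation is handled by averaging one-step drifts against the relevant stationary distribution of the background chain on each region ($\vc{\nu}^{(+)}$ in the interior, $\vc{\nu}^{(1)}$ and $\vc{\nu}^{(2)}$ on the boundary faces $\sr{U}_{1}$ and $\sr{U}_{2}$).

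For transience when $\mu_{1} > 0$ and $\mu_{2} > 0$, starting from an interior state sufficiently far from the boundary the process coincides (until hitting $\sr{U}_{0} \cup \sr{U}_{1} \cup \sr{U}_{2}$) with the Markov additive random walk governed by $\{A_{jk}\}$, whose one-step mean increment averaged against the stationary distribution $\vc{\nu}^{(+)}$ of $A$ is $(\mu_{1},\mu_{2})$. The strong law of large numbers for Markov additive processes gives $\vc{L}_{n}/n \to (\mu_{1},\mu_{2})$ a.s.\ on the event of never returning to the boundary, and a Borel-Cantelli / large-deviation bound shows that this event has probability bounded away from $0$ uniformly in sufficiently deep interior starts, which precludes positive recurrence.

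For the three positive-recurrence cases I would invoke Foster's criterion with a piecewise linear test function $f$ on $\dd{Z}_{+}^{2}$, lifted trivially in $J$. In case (ii) where $\mu_{1} \ge 0, \mu_{2} < 0$, a single linear $f(\ell_{1},\ell_{2}) = \alpha \ell_{1} + \beta \ell_{2}$ suffices: its interior drift is $\alpha \mu_{1} + \beta \mu_{2}$, negative for $\beta/\alpha$ large; on $\sr{U}_{1}$ the drift is $\alpha \mu_{1}^{(1)} + \beta \cdot (\text{bounded})$ and remains negative under $\mu_{1}^{(1)} < 0$ for $\beta/\alpha$ small enough; on $\sr{U}_{2}$ the $\beta \mu_{2}$ term dominates. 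Case (iii) is symmetric. The negativity of $f$'s drift outside a finite set, together with the finiteness of one-step moments (a consequence of skip-freeness), yields positive recurrence.

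Case (i) is the main obstacle: both interior drifts are restorative, yet the chain can escape along either axis, so one linear function is insufficient. Here I would use $f(\ell_{1},\ell_{2}) = \max(\alpha_{1}\ell_{1} + \alpha_{2}\ell_{2},\, \beta_{1}\ell_{1} + \beta_{2}\ell_{2})$, tuning the two pieces so that the piece active on $\sr{U}_{1}$ has negative $\ell_{1}$-drift against $\vc{\nu}^{(1)}$ (using $\mu_{1}^{(1)} < 0$), the piece active on $\sr{U}_{2}$ has negative $\ell_{2}$-drift against $\vc{\nu}^{(2)}$ (using $\mu_{2}^{(2)} < 0$), and both pieces descend in the interior (using $\mu_{1},\mu_{2} < 0$). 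The delicate step is matching the coefficients along the diagonal where the maximum switches, so that the drift of $f$ remains negative uniformly in the background state on every region; this is the classical geometric difficulty of two-dimensional reflecting random walks, now compounded by the need to interpret every drift inequality as an average against the appropriate finite-state stationary law, which requires the one-step moment conditions inherited from the finiteness of $|\sr{V}_{i}|$ and of the increment support.
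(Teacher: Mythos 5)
The paper does not prove this statement: it imports it as a citation to Ozawa (Theorem~5.1 and Remark~5.1 of \cite{Ozaw2012}), and \rem{2d-QBD stability 1} explicitly defers the ``careful mathematical arguments'' to that reference. There is therefore no in-paper proof to compare against; your proposal is a sketch of how one might reprove Ozawa's theorem directly.

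As such a sketch it chooses the right framework --- a fluid/strong-law argument for transience and Foster--Lyapunov test functions for positive recurrence --- but it carries a substantive gap that affects cases (ii) and (iii) and, a fortiori, case (i). You write the drift of $f=\alpha\ell_1+\beta\ell_2$ on $\sr{U}_1$ as $\alpha\mu_1^{(1)}+\beta\cdot(\mbox{bounded})$, treating $\mu_1^{(1)}$ as though it were the $\ell_1$-drift at boundary-face states $\ell_2=0$. It is not: $\mu_1^{(1)}$ is defined as $\vc{\nu}^{(1)}\frac{d}{d\theta}Q^{(1)}_*(\theta)|_{\theta=0}\vc{1}$, the $\ell_1$-drift of the one-dimensional QBD $\{(L^{(1)}_n,\vc{J}^{(1)}_n)\}$ averaged against the stationary law $\vc{\nu}^{(1)}$ of the background pair $(\ell_2,J)$ over \emph{all} $\ell_2\ge0$, not over $\ell_2=0$ alone. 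A globally linear $f$ cannot see this average, and Foster's criterion requires a pointwise drift bound, not a stationary-averaged one. To make $\mu_1^{(1)}<0$ bite, the test function must carry $\ell_2$-dependence calibrated to the induced chain (e.g.\ a $\vc{\nu}^{(1)}$-harmonic correction replacing the $\beta\ell_2$ term), or one must work with the chain sampled at $\sr{U}_1$-return times and control the $\ell_2$-excursions separately --- which is precisely the content supplied by Ozawa and by the Fayolle--Malyshev--Menshikov machinery it rests on. You flag the coefficient-matching along the switching diagonal in case~(i) as ``the delicate step'' and then leave it, but that step together with the $\mu_1^{(1)}$ issue above \emph{is} the proof, so the central claims remain unestablished. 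The transience argument for $\mu_1,\mu_2>0$ is essentially sound, though it is cleaner to note that $\min(Y_{n1},Y_{n2})\to\infty$ a.s.\ for the free walk, so $\inf_n\min(Y_{n1},Y_{n2})$ is a.s.\ finite and hence positive with probability bounded away from zero from sufficiently deep interior starts.
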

\begin{remark} {\rm
\label{rem:2d-QBD stability 1}
  The stability conditions of this lemma are exactly the same as those of the two dimensional reflecting random walk on the lattice quarter plane of \cite{FayoIasnMaly1999}, which is called a double QBD process in \cite{Miya2009} (see also \cite{KobaMiya2014}). This is not surprising because the stability is generally determined by the mean drifts of so called induced Markov chains, which are generated by removing one of the boundary faces. However, its proof requires careful mathematical arguments, which have been done by \citet{Ozaw2013}.
}\end{remark}

Throughout the paper, we assume that the 2d-QBD process has a stationary distribution, which is denoted by the row vector $\vc{\pi} \equiv \{\pi(\vc{z},k); (\vc{z},k) \in \sr{S}\}$. \lem{2d-QBD stability 1} can be used to verify this stability assumption. However, it is not so useful in application because the signs of $\mu^{(1)}_{1}$ and $\mu^{(2)}_{2}$ are hard to get. Thus, we will not use \lem{2d-QBD stability 1} in our arguments. We will return to this issue later.

\subsection{Tail asymptotics for the stationary distribution}
\label{sect:tail}

\citet{Ozaw2013} studies the tail asymptotics of the stationary distribution of the 2d-QBD process in coordinate directions, assuming stability and some additional assumptions. His arguments are based on the sufficiency part of \thr{characterization 1}. As discussed at the end of \sectn{existence superh}, this is intractable for applications. So far, we will consider the problem in a different way. In the first part of this section, we derive upper and lower bounds for the tail decay rates using relatively easy conditions. We then assume an extra condition similar to \ass{key assumption 1}, and derive the tail decay rate of the marginal stationary distribution in an arbitrary direction.

To describe the modeling primitives, we will use the following matrix moment generating functions.
\begin{align*}
 & A_{**}(\vc{\theta}) = \sum_{j,k \in \dd{H}} e^{-(j\theta_{1} + k\theta_{2})} A_{jk},\\
 & A_{*k}(\theta_{1}) = e^{-\theta_{1}} A_{(-1)k} + A_{0k} + e^{\theta_{1}} A_{1k}, \quad A_{+k}(\theta_{1}) = A_{0k} + e^{\theta_{1}} A_{1k}, \quad k \in \dd{H},\\
 & A^{(+1)}_{*(-1)}(\theta_{1}) = e^{-\theta_{1}} A^{(+1)}_{(-1)(-1)} + A^{(+1)}_{0(-1)} + e^{\theta_{1}} A^{(+1)}_{1(-1)}, \quad A^{(+1)}_{+k}(\theta_{1}) = A_{+k}(\theta_{1}), \quad k \in \dd{H}_{+},\\
 & A^{(+0)}_{*k}(\theta_{1}) = e^{-\theta_{1}} A^{(+0)}_{(-1)k} + A^{(+0)}_{0k} + e^{\theta_{1}} A^{(+0)}_{1k}, \quad A^{(+0)}_{+k}(\theta_{1}) = A^{(+0)}_{0k} + e^{\theta_{1}} A^{(+0)}_{1k}, \quad k \in \dd{H}_{+}.
\end{align*}
Similarly, $A_{j*}(\theta_{2}), A_{j+}(\theta_{2}), A^{(1+)}_{(-1)*}(\theta_{2}), A^{(1+)}_{j+}(\theta_{2}), A^{(0+)}_{j*}(\theta_{2}), A^{(0+)}_{j+}(\theta_{2})$ are defined. 
Thus, we have many matrix moment generating functions, but they are generated by the simple rule that subscripts $*$ and $+$ indicate taking the sums for indices in $\dd{H} \equiv \{0, \pm 1\}$ and $\{0,1\}$, respectively. 

Similar to $C_{0}$ of \eqn{C 0}, we define the $m \times m$ matrix generating functions:
\begin{align}
\label{eqn:A tilde 1}
 & C^{(1)}_{**}(\vc{\theta}) = A_{*+}(\vc{\theta}) + A^{(+1)}_{*(-1)}(\theta_{1}) (I - A^{(+0)}_{*0}(\theta_{1}))^{-1} A^{(+0)}_{*1}(\theta_{1}),\\
 \label{eqn:A tilde 2}
 & C^{(2)}_{**}(\vc{\theta}) = A_{+*}(\vc{\theta}) + A^{(1+)}_{(-1)*}(\theta_{2}) (I - A^{(0+)}_{0*}(\theta_{2}))^{-1} A^{(0+)}_{1*}(\theta_{2}),
\end{align}
where, for $i=1,2$, $c_{p}(A^{(i)}_{*0}(\theta_{i})) > 1$ is assumed as long as $C^{(i)}_{**}(\vc{\theta})$ is used.

Similar to $\Gamma^{(1d)}_{0+}$ and $\theta^{(A,C)}_{\max}$ of \eqn{theta min-max}, let, for $i=1,2$,
\begin{align*}
  \Gamma^{(2d)}_{i+} = \{ \vc{\theta} \in \dd{R}^{2}; \exists \vc{h} > \vc{0}, A_{**}(\vc{\theta}) \vc{h} \le \vc{h}, C^{(i)}_{**}(\vc{\theta}) \vc{h} \le \vc{h} \}.
\end{align*}
We further need the following notation.
\begin{align*}
  \Gamma^{(2d)}_{+} = \{ \vc{\theta} \in \dd{R}^{2}; \exists \vc{h} > \vc{0}, A_{**}(\vc{\theta}) \vc{h} \le \vc{h} \},\quad
  \Gamma^{(2d)}_{\max} = \{ \vc{\theta} \in \dd{R}^{2}; \exists \vc{\theta}' > \vc{\theta}, \vc{\theta}' \in \Gamma^{(2d)}_{+} \}.
\end{align*}
Recall that the Perron-Frobenius eigenvalue of $A_{**}(\vc{\theta})$ is denoted by $\gamma^{(A_{**})}(\vc{\theta})$, which is finite because $A_{**}(\vc{\theta})$ is a finite dimensional matrix. Obviously, we have
\begin{align}
\label{eqn:gamma ++}
  \Gamma^{(2d)}_{+} = \{ \vc{\theta} \in \dd{R}^{2}; \gamma^{(A_{**})}(\vc{\theta}) \le 1 \}.
\end{align}

We now define key points for $i = 1,2$.
\begin{align*}
  \vc{\theta}^{(i,\cp)} = \arg_{\vc{\theta} \in \dd{R}^{2}} \sup\{ \theta_{i} \ge 0; \vc{\theta} \in \Gamma^{(2d)}_{i+} \} ,\quad
  \vc{\theta}^{(i,\max)} = \arg_{\vc{\theta} \in \dd{R}^{2}} \sup\{ \theta_{i}; \vc{\theta} \in \Gamma^{(2d)}_{+} \}.
\end{align*}
Using these points, we define the vector $\vc{\tau}$ by
\begin{align}
\label{eqn:tau def 1}
  \tau_{1} = \sup\{\theta_{1} \in \dd{R}; \vc{\theta} \in \Gamma^{(2d)}_{1+}; \theta_{2} < \theta^{(2,\cp)}_{2} \},\quad
  \tau_{2} = \sup\{\theta_{2} \in \dd{R}; \vc{\theta} \in \Gamma^{(2d)}_{2+}; \theta_{1} < \theta^{(1,\cp)}_{1} \}.
\end{align}
Note that $\tau_{i}$ is finite because $\Gamma^{(2d)}_{i+}$ is a bounded set. It is notable that, in the definitions \eqn{tau def 1}, the condition that $A_{**}(\vc{\theta}) \vc{h} \le \vc{h}$ can be replaced by $A_{**}(\vc{\theta}) \vc{h} = \vc{h}$ or, equivalently, $\gamma^{A_{**}}(\vc{\theta}) = 1$.

For $i = 1,2$, define the function $\ol{\xi}^{(i)}(\theta_{3-i})$ for $\vc{\theta} = (\theta_{1}, \theta_{2})$ as
\begin{align*}
  \ol{\xi}^{(i)}(\theta_{3-i}) = \sup \{ \theta_{i} \in \dd{R}; \vc{\theta} \in \Gamma^{(2d)}_{i+}\} \quad \mbox{for} \quad \theta_{3-i} \in [0, \theta^{(i,\cp)}_{i}].
\end{align*}
Obviously, $\ol{\xi}^{(i)}(x)$ is a convex function because $\Gamma^{(2d)}_{i+}$ is a bounded set.

As in \cite{KobaMiya2012,KobaMiya2014}, it is convenient to introduce the following classifications for $\vc{\tau} \equiv (\tau_{1}, \tau_{2})$.
\begin{mylist}{5}
\item [(Category I)] $\theta^{(1,\cp)}_{2} < \theta^{(2,\cp)}_{2}$ and $\theta^{(2,\cp)}_{1} < \theta^{(1,\cp)}_{1}$, for which $\vc{\tau} = (\theta^{(1,\cp)}_{1}, \theta^{(2,\cp)}_{2})$.
\item [(Category II-1)] $\theta^{(1,\cp)}_{2} \ge \theta^{(2,\cp)}_{2}$ and $\theta^{(2,\cp)}_{1} < \theta^{(1,\cp)}_{1}$, for which $\vc{\tau} = (\ol{\xi}^{(1)}(\theta^{(2,\cp)}_{2}), \theta^{(2,\cp)}_{2})$.
\item [(Category II-2)] $\theta^{(1,\cp)}_{2} < \theta^{(2,\cp)}_{2}$ and $\theta^{(2,\cp)}_{1} \ge \theta^{(1,\cp)}_{1}$, for which $\vc{\tau} = (\theta^{(1,\cp)}_{1}, \ol{\xi}^{(2)}(\theta^{(1,\cp)}_{1}))$.
\end{mylist}
Since it is impossible that $\theta^{(1,\cp)}_{2} \ge \theta^{(2,\cp)}_{2}$ and $\theta^{(2,\cp)}_{1} \ge \theta^{(1,\cp)}_{1}$, these three categories completely cover the all cases (e.g., see Section 4 of \cite{Miya2009}). These categories are crucial in our arguments as we shall see \thr{lower bound 1} below.

We first derive upper bounds. Let $\varphi$ be the moment generating function of $\vc{L}$. Namely, $\varphi(\vc{\theta}) = \dd{E}(e^{\br{\vc{L}, \vc{\theta}}})$. Define its convergence domain as
\begin{align*}
  \sr{D} = \{ \vc{\theta} \in \dd{R}^{2}; \exists \vc{\theta}' > \vc{\theta}, \varphi(\vc{\theta}') < \infty \}.
\end{align*}
We prove the following lemma in \app{domain 0}. 
\begin{lemma}
\label{lem:domain 0}
Under the stability assumption,
\begin{align}
\label{eqn:domain 0}
   \Gamma^{(2d)}_{\vc{\tau}} \equiv \{ \vc{\theta} \in \Gamma^{(2d)}_{\max}; \vc{\theta} < \vc{\tau} \} \subset \sr{D}.
\end{align}
\end{lemma}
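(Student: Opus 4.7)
The plan is a Foster--Lyapunov drift argument. For an arbitrary $\vc{\theta}\in\Gamma^{(2d)}_{\vc{\tau}}$, I will exhibit a positive function $V$ on the state space $\sr{S}$ such that
\begin{equation*}
 V(\vc{L},J)\ge c\,e^{\langle\vc{L},\vc{\theta}''\rangle}\qquad\text{and}\qquad PV\le\rho V+b\,1_{C}
\end{equation*}
for some $\vc{\theta}''>\vc{\theta}$, constants $c>0$, $\rho\in(0,1)$, $b<\infty$, and a finite set $C\subset\sr{S}$. Because $\{Z_{n}\}$ is irreducible and positive recurrent under the stability assumption, the standard Meyn--Tweedie drift theorem will then give $\pi(V)<\infty$, hence $\varphi(\vc{\theta}'')<\infty$, and therefore $\vc{\theta}\in\sr{D}$.

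The construction proceeds in two substeps. First, I perturb: by $\vc{\theta}\in\Gamma^{(2d)}_{\max}$ there exists $\vc{\theta}^{\ast}>\vc{\theta}$ with $\gamma^{(A_{**})}(\vc{\theta}^{\ast})\le 1$, and by $\vc{\theta}<\vc{\tau}$ I can slide $\vc{\theta}''$ strictly between $\vc{\theta}$ and $\min(\vc{\theta}^{\ast},\vc{\tau})$ componentwise. Continuity and convexity of the Perron--Frobenius functions $\gamma^{(A_{**})}(\cdot)$ and $\gamma_{\rs{pf}}(C^{(i)}_{**}(\cdot))$ for $i=1,2$, combined with the trichotomy into Categories~I, II-1, II-2 introduced just before the lemma, let me choose $\vc{\theta}''$ so that $\gamma^{(A_{**})}(\vc{\theta}'')<1$ and $\gamma_{\rs{pf}}(C^{(i)}_{**}(\vc{\theta}''))<1$ hold simultaneously with a common positive eigenvector $\vc{h}$; this is the 2d analogue of the sufficient condition in \thr{super-h 2}. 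Second, I build $V$ region by region in the form $V(\vc{L},J)=e^{\langle\vc{L},\vc{\theta}''\rangle}\,g(\vc{L},J)$, with $g=\vc{h}$ on the interior $\sr{U}_{+}$ and, on the boundary half-axes $\sr{U}_{1}$ and $\sr{U}_{2}$, with $g$ obtained by the canonical--form resolvent $(I-A^{(+0)}_{*0}(\theta''_{1}))^{-1}A^{(+0)}_{*1}(\theta''_{1})\vc{h}$ and its symmetric counterpart. This mirrors the $(I-B_{0})^{-1}B_{1}$ reduction used in \lem{super-h 2}, and a direct computation essentially identical to the proof of \thr{super-h 2} then yields $PV\le\rho V$ on $\sr{U}_{+}\cup\sr{U}_{1}\cup\sr{U}_{2}$. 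The exceptional set $C$ consists of $\sr{U}_{0}$ together with the finitely many neighbouring states, on which $PV$ is automatically bounded since $g$ is finite-valued there.

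The main obstacle is the second substep. The seed $\vc{h}$ must be common to both half-axis extensions of $g$, so that the resulting $V$ is superharmonic in both coordinate directions at once and the same contraction rate $\rho$ applies to $\sr{U}_{1}$ and to $\sr{U}_{2}$. This is exactly what the three category conditions around $\vc{\tau}$ are designed to permit: in each category $\vc{\tau}$ is either the corner of $\Gamma^{(2d)}_{1+}\cap\Gamma^{(2d)}_{2+}$ or lies on the upper boundary of one of those sets, so $\vc{\theta}''<\vc{\tau}$ leaves slack in whichever direction is not binding. The invertibility of the resolvents $(I-A^{(+0)}_{*0}(\theta''_{1}))^{-1}$ and $(I-A^{(0+)}_{0*}(\theta''_{2}))^{-1}$, which is needed for $g$ to be well defined, follows from $\vc{\theta}''\in\Gamma^{(2d)}_{i+}$ together with the same convergence--parameter argument used just after \eqn{c 0}.
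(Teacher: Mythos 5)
Your strategy---constructing a single exponential Foster--Lyapunov function $V(\vc{L},J)=e^{\br{\vc{L},\vc{\theta}''}}g(\vc{L},J)$ and invoking a drift criterion to conclude $\pi(V)<\infty$---is a genuinely different approach from the paper's, which works with the stationary equation in moment generating function form (\lem{MGF se 1}) and its inequality version (\lem{domain 1}), and then runs an iterative bootstrap along the two coordinate directions in the style of the reference \cite{KobaMiya2014}.

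There is, however, a genuine gap in your second substep. You need, for each $\vc{\theta}\in\Gamma^{(2d)}_{\vc{\tau}}$, a perturbed $\vc{\theta}''>\vc{\theta}$ for which $\gamma^{(A_{**})}(\vc{\theta}'')<1$ and both $\gamma_{\rs{pf}}(C^{(i)}_{**}(\vc{\theta}''))<1$ hold with a \emph{common} positive vector $\vc{h}$; that is, you need $\vc{\theta}''$ to lie in $\Gamma^{(2d)}_{1+}\cap\Gamma^{(2d)}_{2+}$. This is not achievable for all $\vc{\theta}<\vc{\tau}$. In Category~I one has $\vc{\tau}=(\theta^{(1,\cp)}_{1},\theta^{(2,\cp)}_{2})$, the componentwise envelope of the two extremal points, which lies \emph{outside} both $\Gamma^{(2d)}_{1+}$ and $\Gamma^{(2d)}_{2+}$ precisely because $\theta^{(1,\cp)}_{2}<\theta^{(2,\cp)}_{2}$ and $\theta^{(2,\cp)}_{1}<\theta^{(1,\cp)}_{1}$. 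Concretely, take $\vc{\theta}=(\tau_{1}-\epsilon,\,\theta^{(1,\cp)}_{2}+\delta)$ with $\epsilon>0$ small and $0<\delta<\theta^{(2,\cp)}_{2}-\theta^{(1,\cp)}_{2}$; then $\vc{\theta}<\vc{\tau}$ and $\vc{\theta}\in\Gamma^{(2d)}_{\max}$, yet since $\tau_{1}=\theta^{(1,\cp)}_{1}$ is the supremum of $\theta_{1}$ over the bounded convex set $\Gamma^{(2d)}_{1+}$ and $\theta^{(1,\cp)}_{2}$ is the corresponding second coordinate, any $\vc{\theta}''$ with $\theta''_{1}>\tau_{1}-\epsilon$ and $\theta''_{2}>\theta^{(1,\cp)}_{2}+\delta$ escapes $\Gamma^{(2d)}_{1+}$ once $\epsilon$ is small relative to $\delta$. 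So your assertion that ``$\vc{\tau}$ is the corner of $\Gamma^{(2d)}_{1+}\cap\Gamma^{(2d)}_{2+}$'' is incorrect, and no admissible $\vc{\theta}''$ exists for such $\vc{\theta}$. The paper's three hypotheses (a)--(c) in \lem{domain 1} each require membership in only one of $\Gamma^{(2d)}_{+}$, $\Gamma^{(2d)}_{1+}$, $\Gamma^{(2d)}_{2+}$, paired with finiteness of a single boundary MGF; alternating (b) and (c) pushes finiteness out to all of $\Gamma^{(2d)}_{\vc{\tau}}$. Your argument, as written, establishes the inclusion only on roughly the decreasing hull of $\Gamma^{(2d)}_{1+}\cap\Gamma^{(2d)}_{2+}$, a proper subset of $\Gamma^{(2d)}_{\vc{\tau}}$, and would need an analogous iterative extension to complete the proof.
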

Using this lemma, the following upper bound is obtained.

\begin{theorem}
\label{thr:upper bound 1}
  Under the stability condition, we have, for each non-zero vector $\vc{c} \ge \vc{0}$,
\begin{align}
\label{eqn:upper bound 1}
  \lim_{x \to \infty} \frac 1x \log \dd{P}(\br{\vc{L}, \vc{c}} > x) \le - \sup \{u \ge 0; u \vc{c} \in \Gamma^{(2d)}_{\vc{\tau}}\}.
\end{align}
\end{theorem}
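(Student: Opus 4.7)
The approach is the standard Chernoff/moment-generating-function upper bound, coupled with the key ingredient already isolated as \lem{domain 0}. Since Lemma \lemt{domain 0} shows $\Gamma^{(2d)}_{\vc{\tau}} \subset \sr{D}$, the nontrivial work has been done there; the remaining task is routine.

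First, I fix any $u \ge 0$ for which $u\vc{c} \in \Gamma^{(2d)}_{\vc{\tau}}$. By \lem{domain 0} this gives $u\vc{c} \in \sr{D}$, so by the definition of $\sr{D}$ there exists $\vc{\theta}' > u\vc{c}$ with $\varphi(\vc{\theta}') < \infty$. Because $\vc{L}$ takes values in $\dd{Z}_{+}^{2}$ and $\vc{c} \ge \vc{0}$, the function $\vc{\theta} \mapsto \varphi(\vc{\theta})$ is coordinate-wise non-decreasing on the nonnegative orthant, so $\varphi(u\vc{c}) \le \varphi(\vc{\theta}') < \infty$.

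Second, I apply the Markov (Chernoff) inequality to the nonnegative random variable $e^{u\br{\vc{L},\vc{c}}}$: for every $x > 0$,
\[
\dd{P}(\br{\vc{L},\vc{c}} > x) \;=\; \dd{P}\!\left(e^{u\br{\vc{L},\vc{c}}} > e^{ux}\right) \;\le\; e^{-ux}\,\dd{E}\!\left[e^{u\br{\vc{L},\vc{c}}}\right] \;=\; e^{-ux}\varphi(u\vc{c}).
\]
Taking logarithms, dividing by $x$, and sending $x \to \infty$ yields
\[
\limsup_{x \to \infty} \frac{1}{x}\log \dd{P}(\br{\vc{L},\vc{c}} > x) \;\le\; -u.
\]
Since this holds for every $u \ge 0$ with $u\vc{c} \in \Gamma^{(2d)}_{\vc{\tau}}$, taking the supremum over such $u$ gives the right-hand side of \eqn{upper bound 1}. (The statement should be read with $\limsup$; the matching lower bound and hence the existence of the limit is established separately.)

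The main obstacle is not in this proof itself but in \lem{domain 0}, which supplies the crucial passage from the ``analytic'' region $\Gamma^{(2d)}_{\vc{\tau}}$, defined through the matrix MGF $A_{**}$ and its boundary counterparts $C^{(i)}_{**}$, to the ``probabilistic'' region $\sr{D}$ where $\varphi$ converges. Once that inclusion is granted, the Chernoff-style estimate above is essentially automatic, as is the passage to the supremum. No subtle issue arises with the supremum being vacuous, because $\vc{0} \in \Gamma^{(2d)}_{\vc{\tau}}$ under the stability assumption, so the set $\{u \ge 0 : u\vc{c} \in \Gamma^{(2d)}_{\vc{\tau}}\}$ is non-empty and the supremum is a well-defined nonnegative number.
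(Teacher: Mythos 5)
Your proof is correct and follows essentially the same route as the paper's: both deduce $\varphi(u\vc{c})<\infty$ from \lem{domain 0}, apply the Markov/Chernoff bound $e^{ux}\dd{P}(\br{\vc{L},\vc{c}}>x)\le\varphi(u\vc{c})$, take logarithms and let $x\to\infty$, then optimize over $u$. Your additional remarks (monotonicity of $\varphi$ on the orthant, reading the statement as $\limsup$, and non-vacuousness of the supremum) are harmless elaborations that the paper leaves implicit.
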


This theorem is proved  in \app{upper bound 1}. We next derive lower bounds. We first consider lower bounds concerning the random walk component in an arbitrary direction. For this, we consider the two dimensional random walk modulated by $\{A_{jk}; j,k \in \dd{H}\}$, which is denoted by $\{(\vc{Y}_{n}, J_{n}); n \ge 1\}$. Similar to Lemma 7 of \cite{KobaMiya2014}, we have the following fact, which is proved in \app{lower bound 2}.

\begin{lemma}
\label{lem:lower bound 2}
  For each non-zero vector $\vc{c} \ge \vc{0}$,  
\begin{align}
\label{eqn:lower bound 3}
  \liminf_{x \to \infty} \frac 1{x} \log \dd{P}(\vc{L} > x \vc{c} ) \ge - \sup\{ \br{\vc{\theta}, \vc{c}}; \vc{\theta} \in \Gamma^{(2d)}_{+} \},
\end{align}
  and therefore $\varphi(\vc{\theta})$ is infinite for $\vc{\theta} \not\in \ol{\Gamma}^{(2d)}_{\max}$, where $\ol{\Gamma}^{(2d)}_{\max}$ is the closure of $\Gamma^{(2d)}_{\max}$.
\end{lemma}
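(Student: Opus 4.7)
The plan is to bound $\dd{P}(\vc{L} > x\vc{c})$ from below by restricting to trajectories that start well inside $\sr{U}_+$ and then reach the target region $\{\vc{\ell} > x\vc{c}\}$ using only interior transitions, so that the dynamics are governed by the free Markov additive random walk $\{(\vc{Y}_n, J_n)\}$ with kernel $\{A_{jk}\}$. Concretely, fix $\vc{\theta}^* \in \Gamma^{(2d)}_{+}$ realizing the supremum in the right-hand side of \eqn{lower bound 3} (this exists because $\gamma^{(A_{**})}$ is continuous and $\Gamma^{(2d)}_{+}$ is closed and bounded above in the directions of $\vc{c}\ge\vc{0}$), so that $\gamma^{(A_{**})}(\vc{\theta}^*)=1$ and $\br{\vc{\theta}^*,\vc{c}} = \sup\{\br{\vc{\theta},\vc{c}}; \vc{\theta}\in\Gamma^{(2d)}_{+}\}$. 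Let $\vc{h}^*$ denote the right Perron–Frobenius eigenvector of $A_{**}(\vc{\theta}^*)$. Then the twisted kernel $\widehat{A}_{jk} = e^{j\theta^*_1 + k\theta^*_2}\Delta_{\vc{h}^*}^{-1} A_{jk}\Delta_{\vc{h}^*}$ is stochastic, so it defines a genuine Markov additive random walk $\{(\widehat{\vc{Y}}_n, \widehat J_n)\}$.

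I would then pick a fixed interior starting point $(\vc{\ell}_0, j_0)$ with $\vc{\ell}_0$ large enough (depending on $x$ only through a linear bound in $x$) so that a trajectory with drift near some positive multiple of $\vc{c}$ stays inside $\sr{U}_{+}$ for the duration needed to reach $x\vc{c}$. By stationarity,
\begin{align*}
\dd{P}(\vc{L} > x\vc{c})
 \;=\; \dd{P}(\vc{L}_n > x\vc{c})
 \;\ge\; \pi(\vc{\ell}_0, j_0)\,\dd{P}_{(\vc{\ell}_0,j_0)}\!\big(\vc{L}_n > x\vc{c},\ \vc{L}_k \in \sr{U}_+ \ \forall k\le n\big),
\end{align*}
and on the event that the path stays interior the process coincides with the free random walk $\vc{Y}$. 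Under the exponential change of measure associated with $\vc{\theta}^*$, this becomes
\begin{align*}
\dd{P}_{(\vc{\ell}_0,j_0)}\!\big(\vc{Y}_n > x\vc{c}-\vc{\ell}_0,\ \text{interior path}\big)
 \;\ge\; e^{-\br{\vc{\theta}^*,\,x\vc{c}-\vc{\ell}_0}}\,\frac{h^*_{j_0}}{h^*_{\widehat J_n}}\,
 \widehat{\dd{P}}_{(\vc{\ell}_0,j_0)}\!\big(\widehat{\vc{Y}}_n > x\vc{c}-\vc{\ell}_0,\ \text{interior path}\big),
\end{align*}
with $n = \lfloor x\beta\rfloor$ for a suitable $\beta>0$ chosen so that the mean drift of $\widehat{\vc{Y}}_n/n$ under $\widehat{\dd{P}}$ points strictly into $\sr{U}_+$ in the direction of $\vc{c}/\beta$. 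By the law of large numbers for the positive recurrent Markov additive process $\widehat{\vc{Y}}_n$, the last probability tends to a positive limit, and the boundedness of $h^*_j$ contributes only an $O(1)$ factor. Taking logarithms and dividing by $x$ yields $\liminf \tfrac1x \log \dd{P}(\vc{L}>x\vc{c}) \ge -\br{\vc{\theta}^*,\vc{c}}$, which is \eqn{lower bound 3}.

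The second assertion follows quickly: if $\vc{\theta}\notin\ol{\Gamma}^{(2d)}_{\max}$, then because $\Gamma^{(2d)}_{\max}$ is convex and contains a neighborhood of $-\infty$ in every positive direction, there exists a non-zero $\vc{c}\ge\vc{0}$ with $\br{\vc{\theta},\vc{c}} > \sup\{\br{\vc{\theta}',\vc{c}}; \vc{\theta}'\in\Gamma^{(2d)}_{+}\}$ (pick $\vc{c}$ as a separating direction and use that $\Gamma^{(2d)}_{+}$ is bounded above in each positive coordinate). Combining $\varphi(\vc{\theta}) \ge e^{x\br{\vc{\theta},\vc{c}}}\,\dd{P}(\vc{L}>x\vc{c})$ with the lower bound just proved forces $\varphi(\vc{\theta})=\infty$. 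The main obstacle I anticipate is the confinement-to-interior step: one must verify that requiring the free walk to stay in $\sr{U}_+$ while reaching $x\vc{c}$ loses at most an $o(x)$ exponential factor, which is where the choice of $\beta$ (so that the drift under $\widehat{\dd{P}}$ matches the direction $\vc{c}$ and keeps the path away from the axes) and a standard Cramér-type concentration estimate for the twisted walk have to be executed carefully.
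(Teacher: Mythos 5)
Your approach is genuinely different from the paper's: you set up a direct exponential change of measure at the optimal tilt $\vc{\theta}^*$ and try to show that the tilted walk reaches $x\vc{c}$ while staying interior, whereas the paper avoids analyzing the confinement event probabilistically by applying the Borovkov--Mogul'ski\u{\i} permutation lemma twice (giving a clean $1/n^2$ polynomial loss uniformly over $\vc{c}$), feeds the resulting free-walk estimate into the Ney--Nummelin Markov-modulated Cram\'er lower bound as a black box, and uses an occupation-measure (cycle) representation rather than your simpler conditioning ``by stationarity''. Your stationarity inequality $\dd{P}(\vc{L}>x\vc{c})\ge\pi(\vc{\ell}_0,j_0)\,\dd{P}_{(\vc{\ell}_0,j_0)}(\cdot)$ is in fact a legitimate simplification of the paper's cycle argument, and the tilted kernel $\widehat{A}_{jk}$ is correctly stochastic because $\gamma^{(A_{**})}(\vc{\theta}^*)=1$.

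However, there are concrete gaps that prevent your sketch from closing. First, the displayed change-of-measure inequality is reversed for the typical situation $\vc{\theta}^*\ge\vc{0}$: on $\{\vc{Y}_n>x\vc{c}-\vc{\ell}_0\}$ one has $e^{-\br{\vc{\theta}^*,\vc{Y}_n}}\le e^{-\br{\vc{\theta}^*,x\vc{c}-\vc{\ell}_0}}$, not $\ge$. The standard repair is to replace the half-space target by a thin shell $\{x\vc{c}<\vc{Y}_n<(1+\epsilon)x\vc{c}\}$ and let $\epsilon\downarrow0$ (the Cram\'er shell trick you allude to but do not actually implement), and to keep the random factor $1/h^*_{\widehat{J}_n}$ inside the tilted expectation, bounding it by $\min_j 1/h^*_j>0$. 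Second, you cannot let $\vc{\ell}_0$ ``depend on $x$ through a linear bound'': then $\pi(\vc{\ell}_0,j_0)$ decays exponentially in $x$ at a rate you do not control, which defeats the purpose; $\vc{\ell}_0$ must be a fixed point as in the paper (which uses $\vc{z}_0=(2,2)$). Third, invoking ``the law of large numbers'' for the confinement probability is insufficient: staying interior for $O(x)$ steps from a fixed starting point is a path event, and for it you need a separate cone/quadrant persistence estimate. This is where the direction of $\vc{c}$ matters: if $\vc{c}$ is strictly positive the tilted drift is strictly positive and the persistence probability stays bounded away from zero, but if some $c_i=0$ the tilted drift in that coordinate vanishes (the optimizer $\vc{\theta}^*$ sits at $\vc{\theta}^{(i,\max)}$ whose outward normal is $\vc{e}_{3-i}$), so ``drift strictly into $\sr{U}_+$'' fails and the persistence probability is only polynomially small in $x$ — which still contributes $o(x)$ on the log scale, but only after a dedicated argument. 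The permutation lemma used in the paper avoids all of these case distinctions at once, which is why it is the route chosen there.
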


Note that the upper bound in \eqn{upper bound 1} is generally larger than the lower bound in \eqn{lower bound 3}. To get tighter lower bounds, we use the one dimensional QBD formulation. For this, we require assumptions similar to \ass{key assumption 1}.

\begin{assumption} {\rm
\label{ass:key assumption 2}
For each $\vc{\theta} \in \dd{R}^{2}$ satisfying that $\gamma^{(A_{**})}(\vc{\theta}) = 1$, for each $i=1,2$, there is an $m_{0}$-dimensional positive vector $\vc{h}^{(0i)}(\vc{\theta})$ and functions $c^{(i)}_{0}(\vc{\theta})$ and $c^{(i)}_{1}(\vc{\theta})$ such that either one of $c^{(i)}_{1}(\vc{\theta})$ or $c^{(i)}_{2}(\vc{\theta})$ equals one, and
\begin{align}
\label{eqn:key assumption 2a}
 & A^{(i)}_{*(i0)}(\theta_{i}) \vc{h}^{(0i)}(\vc{\theta}) + e^{\theta_{2}} A^{(i)}_{*(i1)}(\theta_{i}) \vc{h}^{(A_{**})}(\vc{\theta}) = c^{(i)}_{0}(\vc{\theta}) \vc{h}^{(0i)}(\vc{\theta}),\\
\label{eqn:key assumption 2b}
 & \lefteqn{e^{-\theta_{2}} A^{(i)}_{*(i(-1))}(\theta_{i}) \vc{h}^{(0i)}(\vc{\theta}) + A_{*(i0)}(\theta_{i}) \vc{h}^{(A_{**})}(\vc{\theta}) + e^{\theta_{2}} A_{*(i1)}(\theta_{i}) \vc{h}^{(A_{**})}(\vc{\theta})} \nonumber \\
 & \hspace{50ex} = c^{(i)}_{1}(\vc{\theta}) \vc{h}^{(A_{**})}(\vc{\theta}),
\end{align}
where $*(ik) = *k$ for $i=1$ and $*(ik) = k*$ for $i=2$. We recall that $\vc{h}^{(A_{**})}(\vc{\theta})$ is the Perron-Frobenius eigenvector of $A_{**}(\vc{\theta})$.
}\end{assumption}

\begin{theorem}
\label{thr:lower bound 1}
  Assume that the 2d-QBD process has a stationary distribution and \ass{key assumption 2}. Then, we have the following facts for each $i = 1,2$. For each $\ell \ge 0$ and either $k \in \sr{V}_{1}$ for $\ell=0$ or $k \in \sr{V}_{+}$ for $\ell \ge 1$,
\begin{align}
\label{eqn:lower bound 1}
  \lim_{n \to \infty} \frac 1n \log \dd{P}(L_{i} > n, L_{3-i} = \ell, J = k) = - \tau_{i}.
\end{align}
In particular, for Category I satisfying $\tau_{i} < \theta^{(i,\max)}_{i}$, there is a positive constant $c^{(i)}_{\ell k}$ such that
\begin{align}
\label{eqn:direction 1}
  \lim_{n \to \infty} e^{\tau_{i} n} \dd{P}(L_{i} > n, L_{3-i} = \ell, J = k) = c^{(i)}_{\ell k}.
\end{align}
Otherwise, for Category II-i satisfying $\tau_{i} < \theta^{(i,\cp)}_{i}$, there are positive constants $\ul{d}^{(i)}_{\ell k}$ and $\ol{d}^{(i)}_{\ell k}$ such that
\begin{align}
\label{eqn:direction 2}
 & \liminf_{n \to \infty} e^{\tau_{i} n} \dd{P}(L_{i} > n, L_{3-i} = \ell, J = k) \ge \ul{d}^{(i)}_{\ell k},\\
\label{eqn:direction 3}
 & \limsup_{n \to \infty} e^{\tau_{i} n} \dd{P}(L_{i} > n, L_{3-i} = \ell, J = k) \le \ol{d}^{(i)}_{\ell k}.
\end{align}
\end{theorem}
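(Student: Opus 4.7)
The plan is to combine the upper bound already available from \thr{upper bound 1} with a matching lower bound derived through the one-dimensional QBD formulation of \sectn{Markov} and the superharmonic-vector machinery of \sectn{nonnegative}. Applying \thr{upper bound 1} with $\vc{c}$ equal to the $i$-th standard basis vector immediately gives $\limsup_{n}\frac{1}{n}\log \dd{P}(L_{i}>n,L_{3-i}=\ell,J=k)\le -\tau_{i}$, because $\dd{P}(L_{i}>n,L_{3-i}=\ell,J=k)\le \dd{P}(L_{i}>n)$ and the supremum in \eqn{upper bound 1} evaluates to $\tau_{i}$ by the definition \eqn{tau def 1}.

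For the lower bound, I would formulate the 2d-QBD process as the 1d-QBD with transition matrix $P^{(i)}$, and consider the companion nonnegative matrix $K^{(i)}$ with QBD block structure obtained by looking for positive vectors $\vc{y}$ with $e^{\theta_{i}}K^{(i)}\vc{y}\le \vc{y}$. The tail rate of the stationary distribution $\vc{\pi}^{(i)}$ of $P^{(i)}$ in the level direction is $-\log c_{p}(K^{(i)})$ via \lem{t-invariant m 1}--\lem{t-invariant m 2}. To show $c_{p}(K^{(i)})=e^{\tau_{i}}$, I would feed into \thr{super-h 2} the candidate superharmonic vector whose entry at position $(n,\ell,k)$ is $e^{\theta_{i}n}\,e^{\theta_{3-i}\ell}\,h^{(A_{**})}_{k}(\vc{\theta})$ on interior blocks and $e^{\theta_{i}n}\,h^{(0i)}_{k}(\vc{\theta})$ on the adjacent boundary face, for $\vc{\theta}$ ranging over the slice $\gamma^{(A_{**})}(\vc{\theta})=1$. \ass{key assumption 2} with the normalization $c^{(i)}_{1}(\vc{\theta})=1$ supplies exactly the block-level identities \eqn{key assumption 1a}--\eqn{key assumption 1b} of \ass{key assumption 1} required by \thr{super-h 2}, once one collapses the infinite-dimensional boundary block against the resolvent $(I-A^{(+0)}_{*0}(\theta_{i}))^{-1}$ and recognizes the outcome as $C^{(i)}_{**}(\vc{\theta})$ from \eqn{A tilde 1}--\eqn{A tilde 2}. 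The set of admissible $\theta_{i}$ obtained this way is precisely $\{\theta_{i}:\vc{\theta}\in \Gamma^{(2d)}_{i+},\ \gamma^{(A_{**})}(\vc{\theta})=1\}$, whose supremum is $\tau_{i}$ by \eqn{tau def 1}.

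The final step is to translate $c_{p}(K^{(i)})=e^{\tau_{i}}$ into the probabilistic statements \eqn{direction 1}--\eqn{direction 3} using \lem{t-invariant m 1} and \lem{t-invariant m 2}. Category I with $\tau_{i}<\theta^{(i,\max)}_{i}$ corresponds to the strict inequality $c_{p}(K^{(i)})<c_{p}(K^{(i)}_{+})$, since $c_{p}(K^{(i)}_{+})$ is governed by the unconstrained maximum $\theta^{(i,\max)}_{i}$ of $\theta_{i}$ over $\Gamma^{(2d)}_{+}$; by \lem{t-invariant m 1} this is the $t$-positive regime, so the $t$-invariant measure has finite total mass and \lem{t-invariant m 2} yields the exact geometric asymptotics with a strictly positive constant $c^{(i)}_{\ell k}$ as in \eqn{direction 1}. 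Category II-i with $\tau_{i}<\theta^{(i,\cp)}_{i}$ falls into $c_{p}(K^{(i)})=c_{p}(K^{(i)}_{+})$, which is $t$-null or $t$-transient, so \lem{t-invariant m 2} only delivers two-sided geometric bounds with positive constants $\ul{d}^{(i)}_{\ell k},\ol{d}^{(i)}_{\ell k}$ as in \eqn{direction 2}--\eqn{direction 3}; the matching upper asymptotic is reinforced by the upper bound from \thr{upper bound 1} already established above.

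The main obstacle will be the bookkeeping in the middle step: carefully constructing $K^{(i)}$, passing to the canonical form of \lem{super-h 2}, and showing that the reduction of the infinite-dimensional boundary block via $(I-A^{(+0)}_{*0}(\theta_{i}))^{-1}$ produces exactly the compound matrix $C^{(i)}_{**}(\vc{\theta})$ that \ass{key assumption 2} is tailored to. Verifying that the normalization $c^{(i)}_{1}(\vc{\theta})=1$ — rather than $c^{(i)}_{0}(\vc{\theta})=1$ — is the one matching the boundary kernels $N^{(i)}_{j}$ of the 1d-QBD, and confirming strict positivity of $c^{(i)}_{\ell k},\ul{d}^{(i)}_{\ell k},\ol{d}^{(i)}_{\ell k}$ via irreducibility of $A_{**}(\vc{\theta})$, are technical but follow the standard matrix-analytic pattern once the canonical form is in place.
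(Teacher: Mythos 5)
The proposal follows the paper's overall scaffolding (upper bound via \thr{upper bound 1}, lower bound via the one-dimensional QBD with $Q^{(i)}_*(\theta_i)$ and the superharmonic machinery of \sectn{nonnegative}, then classification via \lem{t-invariant m 1}--\lem{t-invariant m 2} and a Miyazawa--Zhao type theorem). But there is a genuine gap in the central step. You assert that the set of admissible $\theta_i$ obtained from \thr{super-h 2} applied to the canonical form $\ol{Q}^{(i)}_*(\theta_i)$ has supremum $\tau_i$. That is wrong. What the superharmonic characterization of the rate matrix gives, through the Wiener--Hopf factorization (equation \eqn{cp s 1}), is $\log c_p(R^{(i)})=\theta^{(i,\cp)}_i$, i.e. the supremum of $\theta_i$ over $\Gamma^{(2d)}_{i+}$ \emph{without} the extra constraint $\theta_{3-i}<\theta^{(3-i,\cp)}_{3-i}$ appearing in the definition \eqn{tau def 1} of $\tau_i$. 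In Category II-$i$ that constraint is binding, $\tau_i<\theta^{(i,\cp)}_i$, and your identity $c_p(K^{(i)})=e^{\tau_i}$ fails. Concretely: the Cauchy--Hadamard argument applied to $\vc{\pi}^{(i)}_n=\vc{\pi}^{(i)}_1 (R^{(i)})^{n-1}$ yields the paper's \eqn{lower bound 5}, namely $\liminf_{n}\tfrac1n\log\dd{P}(L_i>n,\ldots)\ge-\theta^{(i,\cp)}_i$, which is strictly weaker than the needed $\ge-\tau_i$ in Category II-$i$. Pinning down $\tau_i$ there requires a further argument about how the decay rate of the boundary vector $\vc{\pi}^{(i)}_1$ in the background-coordinate direction interacts with $R^{(i)}$; your proposal does not supply this.

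There are also two smaller flaws worth noting. First, the candidate superharmonic vector you feed into \thr{super-h 2} carries an extraneous factor $e^{\theta_i n}$: the matrix $Q^{(i)}_*(\theta_i)$ already absorbs the level-direction exponential twist into its definition, so the ansatz for its superharmonic vector is $\vc{y}_\ell=e^{\theta_{3-i}\ell}\vc{h}$ indexed only by the background $(\ell,k)$, not by $(n,\ell,k)$. Second, you invoke \lem{t-invariant m 1}--\lem{t-invariant m 2} to justify that the stationary tail decay rate equals $-\log c_p(K^{(i)})$; those lemmas classify a QBD-structured nonnegative matrix and give $u$-invariant measures, but the bridge from $c_p(R^{(i)})$ to the superharmonic characterization of $Q^{(i)}_*(\theta_i)$ is the Wiener--Hopf factorization for the Markov additive kernel $\ul{P}^{(i)}$, which you omit. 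These are correctable bookkeeping issues, but the conflation of $\tau_i$ with $\theta^{(i,\cp)}_i$ is a substantive error that leaves Category II unproved.
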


This theorem will be proved in \app{one}. Similar results without \ass{key assumption 2} are obtained as Theorem 4.1 in \cite{Ozaw2013}. However, the method assumes other assumptions such as Assumption 3.1 of \cite{Ozaw2013}. Furthermore, it requires a large amount of numerical work to compute $\tau_{i}$.

Combining Theorems \thrt{upper bound 1} and \thrt{lower bound 1} and \lem{lower bound 2}, we have the following tail asymptotics.

\begin{theorem}
\label{thr:decay rate 1}
  Under the assumptions of \thr{lower bound 1}, we have
\begin{align}
\label{eqn:domain 1}
  \sr{D} = \Gamma^{(2d)}_{\vc{\tau}} \equiv \{ \vc{\theta} \in \Gamma^{(2d)}_{\max}; \vc{\theta} < \vc{\tau} \}.
\end{align}
and, for each non-zero vector $\vc{c} \ge \vc{0}$,
\begin{align}
\label{eqn:decay rate 1}
  \lim_{x \to \infty} \frac 1x \log \dd{P}(\br{\vc{L}, \vc{c}} > x) = - \sup \{u \ge 0; u \vc{c} \in \sr{D}\}.
\end{align}
\end{theorem}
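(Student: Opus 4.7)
My plan is to prove the theorem in three stages: identify the convergence domain as $\sr{D}=\Gamma^{(2d)}_{\vc{\tau}}$, inherit the upper bound from \thr{upper bound 1}, and then build a matching lower bound by distinguishing two cases.

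For the domain identity, the inclusion $\Gamma^{(2d)}_{\vc{\tau}}\subset\sr{D}$ is exactly \lem{domain 0}. For the reverse, fix $\vc{\theta}\in\sr{D}$ and a witness $\vc{\theta}'>\vc{\theta}$ with $\varphi(\vc{\theta}')<\infty$. First I would show $\vc{\theta}\in\Gamma^{(2d)}_{\max}$: if $\vc{\theta}\notin\Gamma^{(2d)}_{\max}$, then no point of $\Gamma^{(2d)}_{+}$ strictly dominates $\vc{\theta}$, and a small-perturbation argument (using the strict gap $\vc{\theta}'-\vc{\theta}>\vc{0}$) shows $\vc{\theta}'\notin\overline{\Gamma}^{(2d)}_{\max}$, whence \lem{lower bound 2} forces $\varphi(\vc{\theta}')=\infty$, a contradiction. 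Second, I would show $\vc{\theta}<\vc{\tau}$: if, for instance, $\theta_{1}\ge\tau_{1}$ so that $\theta_{1}'>\tau_{1}$, fix any $k\in\sr{V}_{1}$ and note $\dd{E}[e^{\theta_{1}'L_{1}}1(L_{2}=0,J=k)]\le\varphi(\vc{\theta}')<\infty$; Markov's inequality then yields $\dd{P}(L_{1}>n,L_{2}=0,J=k)\le Ce^{-\theta_{1}'n}$, forcing $\limsup_{n}\frac{1}{n}\log\dd{P}(L_{1}>n,L_{2}=0,J=k)\le-\theta_{1}'<-\tau_{1}$, which contradicts the exact coordinate decay rate $-\tau_{1}$ supplied by \thr{lower bound 1}.

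With $\sr{D}=\Gamma^{(2d)}_{\vc{\tau}}$ in hand, the upper bound $\limsup_{x}\frac{1}{x}\log\dd{P}(\br{\vc{L},\vc{c}}>x)\le-u^{*}(\vc{c})$, where $u^{*}(\vc{c})=\sup\{u\ge0:u\vc{c}\in\sr{D}\}$, is immediate from \thr{upper bound 1}. For the matching lower bound I split on which constraint defining $\Gamma^{(2d)}_{\vc{\tau}}$ binds at $u^{*}(\vc{c})\vc{c}$. In the \emph{coordinate-binding} case $u^{*}(\vc{c})=\tau_{i}/c_{i}$ for some $i$ with $c_{i}>0$, the trivial inclusion $c_{i}L_{i}\le\br{\vc{L},\vc{c}}$ yields $\dd{P}(\br{\vc{L},\vc{c}}>x)\ge\dd{P}(L_{i}>x/c_{i})$, and summing \thr{lower bound 1} over the finitely many background states gives $\liminf\ge-\tau_{i}/c_{i}=-u^{*}(\vc{c})$. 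In the \emph{interior-binding} case $u^{*}(\vc{c})<\tau_{i}/c_{i}$ for both $i$, the point $u^{*}(\vc{c})\vc{c}$ lies on the upper-right boundary of $\Gamma^{(2d)}_{+}$ and admits a supporting hyperplane with nonnegative outward normal $\vc{n}$; setting $\vc{c}'=\vc{n}/\br{\vc{n},\vc{c}}$ achieves $\br{\vc{c}',\vc{c}}=1$ and $\sup_{\vc{\theta}\in\Gamma^{(2d)}_{+}}\br{\vc{\theta},\vc{c}'}=u^{*}(\vc{c})$, so the crude inclusion $\{\vc{L}>x\vc{c}'\}\subset\{\br{\vc{L},\vc{c}}>x\}$ combined with \lem{lower bound 2} delivers $\liminf\ge-u^{*}(\vc{c})$.

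The main obstacle is the interior-binding case: exhibiting a nonnegative supporting normal at $u^{*}(\vc{c})\vc{c}$ and handling possible corners or nonsmooth points on $\partial\Gamma^{(2d)}_{+}$. These are addressable by standard support-function arguments from convex analysis, provided one first uses the strict inequalities $u^{*}(\vc{c})<\tau_{i}/c_{i}$ to locate the contact point on the northeastern portion of the boundary where a first-quadrant normal is guaranteed to exist.
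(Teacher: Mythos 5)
Your proof is correct and follows essentially the same three-part strategy as the paper's: inherit the upper bound from \thr{upper bound 1}, use \thr{lower bound 1} when a coordinate constraint $\theta_i = \tau_i$ binds, and use \lem{lower bound 2} when the ray $u\vc{c}$ exits through the curved boundary $\partial\Gamma^{(2d)}_{+}$; your ``coordinate-binding'' vs.\ ``interior-binding'' dichotomy is exactly the paper's split between $\vc{\theta}(\vc{c})$ touching $\vc{\tau}$ and $\vc{\theta}(\vc{c}) < \vc{\tau}$. Where you go beyond the paper's terse write-up is in two places worth noting. First, you prove the domain identity \eqn{domain 1} explicitly: the inclusion $\Gamma^{(2d)}_{\vc{\tau}}\subset\sr{D}$ from \lem{domain 0}, the inclusion $\sr{D}\subset\Gamma^{(2d)}_{\max}$ from the divergence assertion in \lem{lower bound 2}, and the inclusion $\sr{D}\subset\{\vc{\theta}<\vc{\tau}\}$ via Markov's inequality against the exact coordinate decay rate of \thr{lower bound 1}; the paper asserts \eqn{domain 1} in the theorem but leaves its verification implicit, and the bound ``$\vc{\theta}(\vc{c})\le\vc{\tau}$'' it invokes really rests on your step. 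Second, you make explicit the renormalization $\vc{c}'=\vc{n}/\br{\vc{n},\vc{c}}$ needed to turn the componentwise bound $\dd{P}(\vc{L}>x\vc{c}')$ of \lem{lower bound 2} into a bound on $\dd{P}(\br{\vc{L},\vc{c}}>x)$ with exponent exactly $u^*(\vc{c})$; the paper simply cites the lemma at this step without explaining how the supremum $\sup\{\br{\vc{\theta},\vc{c}'};\vc{\theta}\in\Gamma^{(2d)}_{+}\}$ is made to equal $u^*(\vc{c})$. The only loose end you flag, namely that the outward normal $\vc{n}$ at $u^*(\vc{c})\vc{c}$ is nonnegative with $\br{\vc{n},\vc{c}}>0$, does need a line of justification (it comes from $\Gamma^{(2d)}_{\max}$ being convex and downward-closed and the contact point lying strictly inside the box $\{\vc{\theta}<\vc{\tau}\}$), but it is not a gap in principle. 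In short: same route, more fully argued.
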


\begin{proof}
By \thr{upper bound 1}, we already have the upper bound of the tail probability for \eqn{decay rate 1}. To consider the lower bound, let
\begin{align*}
  u_{\vc{c}} = \sup \{u \ge 0; u \vc{c} \in \sr{D}\}, \qquad \vc{\theta}(\vc{c}) = u_{\vc{c}} \vc{c}.
\end{align*}
Note that $\vc{\theta}(\vc{c}) \le \vc{\tau}$ by \thr{upper bound 1}. We first assume that $\vc{\theta}_{\vc{c}} < \vc{\tau}$. Then, by \thr{upper bound 1}, $\vc{\theta}(\vc{c})\in \partial \Gamma^{(2d)}_{+}$, and therefore \lem{lower bound 2} leads to
\begin{align}
\label{eqn:lower bound 3a}
    \liminf_{x \to \infty} \frac 1x \log \dd{P}(\br{\vc{L}, \vc{c}} > x, J=k) \ge - u_{\vc{c}} = -\sup \{u \ge 0; u \vc{c} \in \sr{D}\}.
\end{align}
Assume $\vc{\theta}_{\vc{c}} = \vc{\tau}$. In this case, by \thr{upper bound 1}, we have that $[\vc{\theta}(\vc{c})]_{1} = \tau_{1}$ or $[\vc{\theta}(\vc{c})]_{2} = \tau_{2}$, equivalently, $c_{1} u_{\vc{c}} = \tau_{1}$ or $c_{2} u_{\vc{c}} = \tau_{2}$. Since these two cases are symmetric, we only consider the case for $c_{1} u_{\vc{c}} = \tau_{1}$. By \thr{lower bound 1}, we have, for each fixed $\ell$ and $k$,
\begin{align*}
  \liminf_{n \to \infty} \frac 1n \log \dd{P}(c_{1} L_{1} > n, L_{2} = \ell, J = k) \ge - \frac {\tau_{1}} {c_{1}} = - u_{\vc{c}}.
\end{align*}
Since $c_{1} L_{1} + c_{2} L_{2} > n$ for $L_{2} = \ell$ implies that $c_{1} L_{1} > n - c_{2} \ell$, this yields
\begin{align*}
  \liminf_{n \to \infty} \frac 1n \log \dd{P}(\br{\vc{L}, \vc{c}} > x, J=k) \ge - u_{\vc{c}} = - \sup \{u \ge 0; u \vc{c} \in \sr{D}\}.
\end{align*}
Thus, the limit supremum and the limit infimum are identical, and we get \eqn{decay rate 1}.
\end{proof}

\section{Two node generalized Jackson network}
\setnewcounter
\label{sect:two node}

In this section, we consider a continuous time Markov chain $\{(\vc{L}(t),J(t))\}$ whose embedded transitions under uniformization constitute a discrete-time 2d-QBD process. We refer it as a continuous-time 2d-QBD process. This process is convenient in queueing applications because they are often of continuous time. Since the stationary distribution is unchanged under uniformization, its tail asymptotics are also unchanged. Thus, it is routine to convert the asymptotic results obtained for the discrete-time 2d-QBD process to those for $\{(\vc{L}(t),J(t))\}$. We summarize them for convenience of application.

\subsection{Continuous time formulation of a 2d-QBD process}
\label{sect:continuous formulation}

As discussed above, we define a continuous time 2d-QBD process $\{(\vc{L}(t),J(t))\}$ by changing $P^{(1)}$ (or $P^{(2)}$) to a transition rate matrix. Denote it by $\tilde{P}^{(1)}$ (or $\tilde{P}^{(2)}$). That is, $\tilde{P}^{(i)}$ has the same block structure as that of $P^{(i)}$ while $\tilde{P}^{(i)} \vc{1} = \vc{0}$ and all its diagonal entries are not positive. In what follows, continuous time characteristics are indicated by tilde except for those concerning the stationary distribution because the stationary distribution is unchanged. Among them, it is notable that $I - A^{(i)}_{00}$ and $I - A_{00}$ are replaced by $-\tilde{A}^{(i)}_{00}$ and $-\tilde{A}_{00}$, respectively, while $A^{(i)}_{ij}$ and $A_{ij}$ are replaced by $\tilde{A}^{(i)}_{ij}$ and $\tilde{A}_{ij}$ for $(i,j) \ne (0,0)$. Similarly, $A^{(i)}_{jk}$, $A_{**}(\vc{\theta})$ and $C^{(i)}_{**}(\vc{\theta})$ are defined. For example, $I - A^{(1)}_{*0}(\theta_{1})$ is replaced by $- \tilde{A}^{(1)}_{*0}(\theta_{1})$, and therefore
\begin{align*}
  \tilde{C}^{(1)}_{**}(\vc{\theta}) = \tilde{A}_{*+}(\vc{\theta}) + \tilde{A}^{(1)}_{*(-1)}(\theta_{1}) (- \tilde{A}^{(1)}_{*0}(\theta_{1}))^{-1} \tilde{A}^{(1)}_{*1}(\theta_{1}),
\end{align*}
as long as $(- \tilde{A}^{(1)}_{*0}(\theta_{1}))^{-1}$ exists and is nonnegative. $\tilde{C}^{(2)}_{**}(\vc{\theta})$ is similarly defined.

Suppose that we start with the continuous time 2d-QBD process with primitive data $\tilde{A}^{(i)}_{jk}$. These data must satisfy
\begin{align*}
  \tilde{A}_{**}(\vc{0}) \vc{1} = \vc{0}, \qquad \tilde{C}^{(i)}_{**}(\vc{0}) \vc{1} = \vc{0}, \quad i=1,2,
\end{align*}
because of the continuous time settings. Since the condition for the existence of a superharmonic vector $\vc{h}$ for $A_{**}(\vc{\theta})$ is changed to $\tilde{A}_{**}(\vc{\theta}) \vc{h} \le \vc{0}$, we define the following sets.
\begin{align}
\label{eqn:Gamma 2d+}
 & \tilde{\Gamma}^{(2d)}_{+} = \{ \vc{\theta} \in \dd{R}^{2}; \exists \vc{h} > \vc{0}, \tilde{A}_{**}(\vc{\theta}) \vc{h} \le \vc{0} \},\\
\label{eqn:Gamma 2dmax}
 & \tilde{\Gamma}^{(2d)}_{\max} = \{ \vc{\theta} \in \dd{R}^{2}; \exists \vc{\theta}' > \vc{\theta}, \vc{\theta}' \in \tilde{\Gamma}^{(2d)}_{1}(\tilde{A}_{**}) \},\\
\label{eqn:Gamma 2di}
 & \tilde{\Gamma}^{(2d)}_{i+} = \{ \vc{\theta} \in \dd{R}^{2}; \exists \vc{h} > \vc{0}, \tilde{A}_{**}(\vc{\theta}) \vc{h} \le \vc{0}, \tilde{C}^{(i)}_{**}(\vc{\theta}) \vc{h} \le \vc{0}\}, \quad s=1,2.
\end{align}
The following auxiliary notation will be convenient.
\begin{align}
\label{eqn:Gamma 2ei}
  \tilde{\Gamma}^{(2e)}_{i+} = \{ \vc{\theta} \in \dd{R}^{2}; \exists \vc{h} > \vc{0}, \tilde{A}_{**}(\vc{\theta}) \vc{h} = \vc{0}, \tilde{C}^{(i)}_{**}(\vc{\theta}) \vc{h} \le \vc{0}\}, \quad i=1,2.
\end{align}
Using these notation, we define
\begin{align*}
  \tilde{\vc{\theta}}^{(i,\cp)} = \arg_{\vc{\theta} \in \dd{R}^{2}} \sup\{ \theta_{i} \ge 0; \vc{\theta} \in \tilde{\Gamma}^{(2d)}_{i+} \} ,\quad
  \tilde{\vc{\theta}}^{(i,\max)} = \arg_{\vc{\theta} \in \dd{R}^{2}} \sup\{ \theta_{i}; \vc{\theta} \in \tilde{\Gamma}^{(2d)}_{+} \}.
\end{align*}
and define the vector $\tilde{\vc{\tau}}$ by
\begin{align}
\label{eqn:c tau def 1}
  \tilde{\tau}_{1} = \sup\{\theta_{1} \in \dd{R}; \vc{\theta} \in \tilde{\Gamma}^{(2d)}_{1+}; \theta_{2} < \tilde{\theta}^{(2,\cp)}_{2} \},\quad
  \tilde{\tau}_{2} = \sup\{\theta_{2} \in \dd{R}; \vc{\theta} \in \tilde{\Gamma}^{(2d)}_{2+}; \theta_{1} < \tilde{\theta}^{(1,\cp)}_{1} \}.
\end{align}
\vspace{-4ex}
\begin{remark}
\label{rem:Gamma 2ei}
 In the definition \eqn{c tau def 1}, we can replace $\tilde{\Gamma}^{(2d)}_{i+}$ by $\tilde{\Gamma}^{(2e)}_{i+}$ because $\tilde{\Gamma}^{(2d)}_{+}$ and $\{ \vc{\theta} \in \dd{R}^{2}; \tilde{C}^{(i)}_{**}(\vc{\theta}) \vc{h} \le \vc{0} \}$ are closed convex sets.
\end{remark}

We also need
\begin{align}
\label{eqn:c tau max}
  \tilde{\Gamma}_{\tilde{\vc{\tau}}} = \{ \vc{\theta} \in \tilde{\Gamma}^{(2d)}_{\max}(\tilde{A}); \vc{\theta} < \tilde{\vc{\tau}} \}.
\end{align}
Let $\tilde{\gamma}^{\tilde{A}_{**}}(\vc{\theta})$ be the Perron-Frobenius eigenvalue of $\tilde{A}_{**}(\vc{\theta})$. A continuous time version of \ass{key assumption 2} is given by

\begin{assumption} {\rm
\label{ass:key assumption 3}
For each $\vc{\theta} \in \dd{R}^{2}$ satisfying that $\gamma^{(\tilde{A}_{**})}(\vc{\theta}) = 1$, for each $i=1,2$, there is an $m_{0}$-dimensional positive vector $\tilde{\vc{h}}^{(i0)}(\vc{\theta})$ and functions $\tilde{c}^{(i)}_{0}(\vc{\theta})$ and $\tilde{c}^{(i)}_{1}(\vc{\theta})$ such that one of $\tilde{c}^{(i)}_{0}(\vc{\theta})$ or $\tilde{c}^{(i)}_{1}(\vc{\theta})$ vanishes, and, for $i=1$,
\begin{align}
\label{eqn:key assumption 3a}
 & \tilde{A}^{(+0)}_{*0}(\theta_{1}) \tilde{\vc{h}}^{(10)}(\vc{\theta}) + e^{\theta_{2}} \tilde{A}^{(+0)}_{*1}(\theta_{1}) \tilde{\vc{h}}^{(A_{**})}(\vc{\theta}) = \tilde{c}^{(1)}_{0}(\vc{\theta}) \tilde{\vc{h}}^{(10)}(\vc{\theta}),\\
\label{eqn:key assumption 3b}
 & e^{-\theta_{2}} \tilde{A}^{(+1)}_{*(-1)}(\theta_{1}) \tilde{\vc{h}}^{(10)}(\vc{\theta}) + \tilde{A}_{*+}(\theta_{1}) \tilde{\vc{h}}^{(\tilde{A}_{**})}(\vc{\theta}) = \tilde{c}^{(1)}_{1}(\vc{\theta}) \tilde{\vc{h}}^{(\tilde{A}_{**})}(\vc{\theta}), \qquad
\end{align}
and, for $i=2$,
\begin{align}
\label{eqn:key assumption 3c}
 & \tilde{A}^{(0+)}_{0*}(\theta_{2}) \tilde{\vc{h}}^{(20)}(\vc{\theta}) + e^{\theta_{1}} \tilde{A}^{(0+)}_{1*}(\theta_{2}) \tilde{\vc{h}}^{(A_{**})}(\vc{\theta}) = \tilde{c}^{(2)}_{0}(\vc{\theta}) \tilde{\vc{h}}^{(20)}(\vc{\theta}),\\
\label{eqn:key assumption 3d}
 & e^{-\theta_{1}} \tilde{A}^{(1+)}_{(-1)*}(\theta_{2}) \tilde{\vc{h}}^{(20)}(\vc{\theta}) + \tilde{A}_{+*}(\theta_{2}) \tilde{\vc{h}}^{(\tilde{A}_{**})}(\vc{\theta}) = \tilde{c}^{(2)}_{1}(\vc{\theta}) \tilde{\vc{h}}^{(\tilde{A}_{**})}(\vc{\theta}). \qquad
\end{align}
We recall that $\tilde{\vc{h}}^{(\tilde{A}_{**})}(\vc{\theta})$ is the Perron-Frobenius eigenvector of $\tilde{A}_{**}(\vc{\theta})$.
}\end{assumption}

Define the domain for the stationary distribution of $\vc{L}$ as
\begin{align}
\label{eqn:domain c time}
  \sr{D} = \mbox{the interior of } \{\vc{\theta} \in \dd{R}^{2}; \dd{E}(e^{\br{\vc{\theta},\vc{L}}}) < \infty\},
\end{align}
where $\vc{L}$ is a random vector subject to the stationary distribution of $\vc{L}(t)$. It is easy to see that Theorems \thrt{upper bound 1} and \thrt{decay rate 1} can be combined and converted into the following continuous version.

\begin{theorem}
\label{thr:decay rate 2}
  For a continuous-time 2d-QBD process satisfying the irreducibility and stability conditions, $\tilde{\Gamma}_{\tilde{\vc{\tau}}} \subset \sr{D}$ and we have
\begin{align}
\label{eqn:decay rate 3}
  \lim_{x \to \infty} \frac 1x \log \dd{P}(\br{\vc{L}, \vc{c}} > x) \le - \sup \{u \ge 0; u\vc{c} \in \tilde{\Gamma}_{\tilde{\vc{\tau}}}\},
\end{align}
and this inequality becomes equality with $\sr{D} = \tilde{\Gamma}_{\tilde{\vc{\tau}}}$ if \ass{key assumption 3} is satisfied.
\end{theorem}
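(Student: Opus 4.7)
The plan is to reduce Theorem 4.2 to the discrete-time results \thr{upper bound 1} and \thr{decay rate 1} by uniformization, exactly as suggested in the paragraph preceding the theorem. First I would pick a uniformization constant $\lambda > 0$ strictly larger than $\max_{s,s',k} |[\tilde{A}^{(ss')}_{00}]_{kk}|$ and define a discrete-time 2d-QBD process by $A^{(ss')}_{jk} = \lambda^{-1} \tilde{A}^{(ss')}_{jk}$ for $(j,k) \ne (0,0)$ and $A^{(ss')}_{00} = I + \lambda^{-1} \tilde{A}^{(ss')}_{00}$. Standard uniformization theory then gives that the discrete-time chain inherits irreducibility, stability, and (crucially) the stationary distribution of $\vc{L}$, so $\sr{D}$ as defined in \eqn{domain c time} agrees with the convergence domain of the moment generating function of the embedded chain.

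The second step is to verify that the sets and assumptions transfer correctly under this uniformization. From $A_{**}(\vc{\theta}) = I + \lambda^{-1} \tilde{A}_{**}(\vc{\theta})$, the inequality $A_{**}(\vc{\theta}) \vc{h} \le \vc{h}$ is equivalent to $\tilde{A}_{**}(\vc{\theta}) \vc{h} \le \vc{0}$, so $\Gamma^{(2d)}_{+} = \tilde{\Gamma}^{(2d)}_{+}$ and $\Gamma^{(2d)}_{\max} = \tilde{\Gamma}^{(2d)}_{\max}$. For the conditioning matrix, a direct computation using $(I - A^{(+0)}_{*0}(\theta_{1}))^{-1} = \lambda (-\tilde{A}^{(+0)}_{*0}(\theta_{1}))^{-1}$ shows that the $\lambda^{-2}$ prefactor from the two outer matrices exactly cancels this $\lambda$, yielding the clean identity $C^{(i)}_{**}(\vc{\theta}) = I + \lambda^{-1} \tilde{C}^{(i)}_{**}(\vc{\theta})$. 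Hence $\Gamma^{(2d)}_{i+} = \tilde{\Gamma}^{(2d)}_{i+}$, and consequently $\vc{\theta}^{(i,\cp)} = \tilde{\vc{\theta}}^{(i,\cp)}$, $\vc{\theta}^{(i,\max)} = \tilde{\vc{\theta}}^{(i,\max)}$, $\vc{\tau} = \tilde{\vc{\tau}}$, and $\Gamma^{(2d)}_{\vc{\tau}} = \tilde{\Gamma}_{\tilde{\vc{\tau}}}$. Writing $c^{(i)}_{j}(\vc{\theta}) = 1 + \lambda^{-1} \tilde{c}^{(i)}_{j}(\vc{\theta})$ in \ass{key assumption 1} shows that \ass{key assumption 2} for the uniformized chain coincides with \ass{key assumption 3}, since ``one of $c^{(i)}_{0}, c^{(i)}_{1}$ equals one'' becomes ``one of $\tilde{c}^{(i)}_{0}, \tilde{c}^{(i)}_{1}$ vanishes.''

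Once these correspondences are in place, the proof is immediate. \thr{upper bound 1} applied to the discrete chain gives $\Gamma^{(2d)}_{\vc{\tau}} \subset \sr{D}$ together with the upper bound \eqn{upper bound 1}, which translate directly into $\tilde{\Gamma}_{\tilde{\vc{\tau}}} \subset \sr{D}$ and \eqn{decay rate 3}. Under \ass{key assumption 3}, \ass{key assumption 2} holds for the uniformized chain, and \thr{decay rate 1} furnishes the matching lower bound together with $\sr{D} = \Gamma^{(2d)}_{\vc{\tau}} = \tilde{\Gamma}_{\tilde{\vc{\tau}}}$. The only nontrivial point in this transfer is the algebraic identity $C^{(i)}_{**} = I + \lambda^{-1} \tilde{C}^{(i)}_{**}$, which at first glance looks suspicious because the Schur-complement terms mix different powers of $\lambda^{-1}$; once one sees that the inversion $(I - A^{(+0)}_{*0})^{-1}$ contributes exactly the compensating factor of $\lambda$, the rest of the argument is a routine bookkeeping exercise and no further machinery is required.
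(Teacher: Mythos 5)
Your proposal is correct and takes exactly the approach the paper intends: the paper dismisses the proof with "It is easy to see that Theorems \ref{thr:upper bound 1} and \ref{thr:decay rate 1} can be combined and converted into the following continuous version," whereas you have filled in the uniformization bookkeeping, including the key identity $C^{(i)}_{**}(\vc{\theta}) = I + \lambda^{-1}\tilde{C}^{(i)}_{**}(\vc{\theta})$ (correctly tracking the cancellation of $\lambda$-factors in the Schur-complement term) and the translation of Assumption~\ref{ass:key assumption 3} into Assumption~\ref{ass:key assumption 2} via $c^{(i)}_{j}(\vc{\theta}) = 1 + \lambda^{-1}\tilde{c}^{(i)}_{j}(\vc{\theta})$.
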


\subsection{Two node generalized Jackson network with MAP arrivals and PH-service time distributions}
\label{sect:Jackson}

 As an example of the 2d-QBD process, we consider a two node generalized Jackson network with a MAP arrival process and phase type service time distributions. Obviously, this model can be formulated as a 2d-QBD process. We are interested to see how exogenous arrival processes and service time distributions influence the decay rates. This question has been partially answered for the tail decay rates of the marginal distributions of tandem queues with stationary or renewal inputs (e.g. see \cite{BertPascTsit1998,GaneAnan1996}). They basically use the technique for sample path large deviations, and no joint distributions has been studied for queue lengths at multiple nodes. For Markov modulated arrivals and more general network topologies, there is seminal work by Takahashi and his colleagues \cite{FujiTaka1996,FujiTakaMaki1998,KatoMakiTaka2004,KatoMakiTaka2008}. They started with numerical examinations and finally arrived at upper bounds for the stationary tail probabilities for the present generalized Jackson network in \cite{KatoMakiTaka2008}. The author \cite{Miya2003} conjectured the tail decay rates of the stationary distribution for a $d$-node generalized Jackson network with $d \ge 2$ and renewal arrivals.

Thus, the question has not yet been satisfactorily answered particularly for a network with feedback routes. This motivates us to study the present decay rate problem. As we will see, the answer is relatively simple, and naturally generalizes the tandem queue case. However, first we have to introduce yet more notation to describe the generalized Jackson network. This network has two nodes, which are numbered as $1$ and $2$. We make the following modeling assumptions.
\begin{itemize}
\item [(\sect{two node}a)] A customer which completes service at node $i$ goes to node $j$ with probability $r_{ij}$ or leaves the network with probability $1 - r_{ij}$ for $(i,j) = (1,2)$ or $(2,1)$, where $r_{12} + r_{21} > 0$ and $r_{12} r_{21} < 1$, which exclude obvious cases. This routing of customers is assumed to be independent of everything else.
\item [(\sect{two node}b)] Exogenous customers arrive at node $i$ subject to the Markovian arrival process with generator ${T}_{i}+{U}_{i}$, where ${U}_{i}$ generates arrivals. Here, ${T}_{i}$ and ${U}_{i}$ are finite square matrices of the same size for each $i=1,2$.
\item [(\sect{two node}c)] Node $i$ has a single server, whose service times are independently and identically distributed subject to a phase type distribution with $(\vc{\beta}_{i}, {S}_{i})$, where $\vc{\beta}_{i}$ is the row vector representing the initial phase distribution and ${S}_{i}$ is a transition rate matrix for internal state transitions. Here, ${S}_{i}$ is a finite square matrix, and $\vc{\beta}_{i}$ has the same dimension as that of ${S}_{i}$ for each $i=1,2$.
\end{itemize}

Let ${D}_{i} = (-{S}_{i} \vc{1}) \vc{\beta}_{i}$, then ${S}_{i}+{D}_{i}$ is a generator for a continuous time Markov chain which generates completion of service times with rate ${D}_{i}$. Since the service time distribution at node $i$ has the phase type distribution with $(\vc{\beta}_{i}, {S}_{i})$, its moment generating function $g_{i}$ of  is given by
\begin{align}
\label{eqn:g i}
  g_{i}(\theta) = \br{\vc{\beta}_{i}, (-\theta I_{i+2} - {S}_{i})^{-1} (-{S}_{i} \vc{1})}, \qquad i=1,2,
\end{align}
as long as $\theta I_{i+2} + {S}_{i}$ is non-singular (e.g., see \cite{LatoRama1999} in which the Laplace transform is used instead of the moment generating function).  Clearly, $g_{i}(\theta)$ is a increasing function of $\theta$ from $(-\infty, \theta_{0i})$ to $(0, \infty)$, where $- \theta_{0i}$ is the Perron-Frobenius eigenvalue of $S_{i}$.

Let $L_{i}(t)$, $J_{ia}(t)$ and $J_{ib}(t)$ be the number of customers at node $i$, the background state for arrivals and the phase of service in progress, respectively, at time $t$, where $J_{ib}(t)$ is undefined if there is no customer in node $i$ at time $t$. Then, it is not hard to see that $\{(\vc{L}(t), \vc{J}(t)); t \ge 0\}$ is a continuous-time Markov chain and considered as a 2d-QBD process, where $\vc{L}(t) = (L_{1}(t), L_{2}(t))$ and $\vc{J}(t) = (J_{1a}(t), J_{2a}(t), J_{1b}(t), J_{2b}(t))$, where $J_{ib}(t)$ is removed from the components of $\vc{J}(t)$ if it is undefined.

We first note the stability condition for this 2d-QBD process. Since, for node $i$, the mean exogenous arrival rate $\lambda_{i}$ and the mean service rate $\mu_{i}$ are given by
\begin{align*}
  \lambda_{i} = \br{\vc{\nu}_{i}, U_{i} \vc{1}_{i}} \qquad \mu_{i} = \br{\vc{\beta}_{i}, (-S_{i})^{-1} \vc{1}_{i}},
\end{align*}
where $\vc{\nu}_{i}$ is the stationary distribution of the Markov chain with generator $T_{i} + U_{i}$, it is well known that the stability condition is given by
\begin{align}
\label{eqn:stability of GJ}
  \rho_{i} \equiv \frac {\lambda_{i} + \lambda_{3-i} r_{(3-i)i}} {(1-r_{12} r_{21}) \mu_{i}} < 1, \qquad i=1,2.
\end{align}
We assume this condition throughout in \sectn{Jackson}.

We next introduce point processes to count arriving and departing customers from each node. By $N^{(a)}_{i}(t)$, we denote the number of exogenous arriving customers at node $i$ during the time interval $[0,t]$. Then, it follows from (\sect{two node}b) (also the comment above \eqn{stability of GJ}) that
\begin{align*}
  \dd{E}(e^{\theta N^{(a)}_{i}(t)}1(J(t)=k)|J(0)=j) = \left[\exp(t({T}_{i} + e^{\theta} {U}_{i}))\right]_{jk}.
\end{align*}
We define a time-average cumulant moment generating function $\gamma^{(ia)}$ as
\begin{align}
\label{eqn:gamma ia}
  \gamma^{(ia)}(\theta) = \lim_{t \to \infty} \frac 1t \log \dd{E}(e^{\theta N^{(a)}_{i}(t)}), \qquad i=1,2.
\end{align}
It is not hard to see that $\gamma^{(ia)}(\theta)$ is the Perron-Frobenius eigenvalue of ${T}_{i} + e^{\theta} {U}_{i}$.

By $N^{(d)}_{i}(t)$, we denote the number of departing customers from node $i$ during the time interval $[0,t]$ when the server at node $i$ is always busy in this time interval. Let $\Phi_{i}(n)$ be the number of customers who are routed to node $3-i$ among $n$ customers departing from node $i$. Obviously, it follows from (\sect{two node}a) that $\Phi_{i}(n)$ is independent of $N^{(d)}_{i}(t)$, and has the Bernoulli distribution with parameter $(n, r_{i(3-i)})$. Then,
\begin{align*}
  \lefteqn{\dd{E}(e^{-\theta_{i} N^{(d)}_{i}(t) + \theta_{3-i} \Phi_{i}(N^{(d)}_{i}(t))}1(J(t)=k)|J(0)=j)} \hspace{15ex}\\
  & = \left[\exp(t({S}_{i} + e^{-\theta_{i}} (r_{i0} + e^{\theta_{3-i}} r_{i(3-i)}) {D}_{i})\right]_{jk},
\end{align*}
where $r_{i0} = 1 - r_{i(3-i)}$. Similar to $\gamma^{(ia)}$, we define a time-average cumulant moment generating function $\gamma^{(id)}$ by
\begin{align*}
  \gamma^{(id)}(\vc{\theta}) = \lim_{t \to \infty} \frac 1t \log \dd{E}(e^{-\theta_{i} N^{(d)}_{i}(t) + \theta_{3-i} \Phi_{i}(N^{(d)}_{i}(t))}), \qquad \vc{\theta} = (\theta_{1}, \theta_{2}), \; i=1,2.
\end{align*}
One can see that $\gamma^{(id)}(\vc{\theta})$ is the Perron-Frobenius eigenvalue of ${S}_{i} + e^{-\theta_{i}} (r_{i0} + e^{\theta_{3-i}} r_{i(3-i)}) {D}_{i}$.

One may expect that the decay rates for the generalized Jackson network are completely determined by the cumulants $\gamma^{(1a)}, \gamma^{(2a)}, \gamma^{(1d)}, \gamma^{(2d)}$ since their conjugates are known to be rate functions for the Cram\'{e}r type of large deviations. We will show that this is indeed the case. Let
\begin{align*}
 & \gamma^{(+)}(\vc{\theta}) = \gamma^{(1a)}(\theta_{1}) + \gamma^{(2a)}(\theta_{2}) + \gamma^{(1d)}(\vc{\theta}) + \gamma^{(2d)}(\vc{\theta}),\\
 & \gamma^{(i)}(\vc{\theta}) = \gamma^{(1a)}(\theta_{1}) + \gamma^{(2a)}(\theta_{2}) + \gamma^{(id)}(\vc{\theta}), \qquad i=1,2.
\end{align*}
We then have the following result.

\begin{theorem}
\label{thr:decay rate Jackson}
  For the generalized Jackson network satisfying conditions (\sect{two node}a) (\sect{two node}b) and (\sect{two node}c), if the stability condition \eqn{stability of GJ} holds, then \ass{key assumption 3} is satisfied, and we have
\begin{align}
\label{eqn:G-Jackson domain 1}
& \tilde{\Gamma}^{2d}_{+} = \{ \vc{\theta} \in \dd{R}^{2}; \gamma^{(+)}(\vc{\theta}) \le 0 \}, \\
\label{eqn:G-Jackson domain 2}
& \tilde{\Gamma}^{2e}_{i+} = \{ \vc{\theta} \in \dd{R}^{2}; \gamma^{(+)}(\vc{\theta}) = 0, \gamma^{(i)}(\vc{\theta}) \le 0 \}, \quad i=1,2. \quad
\end{align}
Define $\tilde{\tau}$ and $\tilde{\Gamma}_{\tilde{\tau}}$ by \eqn{c tau def 1} and \eqn{c tau max}, then the domain $\sr{D}$ for $\vc{L}$ is given by $\tilde{\Gamma}_{\tilde{\vc{\tau}}}$ and, for non-zero vector $\vc{c} \ge \vc{0}$,
\begin{align}
\label{eqn:decay rate Jackson 1}
  \lim_{x \to \infty} \frac 1x \log \dd{P}(\br{\vc{L}, \vc{c}} > x) = - \sup \{u \ge 0; u\vc{c} \in \tilde{\Gamma}_{\tilde{\vc{\tau}}}\},
\end{align}
where $\vc{L}$ is a random vector subject to the stationary distribution of $\vc{L}(t)$.
\end{theorem}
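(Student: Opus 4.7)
The plan is to exploit the conditional independence of the four Markov sources -- two MAP arrival processes $(T_i+U_i)$ acting on $J_{ia}$ and two PH service processes $(S_i,\vc{\beta}_i)$ acting on $J_{ib}$ -- given the queue lengths. This makes every matrix MGF in play a Kronecker sum and the associated Perron--Frobenius eigenvectors Kronecker products, so that \eqn{G-Jackson domain 1} and \eqn{G-Jackson domain 2} both reduce to the classical identity $\gamma_{\rs{pf}}(A\oplus B)=\gamma_{\rs{pf}}(A)+\gamma_{\rs{pf}}(B)$. Once these explicit characterizations and \ass{key assumption 3} are in hand, \thr{decay rate 2} delivers \eqn{decay rate Jackson 1} at once.

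First, I would build $\tilde{A}_{**}(\vc{\theta})$ on $\sr{U}_{++}$ directly from the primitives (\sect{two node}a)--(\sect{two node}c). Because every infinitesimal transition is attributable to exactly one of the four sources and the sources are conditionally independent given $\vc{L}$, the weighted generator decomposes as
\begin{align*}
  \tilde{A}_{**}(\vc{\theta}) = (T_1+e^{\theta_1}U_1) \oplus (T_2+e^{\theta_2}U_2) \oplus M_{1d}(\vc{\theta}) \oplus M_{2d}(\vc{\theta}),
\end{align*}
where $M_{id}(\vc{\theta}) = S_i + e^{-\theta_i}(r_{i0}+r_{i(3-i)}e^{\theta_{3-i}})D_i$ absorbs the PH restart encoded in $D_i=(-S_i\vc{1})\vc{\beta}_i$. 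The four summands have PF eigenvalues $\gamma^{(1a)}(\theta_1)$, $\gamma^{(2a)}(\theta_2)$, $\gamma^{(1d)}(\vc{\theta})$, $\gamma^{(2d)}(\vc{\theta})$, respectively, so $\gamma^{(\tilde{A}_{**})}(\vc{\theta})=\gamma^{(+)}(\vc{\theta})$ with PF eigenvector
\begin{align*}
  \tilde{\vc{h}}^{(\tilde{A}_{**})}(\vc{\theta}) = \vc{h}^{(1a)}(\theta_1)\otimes\vc{h}^{(2a)}(\theta_2)\otimes\vc{h}^{(1d)}(\vc{\theta})\otimes\vc{h}^{(2d)}(\vc{\theta}),
\end{align*}
which gives \eqn{G-Jackson domain 1}.

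Second, to handle $\tilde{C}^{(1)}_{**}(\vc{\theta})$ and verify \ass{key assumption 3} for $i=1$ (the case $i=2$ being symmetric), I would exploit that $J_{2b}$ is absent on $\sr{U}_{+0}$ and that every transition from $\sr{U}_{+0}$ into $\sr{U}_{++}$ initializes $J_{2b}$ with distribution $\vc{\beta}_2$; this endows the crossing blocks with a rank-one right factor $\vc{\beta}_2$ on the $J_{2b}$ coordinate. Taking the boundary ansatz
\begin{align*}
  \tilde{\vc{h}}^{(10)}(\vc{\theta}) = \vc{h}^{(1a)}(\theta_1)\otimes\vc{h}^{(2a)}(\theta_2)\otimes\vc{h}^{(1d)}(\vc{\theta}),
\end{align*}
normalized so that $\br{\vc{\beta}_2,\vc{h}^{(2d)}(\vc{\theta})}=1$, the rank-one restart forces $\tilde{A}^{(+0)}_{*1}(\theta_1)\tilde{\vc{h}}^{(\tilde{A}_{**})}(\vc{\theta})$ to collapse to a Kronecker product in $\sr{V}_1$, and a direct computation then shows that \eqn{key assumption 3a}--\eqn{key assumption 3b} hold with $\tilde{c}^{(1)}_0(\vc{\theta})=\gamma^{(1)}(\vc{\theta})$ and $\tilde{c}^{(1)}_1(\vc{\theta})=\gamma^{(+)}(\vc{\theta})$, the latter vanishing exactly on the boundary of $\tilde{\Gamma}^{(2d)}_+$ as required. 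Feeding the same collapse into the censored formula $\tilde{C}^{(1)}_{**}(\vc{\theta}) = \tilde{A}_{*+}(\vc{\theta}) + \tilde{A}^{(+1)}_{*(-1)}(\theta_1)(-\tilde{A}^{(+0)}_{*0}(\theta_1))^{-1}\tilde{A}^{(+0)}_{*1}(\theta_1)$ identifies its PF eigenvalue at the tensor ansatz as $\gamma^{(1)}(\vc{\theta})$ on $\{\gamma^{(+)}(\vc{\theta})=0\}$, giving \eqn{G-Jackson domain 2}.

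With \ass{key assumption 3} verified and the explicit characterizations of $\tilde{\Gamma}^{(2d)}_+$ and $\tilde{\Gamma}^{(2e)}_{i+}$ in hand, \thr{decay rate 2} immediately yields $\sr{D}=\tilde{\Gamma}_{\tilde{\vc{\tau}}}$ and the tail decay formula \eqn{decay rate Jackson 1}. The main obstacle is the Kronecker collapse in the third paragraph: one must verify that the re-initialization $\vc{\beta}_{3-i}$ of $J_{(3-i)b}$ interacts with the censoring inverse $(-\tilde{A}^{(+0)}_{*0}(\theta_1))^{-1}$ on Kronecker-product vectors by reducing the $J_{(3-i)b}$ factor to the scalar $\br{\vc{\beta}_{3-i},\vc{h}^{((3-i)d)}(\vc{\theta})}=1$ while leaving the other three tensor factors intact. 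Once this is confirmed, the remaining identifications are routine Kronecker arithmetic, with the stability condition \eqn{stability of GJ} ensuring that the required inverses exist and that the level sets are non-degenerate.
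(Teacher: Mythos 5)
Your proposal follows essentially the same route as the paper: build $\tilde{A}_{**}(\vc{\theta})$ as a Kronecker sum of the four independent sources, read off $\gamma^{(+)}(\vc{\theta})$ as the Perron--Frobenius eigenvalue with tensor-product eigenvector, take the boundary ansatz $\tilde{\vc{h}}^{(10)}(\vc{\theta})$ proportional to $\vc{h}^{(1a)}(\theta_{1})\otimes\vc{h}^{(2a)}(\theta_{2})\otimes\vc{h}^{(1d)}(\vc{\theta})$ (the paper normalizes $\br{\vc{\beta}_{2},\vc{h}^{(2d)}(\vc{\theta})}=t_{2}(\vc{\theta})^{-1}$ and sets the proportionality constant to that inner product, whereas you normalize the inner product to $1$, which is an immaterial rescaling), and thereby verify \ass{key assumption 3} with $\tilde{c}^{(1)}_{0}=\gamma^{(1)}$ and $\tilde{c}^{(1)}_{1}=\gamma^{(+)}=0$. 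With the characterizations of $\tilde{\Gamma}^{(2d)}_{+}$ and $\tilde{\Gamma}^{(2e)}_{i+}$ in hand, invoking \thr{decay rate 2} is exactly how the paper concludes.
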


\begin{remark} {\rm
\label{rem:decay rate Jackson 1}
As we will show at the end of \sectn{proof of Jackson}, the condition that $\gamma^{(i)}(\vc{\theta}) \le 0$ in \eqn{G-Jackson domain 2} can be replaced by $e^{-\theta_{3-i}} (r_{(3-i)0} + e^{\theta_{i}} r_{(3-i)i}) \ge 1$.
}\end{remark}

\begin{remark} {\rm
\label{rem:decay rate Jackson 2}
For $\vc{c} = (1,0), (0,1)$, \citet{KatoMakiTaka2004} obtained the right-hand side of \eqn{decay rate Jackson 1} as an upper bound for its left-hand side (see Theorem 4.1 there). Namely, they derived the inequality \eqn{decay rate 3}, which is conjectured to be tight in \cite{Miya2003}. \thr{decay rate Jackson} shows that those upper bounds are indeed tight. Based on the results in \cite{KatoMakiTaka2004}, \citet{KatoMakiTaka2008} derived upper bounds for the decay rate of the probability $\dd{P}(\vc{L} = n \vc{c} + \vc{d})$ for positive vectors $\vc{c}, \vc{d}$ with integer entries as $n \to \infty$, and numerically examined their tightness. This asymptotic is different from that in \eqn{decay rate Jackson 1}, so we can not confirm its tightness by \eqn{decay rate Jackson 1}, but conjecture it to be true since similar asymptotics are known for a two dimensional semimartingale reflecting Brownian motion (see \cite{AvraDaiHase2001,DaiMiya2013}).
}\end{remark}

  See \fig{domain Jackson} to see how the domain looks like.
\begin{figure}[h]
 	\centering
	\includegraphics[height=6.7cm]{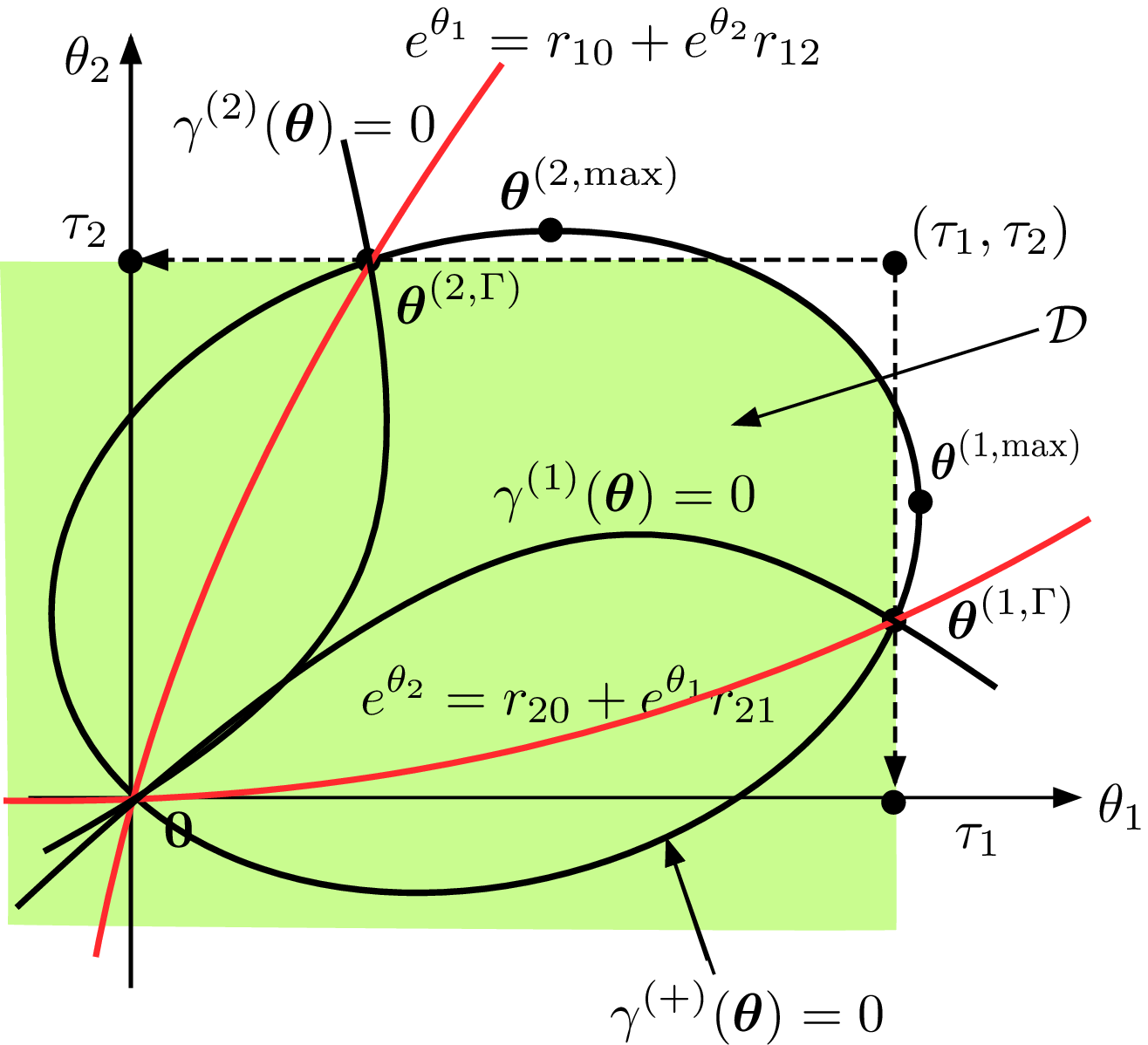} $\;$
	\includegraphics[height=6.7cm]{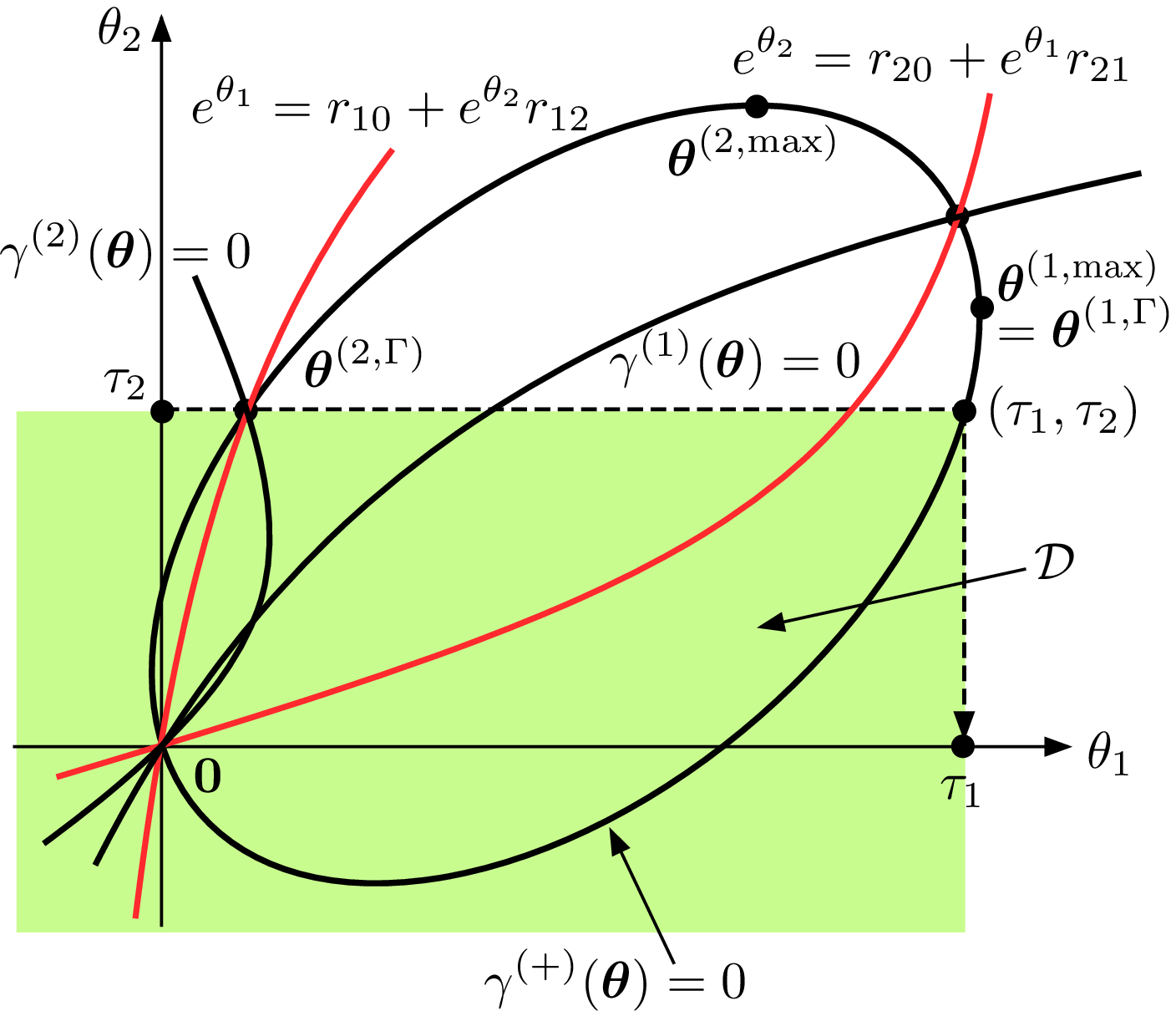} 
	\caption{The domain $\sr{D}$ for the two node generalized Jackson network}
	\label{fig:domain Jackson}
\end{figure}

\subsection{Primitive data and matrix moment generation functions}
\label{sect:matrix moment}

In this section, we describe transition rate matrices and their moment generating functions in terms of the primitive data, $T_{i}, U_{i}, S_{i}, \vc{\beta}_{i}$, of the generalized Jackson network, and prove \eqn{G-Jackson domain 1} and \eqn{G-Jackson domain 2}. They will be used to prove \thr{decay rate Jackson} in the next subsection.

To specify those matrices for the generalized Jackson network, we will use the Kronecker product $\otimes$ and sum $\oplus$, respectively, where $\oplus$ is defined for square matrices $A$ and $B$ as
\begin{align*}
  A \oplus B = A \otimes I_{2} + I_{1} \otimes B,
\end{align*}
where $I_{1}$ and $I_{2}$ are the identity matrices with the same sizes as $A$ and $B$, respectively. From this definition, it is easy to see that. if $A$ and $B$ have right eigenvectors $\vc{h}_{A}$ and $\vc{h}_{A}$ with eigenvalues $\gamma_{A}$ and $\gamma_{B}$, respectively, then
\begin{align}
\label{eqn:oplus eigenvector}
  (A \oplus B) (\vc{h}_{A} \otimes \vc{h}_{B}) = (\gamma_{A} + \gamma_{B}) (\vc{h}_{A} \otimes \vc{h}_{B}).
\end{align}
We also will use this computation.

For transitions around the origin, we let
\begin{align*}
 & \tilde{A}^{(0)}_{00} = {T}_{1} \oplus {T}_{2}, \quad \tilde{A}^{(0)}_{10} = {U}_{1} \otimes I_{2} \otimes \vc{\beta}_{1}, \quad \tilde{A}^{(0)}_{01} = I_{1} \otimes {U}_{2} \otimes \vc{\beta}_{2},\\
 & \tilde{A}^{(0)}_{(-1)0} = I_{1} \otimes I_{2} \otimes {D}_{1} \vc{1}, \quad \tilde{A}^{(0)}_{0(-1)} = I_{1} \otimes I_{2} \otimes {D}_{2} \vc{1},
\end{align*}
where other $\tilde{A}^{(0)}_{ij}$'s not specified above are all null matrices. This convention for null matrices is used for all transition matrices. Around $\sr{U}_{+0} \cup \sr{U}_{+1}$, that is, the 1st coordinate half axis except for the origin,
\begin{align*}
 &  \tilde{A}^{(1)}_{(-1)0} = I_{1} \otimes I_{2} \otimes (r_{10} {D}_{1}) \quad \tilde{A}^{(1)}_{00} = {T}_{1} \oplus {T}_{2} \oplus {S}_{1}, \quad \tilde{A}^{(1)}_{10} = {U}_{1} \otimes I_{2} \otimes I_{3},\\
 &  \tilde{A}^{(1)}_{(-1)1} = I_{1} \otimes I_{2} \otimes (r_{12} {D}_{1}) \otimes \vc{\beta}_{2} \quad \tilde{A}^{(1)}_{01} = I_{1} \otimes {U}_{2} \otimes I_{3} \otimes \vc{\beta}_{2},\\
 &  \tilde{A}^{(1)}_{0(-1)} = I_{1} \otimes I_{2} \otimes I_{3} \otimes (r_{20} {D}_{2} \vc{1}),\quad \tilde{A}^{(1)}_{1(-1)} = I_{1} \otimes I_{2} \otimes I_{3} \otimes (r_{21} {D}_{2} \vc{1}) . \end{align*}
Similarly, around $\sr{U}_{0+} \cup \sr{U}_{1+}$, that is, the 2nd coordinate half axis except for the origin,
\begin{align*}
 & \tilde{A}^{(2)}_{0(-1)} = I_{1} \otimes I_{2} \otimes (r_{20} {D}_{2}),\quad \tilde{A}^{(2)}_{00} = {T}_{1} \oplus {T}_{2} \oplus {S}_{2}, \quad \tilde{A}^{(2)}_{01} = I_{1} \otimes {U}_{2} \otimes I_{4}, \\
 & \tilde{A}^{(2)}_{1(-1)} = I_{1} \otimes I_{2} \otimes \vc{\beta}_{1} \otimes (r_{21} {D}_{2}), \quad \tilde{A}^{(2)}_{10} = {U}_{1} \otimes I_{2} \otimes \vc{\beta}_{1} \otimes I_{4},\\
 & \tilde{A}^{(2)}_{(-1)0} = I_{1} \otimes I_{2} \otimes (r_{10} {D}_{1} \vc{1}) \otimes I,\quad \tilde{A}^{(2)}_{(-1)1} = I_{1} \otimes I_{2} \otimes (r_{12} {D}_{1} \vc{1}) \otimes I_{4} . 
\end{align*}
For transitions within $\sr{U}_{+}$, that is, the interior,
\begin{align*}
 & \tilde{A}_{00} = {T}_{1} \oplus {T}_{2} \oplus {S}_{1} \oplus {S}_{2}, \quad \tilde{A}_{10} = {U}_{1} \otimes I_{2} \otimes I_{2} \otimes I_{3}, \quad \tilde{A}_{01} = I_{1} \otimes {U}_{2} \otimes I_{3} \otimes I_{4}, \\
 & \tilde{A}_{(-1)0} = I_{1} \otimes I_{2} \otimes (r_{10} {D}_{1}) \otimes  I_{4},\quad \tilde{A}_{(-1)1} = I_{1} \otimes I_{2} \otimes (r_{12} {D}_{1}) \otimes I_{4},\\
 & \tilde{A}_{0(-1)} = I_{1}  \otimes I_{2} \otimes I_{3} \otimes (r_{20} {D}_{2}), \quad \tilde{A}_{1(-1)} = I_{1}  \otimes I_{2} \otimes I_{3} \otimes (r_{21} {D}_{2}).
\end{align*}
Thus, we have
\begin{align*}
  \lefteqn{\tilde{A}_{**}(\vc{\theta}) = ({T}_{1} + e^{\theta_{1}} {U}_{1}) \oplus ({T}_{2} + e^{\theta_{2}} {U}_{2})} \hspace{8ex}\\
  & \oplus ({S}_{1} + (e^{-\theta_{1}} r_{10} + e^{-\theta_{1}+\theta_{2}} r_{12}) {D}_{1}) \oplus ({S}_{2} + (e^{-\theta_{2}} r_{20} + e^{\theta_{1}-\theta_{2}} r_{21}) {D}_{2}).
\end{align*}

Recall that $\gamma^{(ia)}(\theta_{i})$ is the Perron-Frobenius eigenvalue of ${T}_{i} + e^{\theta_{i}} {U}_{i}$. We denote its eigenvector by $\vc{h}^{(ia)}(\theta_{i})$. Similarly, we denote the Perron-Frobenius eigenvalues and vectors of ${S}_{1} + (e^{-\theta_{1}} r_{10} + e^{-\theta_{1}+\theta_{2}} r_{12}) {D}_{1}$ and ${S}_{2} + (e^{-\theta_{2}} r_{20} + e^{\theta_{1}-\theta_{2}} r_{21}) {D}_{2}$ by $\gamma^{(1d)}(\vc{\theta})$ and $\gamma^{(2d)}(\vc{\theta})$, and $\vc{h}^{(1d)}(\vc{\theta})$ and $\vc{h}^{(1d)}(\vc{\theta})$, respectively. That is, they satisfy
\begin{align}
\label{eqn:T1}
 & ({T}_{1} + e^{\theta_{1}} {U}_{1}) \vc{h}^{(1a)}(\theta_{1}) = \gamma^{(1a)}(\theta_{1}) \vc{h}^{(1a)}(\theta_{1}),\\
\label{eqn:T2}
 & ({T}_{2} + e^{\theta_{2}} {U}_{2}) \vc{h}^{(2a)}(\theta_{2}) = \gamma^{(2a)}(\theta_{2}) \vc{h}^{(2a)}(\theta_{2}),\\
\label{eqn:S1}
 & ({S}_{1} + (e^{-\theta_{1}} r_{10} + e^{-\theta_{1}+\theta_{2}} r_{12}) {D}_{1}) \vc{h}^{(1d)}(\vc{\theta}) = \gamma^{(1d)}(\vc{\theta}) \vc{h}^{(1d)}(\vc{\theta}),\\
\label{eqn:S2}
 & ({S}_{2} + (e^{-\theta_{2}} r_{20} + e^{\theta_{1}-\theta_{2}} r_{21}) {D}_{2}) \vc{h}^{(2d)}(\vc{\theta}) = \gamma^{(2d)}(\vc{\theta}) \vc{h}^{(2d)}(\vc{\theta}).
\end{align}

Thus, recalling $\gamma^{(+)}(\vc{\theta})$ and letting
\begin{align*}
  \vc{h}^{(+)}(\vc{\theta}) = \vc{h}^{(1a)}(\theta_{1}) \otimes \vc{h}^{(2a)}(\theta_{2}) \otimes \vc{h}^{(1d)}(\vc{\theta}) \otimes \vc{h}^{(2d)}(\vc{\theta}),
\end{align*}
we have, by repeatedly applying \eqn{oplus eigenvector},
\begin{align*}
  \tilde{A}_{**}(\vc{\theta}) \vc{h}^{(+)}(\vc{\theta}) = \gamma^{(+)}(\vc{\theta}) \vc{h}^{(+)}(\vc{\theta}) .
\end{align*}
Hence, recalling the definition \eqn{Gamma 2d+} of $\tilde{\Gamma}^{(2d)}$, we have \eqn{G-Jackson domain 1}.

We next note that $\gamma^{(id)}$ can also be obtained from the moment generating function $g_{i}$ of service time distribution at node $i$. For $i=1,2$, let
\begin{align*}
  t_{i}(\vc{\theta}) = e^{-\theta_{i}} r_{i0} + e^{-\theta_{i}+\theta_{3-i}} r_{i(3-i)},
\end{align*}
then it follows from \eqn{S1} and \eqn{S2} that
\begin{align*}
  t_{i}(\vc{\theta}) \br{\vc{\beta}_{i}, \vc{h}^{(id)}(\vc{\theta})} (-S_{i}\vc{1}) = (\gamma^{(id)}(\vc{\theta}) I - S_{i}) \vc{h}^{(id)}(\vc{\theta}),
\end{align*}
since $D_{i} \vc{h}^{(id)}(\vc{\theta}) = \br{\vc{\beta}_{i}, \vc{h}^{(id)}(\vc{\theta})} (-S_{i}\vc{1})$. Hence, premultiplying $(\gamma^{(id)}(\vc{\theta}) I - S_{i})^{-1}$, we have
\begin{align*}
  \vc{h}^{(id)}(\vc{\theta}) = t_{i}(\vc{\theta}) \br{\vc{\beta}_{i}, \vc{h}^{(id)}(\vc{\theta})} (\gamma^{(id)}(\vc{\theta}) I - S_{i})^{-1} (- S_{i}) \vc{1},
\end{align*}
Let us normalize $\vc{h}^{(id)}(\vc{\theta})$ in such a way that
\begin{align}
\label{eqn:normalize h}
  \br{\vc{\beta}_{i}, \vc{h}^{(id)}(\vc{\theta})} = t_{i}(\vc{\theta})^{-1},
\end{align}
then we have the following facts since $g_{i}$ is nondecreasing.

\begin{lemma}
\label{lem:service time 1}
For $i=1,2$, (a) under the normalization \eqn{normalize h},
\begin{align}
\label{eqn:h id1}
  \vc{h}^{(id)}(\vc{\theta}) = (\gamma^{(id)}(\vc{\theta}) I - S_{i})^{-1} (- S_{i}) \vc{1},
\end{align}
and therefore $g_{i}(-\gamma^{(id)}(\vc{\theta})) = t_{i}(\vc{\theta})^{-1}$, which yields $\gamma^{(id)}(\vc{\theta}) = - g_{i}^{-1}(t_{i}(\vc{\theta})^{-1})$,\\
(b) $\gamma^{(id)}(\vc{\theta}) \ge 0$ if and only if $t_{i}(\vc{\theta}) \ge 1$, which is equivalent to
\begin{align}
\label{eqn:routing condition}
  r_{i0} + e^{\theta_{3-i}} r_{i(3-i)} \ge e^{\theta_{i}}.
\end{align}
(c) If $U_{i} = (-T_{i} \vc{1}) \vc{\alpha}_{i}$ for probability vectors $\vc{\alpha}_{i}$, that is, the arrival process at node $i$ is the renewal process with interarrival distribution determined by the moment generating function:
\begin{align*}
  f_{i}(\theta_{i}) = \br{\vc{\alpha}_{i}, (-\theta_{i} I - T_{i})^{-1} (-T_{i} \vc{1})},
\end{align*}
then $\gamma^{(ia)}(\theta_{i}) = - f_{i}^{-1}(e^{-\theta_{i}})$.
\end{lemma}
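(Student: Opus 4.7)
My plan is to derive all three parts by exploiting the rank-one structure of the exit rate matrices $D_i = (-S_i\vc{1})\vc{\beta}_i$ and (in part (c)) $U_i = (-T_i\vc{1})\vc{\alpha}_i$, together with the defining MGF formula \eqn{g i} for $g_i$.

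For part (a), I would start from the identity
\begin{equation*}
  t_i(\vc{\theta}) \br{\vc{\beta}_i, \vc{h}^{(id)}(\vc{\theta})} (-S_i\vc{1}) = (\gamma^{(id)}(\vc{\theta}) I - S_i) \vc{h}^{(id)}(\vc{\theta}),
\end{equation*}
which is stated immediately before the lemma. Since $-\gamma^{(id)}(\vc{\theta})$ lies in the resolvent set of $S_i$ (the Perron--Frobenius eigenvalue $\gamma^{(id)}(\vc{\theta})$ of $S_i + t_i(\vc{\theta}) D_i$ strictly dominates that of $S_i$, which is $-\theta_{0i}<0$, provided $t_i(\vc{\theta})>0$), the matrix $\gamma^{(id)}(\vc{\theta}) I - S_i$ is invertible. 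The normalization $\br{\vc{\beta}_i, \vc{h}^{(id)}(\vc{\theta})} = t_i(\vc{\theta})^{-1}$ collapses the left side to $-S_i\vc{1}$, which gives formula \eqn{h id1} after premultiplying by $(\gamma^{(id)}(\vc{\theta}) I - S_i)^{-1}$. Taking the inner product with $\vc{\beta}_i$ on \eqn{h id1} and comparing with the definition \eqn{g i} of $g_i$ yields $g_i(-\gamma^{(id)}(\vc{\theta})) = t_i(\vc{\theta})^{-1}$; since $g_i$ is strictly increasing on its natural domain (as recalled right after \eqn{g i}), inversion gives $\gamma^{(id)}(\vc{\theta}) = -g_i^{-1}(t_i(\vc{\theta})^{-1})$.

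Part (b) follows from part (a) as a routine monotonicity computation: $g_i(0)=1$ (since $g_i$ is the MGF of a probability distribution at $\theta=0$), so $\gamma^{(id)}(\vc{\theta})\ge 0$ iff $g_i(-\gamma^{(id)}(\vc{\theta}))\le 1$ iff $t_i(\vc{\theta})^{-1}\le 1$ iff $t_i(\vc{\theta})\ge 1$. Multiplying the latter inequality through by $e^{\theta_i}$ yields the stated form \eqn{routing condition}.

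For part (c), I would repeat the same rank-one trick. The eigenvalue equation
\begin{equation*}
  (T_i + e^{\theta_i} U_i) \vc{h}^{(ia)}(\theta_i) = \gamma^{(ia)}(\theta_i) \vc{h}^{(ia)}(\theta_i)
\end{equation*}
combined with $U_i = (-T_i\vc{1})\vc{\alpha}_i$ gives
\begin{equation*}
  (\gamma^{(ia)}(\theta_i) I - T_i) \vc{h}^{(ia)}(\theta_i) = e^{\theta_i} \br{\vc{\alpha}_i, \vc{h}^{(ia)}(\theta_i)} (-T_i\vc{1}).
\end{equation*}
Invertibility of $\gamma^{(ia)}(\theta_i) I - T_i$ holds by the same Perron--Frobenius domination argument. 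Premultiplying by its inverse and then taking inner product with $\vc{\alpha}_i$ (noting $\br{\vc{\alpha}_i, \vc{h}^{(ia)}(\theta_i)}\ne 0$ since $\vc{h}^{(ia)}(\theta_i)$ is positive) cancels the inner product factor and leaves $1 = e^{\theta_i} f_i(-\gamma^{(ia)}(\theta_i))$, i.e.\ $\gamma^{(ia)}(\theta_i) = -f_i^{-1}(e^{-\theta_i})$.

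The only mildly delicate point is ensuring that $-\gamma^{(id)}(\vc{\theta})$ (respectively $-\gamma^{(ia)}(\theta_i)$) lies in the interval where $g_i$ (respectively $f_i$) is defined and strictly monotone, so that the inversion is legitimate; this follows from Perron--Frobenius monotonicity of the eigenvalue under a rank-one positive perturbation, so I expect no real obstacle beyond invoking this observation. Everything else is linear algebra plus the normalization identity \eqn{normalize h}.
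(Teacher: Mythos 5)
Your proposal is correct and follows essentially the same route the paper takes: the paper derives the key identity $t_i(\vc{\theta}) \br{\vc{\beta}_i, \vc{h}^{(id)}(\vc{\theta})}(-S_i\vc{1}) = (\gamma^{(id)}(\vc{\theta}) I - S_i)\vc{h}^{(id)}(\vc{\theta})$ and the normalization \eqn{normalize h} immediately before the lemma, and then defers to references (Proposition 2 of \cite{Taka1981}, Lemma 4.1 of \cite{FujiTakaMaki1998}) for (a) and (c), while (b) is the obvious monotonicity consequence. You simply spell out the routine steps (inner product with $\vc{\beta}_i$ to identify $g_i$, the Perron--Frobenius domination argument for invertibility, and the analogous rank-one manipulation for $T_i + e^{\theta_i}U_i$ in part (c)); nothing essentially differs from the paper's intended argument.
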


\begin{remark} {\rm
\label{rem:service time 1}
  (a) and (c) are known (see, e.g., Proposition 2 of \cite{Taka1981} and Lemma 4.1 of \cite{FujiTakaMaki1998}).
}\end{remark}

\subsection{Proof of \thr{decay rate Jackson}}
\label{sect:proof of Jackson}

We first verify \ass{key assumption 3} for $i=1$ and $\tilde{c}^{(1)}_{1}(\vc{\theta}) = 0$, Namely, for $\vc{\theta} \in \dd{R}^{2}$ satisfying that $\gamma^{(+)}(\vc{\theta}) = 0$, that is,
\begin{align}
\label{eqn:g+ 0}
  \gamma^{(1a)}(\theta_{1}) + \gamma^{(2a)}(\theta_{2}) + \gamma^{(1d)}(\vc{\theta}) +\gamma^{(2d)}(\vc{\theta}) = 0 ,
\end{align}
we show that there are some $\tilde{c}^{(1)}_{0}(\vc{\theta})$ and $\vc{h}^{(01)}(\vc{\theta}) > \vc{0}$ such that
\begin{align}
\label{eqn:G-Jackson S1}
 & \tilde{A}^{(1)}_{*0}(\theta_{1}) \vc{h}^{(01)}(\vc{\theta}) + e^{\theta_{2}} \tilde{A}^{(1)}_{*1}(\theta_{1}) \vc{h}^{(+)}(\vc{\theta}) = \tilde{c}^{(1)}_{0}(\vc{\theta}) \vc{h}^{(01)}(\vc{\theta}),\\
\label{eqn:G-Jackson S2}
 & e^{-\theta_{2}} \tilde{A}^{(1)}_{*(-1)}(\theta_{1}) \vc{h}^{(01)}(\vc{\theta}) + \tilde{A}_{*0}(\theta_{1}) \vc{h}^{(+)}(\vc{\theta}) + e^{\theta_{2}} \tilde{A}_{*1}(\theta_{1}) \vc{h}^{(+)}(\vc{\theta}) = \vc{0}, \qquad
\end{align}
where
\begin{align*}
 & \tilde{A}_{*+}(\vc{\theta}) = ({T}_{1} + e^{\theta_{1}} {U}_{1}) \oplus ({T}_{2} + e^{\theta_{2}} {U}_{2}) \oplus ({S}_{1} + (e^{-\theta_{1}} r_{10} + e^{-\theta_{1}+\theta_{2}} r_{12}) {D}_{1}) \oplus {S}_{2},\\
 & \tilde{A}_{*(-1)}(\theta_{1}) = I_{1} \otimes I_{2} \otimes I_{3} \otimes ((r_{20} + r_{21} e^{\theta_{1}}) {D}_{2}),\\
 & \tilde{A}^{(1)}_{*(-1)}(\theta_{1}) = I_{1} \otimes I_{2} \otimes I_{3} \otimes ((r_{20} + r_{21} e^{\theta_{1}}) {D}_{2} \vc{1}),\\
 & \tilde{A}^{(1)}_{*0}(\theta_{1}) = ({T}_{1} + e^{\theta_{1}} {U}_{1}) \oplus {T}_{2} \oplus ({S}_{1} + r_{10} e^{-\theta_{1}} {D}_{1}),\\
 & \tilde{A}^{(1)}_{*1}(\theta_{1}) = I_{1} \otimes ({U}_{2} \oplus (r_{12} e^{-\theta_{1}} {D}_{1})) \otimes \vc{\beta}_{2}.
\end{align*}
We further require the non-singularity condition:
\begin{align}
\label{eqn:non-singular 1}
  \tilde{A}^{(1)}_{*0}(\theta_{1}) \vc{h}^{(01)}(\vc{\theta}) < \vc{0}.
\end{align}
From \eqn{G-Jackson S1}, this holds if $\tilde{c}_{0}^{(1)}(\vc{\theta}) \le 0$.

Since $\tilde{A}_{**}(\vc{\theta}) \vc{h}^{(+)}(\vc{\theta}) = \vc{0}$ by \eqn{g+ 0}, \eqn{G-Jackson S2} is equivalent to
\begin{align}
\label{eqn:G-Jackson S3}
 & \tilde{A}^{(1)}_{*(-1)}(\theta_{1}) \vc{h}^{(01)}(\vc{\theta}) - \tilde{A}_{*(-1)}(\theta_{1}) \vc{h}^{(+)}(\vc{\theta}) = \vc{0}.
\end{align}
Note that $ \tilde{A}_{*(-1)}(\theta_{1})$ and $ \tilde{A}^{(1)}_{*(-1)}(\theta_{1})$ have a similar form, so we let
\begin{align*}
  \vc{h}^{(1)}(\vc{\theta}) = \vc{h}^{(1a)}(\theta_{1}) \otimes \vc{h}^{(2a)}(\theta_{2}) \otimes \vc{h}^{(1d)}(\vc{\theta}),
\end{align*}
and guess that, for some scalar $a(\vc{\theta})$,
\begin{align*}
  \vc{h}^{(01)}(\vc{\theta}) = a(\vc{\theta}) \vc{h}^{(1)}(\vc{\theta}).
\end{align*}

We first verify \eqn{G-Jackson S1}. Since 
\begin{align*}
  \lefteqn{\tilde{A}^{(1)}_{*0}(\theta_{1}) \vc{h}^{(01)}(\vc{\theta}) = a(\vc{\theta}) \big( \gamma^{(1a)}(\theta_{1}) \vc{h}^{(1)}(\vc{\theta}) + \vc{h}^{(1a)}(\theta_{1}) \otimes (T_{2} \vc{h}^{(2a)}) \otimes \vc{h}^{(1d)}(\vc{\theta})}\\
  & \hspace{25ex} + \vc{h}^{(1a)}(\theta_{1}) \otimes \vc{h}^{(2a)}(\theta_{2}) \otimes ({S}_{1} + r_{10} e^{-\theta_{1}} {D}_{1}) \vc{h}^{(1d)}(\vc{\theta}) \big),\\
  \lefteqn{e^{\theta_{2}} \tilde{A}^{(1)}_{*1}(\theta_{1}) \vc{h}^{(+)}(\vc{\theta})  = \vc{h}^{(1a)}(\theta_{1}) \otimes \left(e^{\theta_{2}} U_{2} \vc{h}^{(2a)}(\theta_{2}) \otimes \vc{h}^{(1d)}(\vc{\theta})\right.}\\
  & \hspace{25ex} + \left. \vc{h}^{(2a)}(\theta_{2}) \otimes (r_{12} e^{-\theta_{1}+\theta_{2}} D_{1} \vc{h}^{(1d)}(\vc{\theta}))\right) \br{\vc{\beta}_{2}, \vc{h}^{(2d)}(\vc{\theta})}, 
\end{align*}
we choose $a(\vc{\theta}) = \br{\vc{\beta}_{2}, \vc{h}^{(2d)}(\vc{\theta})}$, which is $t_{2}(\vc{\theta})^{-1}$ by \eqn{normalize h}, then
\begin{align*}
  \tilde{A}^{(1)}_{*0}(\theta_{1}) \vc{h}^{(01)}(\vc{\theta}) + e^{\theta_{2}} \tilde{A}^{(1)}_{*1}(\theta_{1}) \vc{h}^{(+)}(\vc{\theta})
 & = \big( \gamma^{(1a)}(\theta_{1}) + \gamma^{(2a)}(\theta_{2}) + \gamma^{(1d)}(\vc{\theta}) \big) \vc{h}^{(01)}(\vc{\theta}).
\end{align*}
Hence, we have \eqn{G-Jackson S1} with
\begin{align*}
  \tilde{c}^{(1)}_{0}(\vc{\theta}) = \gamma^{(1a)}(\theta_{1}) + \gamma^{(2a)}(\theta_{2}) + \gamma^{(1d)}(\vc{\theta}) (\equiv \gamma^{(1)}(\vc{\theta})).
\end{align*}

We next consider \eqn{G-Jackson S3}. Recall that $D_{2} = (-S_{2}\vc{1}) \vc{\beta}_{2}$. Since
\begin{align*}
  & e^{-\theta_{2}} \tilde{A}^{(1)}_{*(-1)}(\theta_{1}) \vc{h}^{(01)}(\vc{\theta}) = \vc{h}^{(01)}(\vc{\theta}) \otimes ((r_{20} e^{-\theta_{2}} + r_{21} e^{\theta_{1} - \theta_{2}}) \br{\vc{\beta}_{2}, \vc{h}^{(2d)}(\vc{\theta})} {D}_{2} \vc{1})),\\
  & e^{-\theta_{2}} \tilde{A}_{*(-1)}(\theta_{1}) \vc{h}^{(+)}(\vc{\theta}) = \vc{h}^{(01)}(\vc{\theta}) \otimes ((r_{20} e^{-\theta_{2}} + r_{21} e^{\theta_{1} - \theta_{2}}) {D}_{2} \vc{h}^{(2d)}(\vc{\theta})),\\
 & D_{2} \vc{h}^{(2d)}(\vc{\theta}) = (-S_{2}\vc{1}) \br{\vc{\beta}_{2}, \vc{h}^{(2d)}(\vc{\theta})} = \br{\vc{\beta}_{2}, \vc{h}^{(2d)}(\vc{\theta})} (-S_{2}\vc{1}) = \br{\vc{\beta}_{2}, \vc{h}^{(2d)}(\vc{\theta})} D_{2} \vc{1},
\end{align*}
we have \eqn{G-Jackson S3}. Thus, we have verified \ass{key assumption 3}, and therefore $\vc{\theta} \in \tilde{\Gamma}^{(2e)}_{1+}$ is equivalent to $\gamma^{(+)}(\vc{\theta}) = 0$ and $\gamma^{(1)}(\vc{\theta}) \le 0$. Because arguments are symmetric for nodes 1 and 2, we can get similar results for node 2. Thus, \thr{decay rate Jackson} follows from \thr{decay rate 2} because of \rem{Gamma 2ei}.

We finally note that, for $\vc{\theta} \in \dd{R}^{2}$ satisfying $\gamma^{(+)}(\vc{\theta}) = 0$, $\gamma^{(3-i)}(\vc{\theta}) \le 0$ is equivalent to $\gamma^{(id)}(\vc{\theta}) \ge 0$, which further is equivalent to \eqn{routing condition}. Hence, we have verified the claim in \rem{decay rate Jackson 1}.

\section{Concluding remarks}
\setnewcounter
\label{sect:concluding}

We have studied the existence of a superharmonic vector for a nonnegative matrix with QBD block structure. We saw how this existence is useful for studying the tail asymptotics of the stationary distribution of a Markov modulated two dimensional reflecting random walk, called the 2d-QBD process. We have assumed that all blocks of the nonnegative matrix are finite dimensional. This is a crucial assumption, but we need to remove it for studying a higher dimensional reflecting random walk. This is a challenging problem. Probably, further structure is needed for the background process. For example, we may assume that each block matrix has again QBD block structure, which is satisfied by a reflecting random walk in any number of dimensions with Markov modulation. We think research in this direction would be useful.

Another issue is about the tail asymptotics for a generalized Jackson network. We have considered the two node case. In this case, the tail decay rates are determined by time average cumulant moment generating functions, $\gamma^{(a)}_{i}$ and $\gamma^{(d)}_{i}$ by \thr{decay rate Jackson}. This suggests that more general arrival processes and more general routing mechanisms may lead to the decay rates in the same way. Some related issues have been recently considered for a single server queue in Section 2.4 of \cite{Miya2015}, but the network case has not yet been studied. So, it is also an open problem.

In a similar fashion, we may be able to consider a generalized Jackson network with more than two nodes. To make the problem specific, let us consider the $k$ node cases for $k \ge 2$. Let $K = \{1,2,\ldots, k\}$, and let
\begin{align*}
  \gamma^{(+)}(\vc{\theta}) = \sum_{j=1}^{k} (\gamma^{(ja)}(\theta_{j}) + \gamma^{(jd)}(\vc{\theta})).
\end{align*}
Then, the sets similar to $\tilde{\Gamma}^{2e}_{i+}$ for $k=2$ may be indexed by a non-empty subset $A$ of $K$, and given by
\begin{align*}
  \tilde{\Gamma}^{ke}_{A+} = \left\{ \vc{\theta} \in \dd{R}^{k}; \gamma^{(+)}(\vc{\theta}) = 0, \gamma^{(i)}(\vc{\theta}) \ge 0, \forall i \in K \setminus A \right\}.
\end{align*}
These together with $\tilde{\Gamma}^{kd}_{+} = \left\{ \vc{\theta} \in \dd{R}^{k}; \gamma^{(+)}(\vc{\theta}) \le 0 \right\}$ would play the same role as in the two dimensional case. That is, they would characterize the tail decay rates of the stationary distribution. We may generate those sets from
\begin{align*}
  \tilde{\Gamma}^{ke}_{i+} = \left\{ \vc{\theta} \in \dd{R}^{k}; \gamma^{(+)}(\vc{\theta}) = 0, \gamma^{(i)}(\vc{\theta}) \ge 0 \right\}, \qquad i \in K.
\end{align*}
Thus, the characterization may be much simpler than that for a general $k$ dimensional random walk with Markov modulation. However, we do not know how to derive the decay rates from them for $k \ge 3$ except for tandem type models under some simple situations (e.g., see \cite{BertPascTsit1998,Chan1995}). This remains as a very challenging problem (e.g., see Section 6 of \cite{Miya2011}).

We finally remark on the continuity of the decay rate for a sequence of the two node generalized Jackson networks which weakly converges to the two dimensional SRBM in heavy traffic. Under suitable scaling and appropriate conditions, such convergence is known not only for their processes but for their stationary distributions (see, e.g, \cite{BudhLee2009,GamaZeev2006}). Since the tail decay rates are known for this SRBM (see \cite{DaiMiya2011}), we can check whether the decay rate also converges to that of the SRBM. This topic is considered in \cite{Miya2015}.

\bigskip

\appendix

\noindent{\bf \Large Appendix}

\section{\large Proof of \lem{Gamma+}}
\label{app:Gamma+}

  (a) For sufficiency, we assume that $\Gamma^{(1d)}_{+} \ne \emptyset$, that is, there is a $\theta \in \Gamma^{(1d)}_{+}$. For this $\theta$, let $\vc{y} = (\vc{y}_{1}, \vc{y}_{2}, \ldots)^{\rs{t}}$ for $\vc{y}_{n} = e^{\theta n} \vc{h}^{A_{*}}(\theta)$, then it is easy to see that $K_{+} \vc{y} \le \vc{y}$, and therefore $c_{p}(K_{+}) \ge 1$. For necessity, we use the same idea as in the proof of Theorem 3.1 of \cite{KobaMiyaZhao2010}. Assume the contrary that $\Gamma^{(1d)}_{+} = \emptyset$, which is equivalent to $\min_{\theta} \gamma^{(A_{*})}(\theta) > 1$, when $c_{p}(K_{+}) \ge 1$ holds, and leads a contradiction. By this supposition and the convexity of $\gamma^{(A_{*})}(\theta)$, there is a $\theta_{0}$ such that $\gamma^{(A_{*})}(\theta_{0}) > 1$ and $(\gamma^{(A_{*})})'(\theta_{0}) = 0$. 

We next define the stochastic matrix $\widehat{P}^{(\theta_{0})}$ whose $(k, \ell)$ block matrix is given by
\begin{align}
\label{eqn:q}
  \widehat{P}^{(\theta_{0})}_{k \ell} = \left\{\begin{array}{ll}
  [\gamma^{(A_{*})}(\theta_{0})]^{-1} \widehat{A}^{(\theta_{0})}_{k - \ell}, \qquad & k \ge 1, \ell \ge 0, \; |k-\ell| \le 1, \vspace{1ex}\\
  I, & k=\ell=0, \vspace{1ex}\\
  0, & \mbox{otherwise}.
  \end{array} \right.
\end{align}
Since $\gamma^{(A_{*})}(\theta_{0}) > 1 > 0$, this stochastic matrix $\widehat{P}^{(\theta_{0})}$ is well defined. The Markov chain with this transition matrix is a Markov modulated random walk on $\dd{Z}_{+}$ with an absorbing state at block $0$, where $[\gamma^{(A_{*})}(\theta_{0})]^{-1} \widehat{A}^{(\theta_{0})}$ is the transition probability matrix of the background process as long as the random walk part is away from the origin. Denote its stationary distribution by $\widehat{\vc{\pi}}^{(\theta_{0})}$. That is,
\begin{align*}
  \widehat{\vc{\pi}}^{(\theta_{0})} \widehat{A}^{(\theta_{0})} = \gamma^{(A_{*})}(\theta_{0}) \widehat{\vc{\pi}}^{(\theta_{0})},
\end{align*}
  which is equivalent to
\begin{align*}
  \widehat{\vc{\pi}}^{(\theta_{0})} \Delta^{-1}_{\vc{h}^{(A_{*})}(\theta_{0})} A_{*}(\theta_{0}) = \gamma^{(A_{*})}(\theta_{0}) \widehat{\vc{\pi}}^{(\theta_{0})} \Delta^{-1}_{\vc{h}^{(A_{*})}(\theta_{0})}.
\end{align*}
  Taking the derivatives of $A_{*}(\theta) \vc{h}^{(A_{*})}(\theta) = \gamma^{(A_{*})}(\theta) \vc{h}^{(A_{*})}(\theta)$ at $\theta = \theta_{0}$, we have
\begin{align*}
  A_{*}'(\theta_{0}) \vc{h}^{(A_{*})}(\theta_{0}) + A_{*}(\theta_{0}) (\vc{h}^{(A_{*})})'(\theta_{0}) = \gamma^{(A_{*})}(\theta_{0}) (\vc{h}^{(A_{*})})'(\theta_{0}) + (\gamma^{(A_{*})})'(\theta_{0}) \vc{h}^{(A_{*})}(\theta_{0}).
\end{align*}
Multiplying by $\widehat{\vc{\pi}}^{(\theta_{0})} \Delta^{-1}_{\vc{h}^{(A_{*})}(\theta_{0})}$ from the left, we have
\begin{align}
\label{eqn:drift 1}
  \widehat{\vc{\pi}}^{(\theta_{0})} \Delta^{-1}_{\vc{h}^{(A_{*})}(\theta_{0})} (- e^{-\theta_{0}} A_{-1} + e^{\theta_{0}} A_{1}) \Delta_{\vc{h}^{(A_{*})}(\theta_{0})} \vc{1} = (\gamma^{(A_{*})})'(\theta_{0}).
\end{align}
The left side of this equation is the mean drift of the Markov modulated random walk. Since $(\gamma^{\rt{A}_{*}})'(\theta_{0}) = 0$, this drift vanishes, and therefore the random walk hits one level below with probability one.

Since we have assumed that $c_{p}(K_{+}) \ge 1$, $K_{+}$ has a superharmonic $\vc{y}^{+}$. Let $\vc{y}^{+}= (\vc{y}^{+}_{0}, \vc{y}^{+}_{1}, \ldots)^{\rs{t}}$ be a superharmonic vector of $K_{+}$, and let
\begin{align}
\label{eqn:tilde y 1}
  \widehat{\vc{y}}^{(\theta_{0})}_{n} = e^{-\theta_{0} n} \Delta_{\vc{h}^{(A_{*})}(\theta_{0})}^{-1} \vc{y}^{+}_{n}, \qquad n \ge 0.
\end{align}
We then rewrite \eqn{y +} as
\begin{align}
\label{eqn:harmonic tilde 1}
  \gamma^{(A_{*})}(\theta_{0}) \sum_{\ell=0, \pm 1} P^{(\theta_{0})}_{n (n+\ell)} \widehat{\vc{y}}^{(\theta_{0})}_{n+\ell} \le \widehat{\vc{y}}^{(\theta_{0})}_{n}, \qquad n \ge 1.
\end{align}
Let $f_{(n,i)0}^{(\ell)}(\theta_{0})$ be the probability that the Markov chain with transition matrix $\widehat{P}^{(\theta_{0})}$ is absorbed at block $0$ at time $\ell$ given that it starts at state $(n,i)$, and denote the vector whose $i$-th entry is $f_{(n,i)0}^{(\ell)}(\theta_{0})$ by $\vc{f}^{(\ell)}_{n0}$. Define its generating function as
\begin{align}
\label{eqn:f*}
    \vc{f}_{n0}^{(*)}(v,\theta_{0}) = \sum_{\ell=1}^{\infty} v^{\ell} \vc{f}_{n0}^{(\ell)}(\theta_{0}).
\end{align}

Assume that $\widehat{\vc{y}}_{1}(\theta_{0}) \ge \vc{1}$, which is equivalent to
\begin{align}
\label{eqn:bound h}
  \vc{y}^{+}_{1} \ge \vc{h}^{(A_{*})}(\theta_{0}).
\end{align}
  We can always take $\vc{h}^{(A_{*})}(\theta_{0})$ satisfying this condition because the vectors are finite dimensional and constant multiplication does not change the eigenvalue. Since $\widehat{\vc{y}}^{(\theta_{0})}$ is $\gamma(\theta_{0})$-superharmonic by \eqn{harmonic tilde 1}, it follows from the right-invariant version of Lemma 4.1 of Vere-Jones~\cite{Vere1967} that
\begin{align}
\label{eqn:y tilde}
  \widehat{\vc{y}}^{(\theta_{0})}_{n} \ge \vc{f}_{n0}^{(*)}(\gamma^{(A_{*})}(\theta_{0}), \theta_{0}) , \qquad n \ge 1.
\end{align}
However, the random walk is null recurrent. Hence, $\vc{f}_{n0}^{*}(1; \theta_{0}) = \vc{1}$. This implies that $\vc{f}_{n0}^{*}(\gamma^{(A_{*})}(\theta_{0}); \theta_{0}) = \vc{\infty}$ because $(\vc{f}_{n0}^{(*)})'(1, \theta_{0}) = \vc{\infty}$ and $\gamma^{(A_{*})}(\theta_{0}) > 1$, which implies that $\widehat{\vc{y}}^{(\theta_{0})}_{n} = \infty$. This and \eqn{bound h} conclude that $\vc{y}^{+} = \infty$, which contradicts the fact that $\vc{y}^{+}$ is superharmonic for $K_{+}$. Thus, we must have that $\min_{\theta} \gamma^{(A_{*})}(\theta) \le 1$.\\
(b) It follows from (a) that $c_{p}(u K_{+}) \ge 1$ if and only if $u \gamma^{A_{*}}(\theta) \le 1$ for some $\theta \in \dd{R}$. By \eqn{cp harmonic 1}, $c_{p}(K_{+}) \ge u$ if and only if $c_{p}(uK_{+}) \ge 1$. Hence,
\begin{align*}
  c_{p}(K_{+}) = \sup \{u \ge 0; \exists \theta \in \dd{R}, u \gamma^{A_{*}}(\theta) \le 1\} = (\min_{\theta} \gamma^{(A_{*})}(\theta))^{-1}.
\end{align*}
This proves (b). We remark that the finiteness of $m$ is crucial for \eqn{bound h} to hold.

\section{\large Proof of \lem{Gamma convex 1}}
\label{app:Gamma convex 1}

  Since $\Gamma^{(1d)}_{0+}$ is a subset of $\Gamma^{(1d)}_{+} \cap \Gamma^{(1d)}_{0}$, it is bounded. For the convexity, we apply the same method that was used to prove Lemma 3.7 of \cite{MiyaZwar2012}. For $\theta_{1}, \theta_{2} \in \Gamma^{(1d)}_{0+}$, there exist positive vectors $\vc{h}^{(1)}(\theta)$ and $\vc{h}^{(2)}(\theta)$ such that, for $i=1,2$,
\begin{align*}
  A_{*}(\theta_{i}) \vc{h}^{(i)}(\theta) \le \vc{h}^{(i)}(\theta), \qquad C_{*}(\theta_{i}) \vc{h}^{(i)}(\theta) \le \vc{h}^{(i)}(\theta) .
\end{align*}
Choose an arbitrary number $\lambda \in (0,1)$. Let $\vc{g}$ be the vector whose $j$-th entry $g_{j}$ is given by
\begin{align*}
  g_{j} = (h^{(1)}_{j})^{\lambda} (h^{(2)}_{j})^{1-\lambda}, \qquad j = 1,2,\ldots,m.
\end{align*}
Then, using H\"{o}lder's inequality similarly to the proof of Lemma 3.7 of \cite{MiyaZwar2012}, we can show that
\begin{align*}
  A_{*}(\lambda \theta_{1} + (1-\lambda)\theta_{2}) \vc{g} \le \vc{g}, \qquad C_{*}(\lambda \theta_{1} + (1-\lambda)\theta_{2}) \vc{g} \le \vc{g} .
\end{align*}
This proves that $\lambda \theta_{1} + (1-\lambda)\theta_{2} \in \Gamma^{(1d)}_{0+}$. Thus, $\Gamma^{(1d)}_{0+}$ is a convex set, and therefore it is a finite interval.

It remains to prove that $\Gamma^{(1d)}_{0+}$ is a closed set. To see this, let $\theta_{n}$ be an increasing sequence converging to $\theta_{\max}$. Then, we can find $\vc{h}_{n}$ for each $\theta_{n}$ such that \eqn{A 1} and \eqn{C 1} hold for $\vc{h} = \vc{h}_{n}$ and $\theta = \theta_{n}$ and it is normalized so that $\vc{h}_{n}^{\rs{t}} \vc{1} = 1$, where $\vc{1}$ is the column vector whose entries are all units. Since $\vc{h}_{n}$ is normalized, we can further find a subsequence of $\{\vc{h}_{n}\}$ which converges to some finite $\vc{h}_{\infty} \ge 0$ as $n \to \infty$. Since $\theta_{n}$ converges to $\theta_{\max}$ as $n \to \infty$, we have \eqn{A 1} and \eqn{C 1} for $\vc{h}_{\infty}$ and $\theta_{\max}$, which in turn imply that $\vc{h}_{\infty} > 0$ by the irreducibility of $A_{*}(\theta)$. Hence, $\theta_{\max} \in \Gamma^{(1d)}_{0+}$. Similarly, we can prove $\theta_{\min} \in \Gamma^{(1d)}_{0+}$. Thus, $\Gamma^{(1d)}_{0+} = [\theta_{\min}, \theta_{\max}]$.

\section{\large A counter example}
\label{app:counter example}

We produce an example such that $A_{1} G_{-} \ne e^{\theta} A_{1}$ for any $\theta \ne 0$ for $m=2$. For $p,q,r,s >0$ such that $p+q+r < 1$, $2p + q < 1$ and $s < 1$, define two dimensional matrices $A_{i}$ as
\begin{align*}
  A_{-1} = \left(\begin{array}{cc}r & 0 \\s & 0\end{array}\right), \quad
  A_{0} = \left(\begin{array}{cc}0 & 1-(p+q+r) \\s & 1-s\end{array}\right), \quad
  A_{1} = \left(\begin{array}{cc}p & q \\0 & 0\end{array}\right). \quad
\end{align*}
Since $A \equiv A_{-1} + A_{0} + A_{1}$ has the stationary measure $(s, 1-(p+r))$, the Markov additive process with kernel $\{A_{i}; i=0, \pm 1\}$ has a negative drift by the condition that $2p + q < 1$. Hence $G_{-}$ must be stochastic. Furthermore, the background state must be $1$ after the level is one down because the second column of $A_{-1}$ vanishes. Hence,
\begin{align*}
  G_{-} = \left(\begin{array}{cc}1 & 0 \\1 & 0\end{array}\right),
\end{align*}
and therefore
\begin{align*}
  A_{1} G_{-} = \left(\begin{array}{cc}p+q & 0 \\0 & 0\end{array}\right) \ne \left(\begin{array}{cc} e^{\theta} p & e^{\theta} q \\0 & 0\end{array}\right) = e^{\theta} A_{1}.
\end{align*}

\section{\large Proofs for the upper bounds}
\setnewcounter
\label{app:domain}

In this section, we prove \lem{domain 0} and \thr{upper bound 1}. To this end, we formulate the 2d-QBD process $\{Z_{n}\}$ as a Markov modulated reflecting random walk on the quarter lattice plane, and consider the stationary equation for this random walk using moment generating functions. Similarly to the one dimensional QBD processes in \sectn{nonnegative}, we first derive a canonical form for the stationary equations. This canonical form simplifies transitions around the boundary similar to the QBD case. 

\subsection{The stationary equation and inequality in canonical form}
\label{app:stationary equation}

Assume that $\{Z_{n}\}$ has the stationary distribution $\pi$. Let 
\begin{align*}
 & \varphi^{(w)}_{k}(\vc{\theta}) = \dd{E}(e^{\br{\vc{\theta},\vc{L}}}; \vc{L} \in \sr{U}_{w}, J=k), \qquad k \in \sr{V}_{+}, {w} = +, ++,\\
 & \varphi^{(w)}_{k}(\theta_{1}) = \dd{E}(e^{\theta_{1} L^{(1)}}; \vc{L} \in \sr{U}_{w}, J=k), \qquad k \in \sr{V}_{1}, w = 1, +0, +1,\\
 & \varphi^{(w)}_{k}(\theta_{2}) = \dd{E}(e^{\theta_{2} L^{(2)}}; \vc{L} \in \sr{U}_{w}, J=k), \qquad k \in \sr{V}_{2}, w = 2, 0+, 1+,
\end{align*}
where $Z \equiv (\vc{L}, J)$ is a random vector subject to $\pi$. We denote the vectors whose $k$-th entry is $\varphi^{(w)}_{k}(\vc{\theta})$ and $\varphi^{(w)}_{k}(\theta_{\ell})$ respectively by $\vc{\varphi}^{(w)}(\vc{\theta})$ and $\vc{\varphi}^{(w)}(\theta_{\ell})$. Similarly, $\vc{\pi}(i,j)$ denotes the vectors for the stationary probabilities $\pi(i,j,k)$.

\begin{lemma} {\rm
\label{lem:MGF se 1}
If $\varphi(\vc{\theta})$ is finite, then
\begin{align}
\label{eqn:se 6v}
  \lefteqn{\vc{\varphi}^{(++)}(\vc{\theta}) (I - {A}_{**}(\vc{\theta}))} \hspace{3ex}\nonumber\\
  & + e^{\theta_{2}} \vc{\varphi}^{(+1)} (\theta_{1}) \big(I - C^{(1)}_{**}(\vc{\theta}) \big) + e^{\theta_{1}} \vc{\varphi}^{(1+)} (\theta_{2}) \big(I -C^{(2)}_{**}(\vc{\theta}) \big) + \vc{\psi}^{(0)}(\vc{\theta}) = 0, \hspace{5ex}
\end{align}
  where
\begin{align*}
    \lefteqn{\vc{\psi}^{(0)}(\vc{\theta}) = e^{\theta_{1} + \theta_{2}} \vc{\pi}(1,1) (I - {A}^{(0)}_{++}(\vc{\theta}))} \hspace{4ex}\\
  & \quad -  e^{\theta_{1} + \theta_{2}} \big(\vc{\pi}(1,0) {A}^{(1)}_{+1}(\theta_{1}) + e^{\theta_{2}} \vc{\pi}(0,1) {A}^{(2)}_{1+}(\theta_{2}) + \vc{\pi}(\vc{0}) {A}^{(0)}_{11} \big)\\
  & \quad - e^{\theta_{2}} \vc{\psi}^{(1)}(\theta_{1}) (I - {A}^{(1)}_{*0}(\theta_{1}))^{-1} {A}^{(1)}_{*1}(\theta_{1}) - e^{\theta_{1}} \vc{\psi}^{(2)}(\theta_{2}) (I - {A}^{(2)}_{0*}(\theta_{2}))^{-1} {A}^{(2)}_{1*}(\theta_{2}) ,
\end{align*}
in which $\vc{\psi}^{(1)}(\theta_{1})$ and $\vc{\psi}^{(2)}(\theta_{2})$ are defined as
\begin{align*}
 & \vc{\psi}^{(1)}(\theta_{1}) = e^{\theta_{1}} \big(\vc{\pi}(1,1) {A}^{(1)}_{+(-1)}(\theta_{1}) + \vc{\pi}(1,0) ({A}^{(1)}_{+0}(\theta_{1}) - I) +\vc{\pi}(0,1) {A}^{(0)}_{1(-1)} + \vc{\pi}(\vc{0}) {A}^{(0)}_{10}\big),\\
 & \vc{\psi}^{(2)}(\theta_{2}) = e^{\theta_{2}} \big(\vc{\pi}(1,1) {A}^{(2)}_{(-1)+}(\theta_{2}) + \vc{\pi}(0,1) ({A}^{(2)}_{0+}(\theta_{2}) - I) +\vc{\pi}(1,0) {A}^{(0)}_{(-1)1} + \vc{\pi}(\vc{0}) {A}^{(0)}_{01}\big).
\end{align*}
}\end{lemma}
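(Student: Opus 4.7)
The plan is to derive \eqn{se 6v} directly from the stationary equation $\vc{\pi} P = \vc{\pi}$ by writing it as a system of block balance equations, one per region $\sr{U}_{ss'}$, converting each block equation into a moment generating function identity, and then applying the canonical form reduction of \sectn{QBD} to each of the two coordinate axes so as to absorb $\vc{\varphi}^{(+0)}(\theta_{1})$ and $\vc{\varphi}^{(0+)}(\theta_{2})$ into $C^{(1)}_{**}(\vc{\theta})$ and $C^{(2)}_{**}(\vc{\theta})$.

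First I would write the balance equation region by region. For $(i,j) \in \sr{U}_{++}$ it reads $\vc{\pi}(i,j) = \sum_{k,\ell \in \dd{H}} \vc{\pi}(i-k,j-\ell) A^{(s,s')}_{k\ell}$, where the source region $(s,s')$ depends on $(i-k,j-\ell)$. Multiplying by $e^{i\theta_{1}+j\theta_{2}}$ and summing over $(i,j) \in \sr{U}_{++}$, and using the homogeneity identities $A^{(+1)}_{jk}=A_{jk}$, $A^{(1+)}_{jk}=A_{jk}$, $A^{(++)}_{jk}=A_{jk}$, collects the interior contributions into $\vc{\varphi}^{(++)}(\vc{\theta})(I-A_{**}(\vc{\theta}))$ and leaves boundary residuals from sources on the axes or on the inner edges $j=1, i=1$. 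The assumption $\varphi(\vc{\theta}) < \infty$ guarantees absolute convergence of every double sum and therefore justifies the interchange of summation. Repeating this procedure on $\sr{U}_{+0}, \sr{U}_{+1}, \sr{U}_{0+}, \sr{U}_{1+}$ yields analogous MGF equations involving the axis matrices $A^{(+0)}_{*k}(\theta_{1}), A^{(+1)}_{*k}(\theta_{1}), A^{(0+)}_{k*}(\theta_{2}), A^{(1+)}_{k*}(\theta_{2})$.

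Next I would eliminate $\vc{\varphi}^{(+0)}(\theta_{1})$ (and symmetrically $\vc{\varphi}^{(0+)}(\theta_{2})$). The MGF equation contributed by $\sr{U}_{+0}$ takes the form
\[
\vc{\varphi}^{(+0)}(\theta_{1})\bigl(I - A^{(+0)}_{*0}(\theta_{1})\bigr) = \vc{\varphi}^{(+1)}(\theta_{1}) A^{(+1)}_{*(-1)}(\theta_{1}) + (\text{origin-corner corrections}),
\]
and the finiteness of $\varphi(\vc{\theta})$ forces $c_{p}(A^{(+0)}_{*0}(\theta_{1})) > 1$ by the argument used in \lem{super-h 2} to establish $c_{p}(B_{0}) > 1$. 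Hence $(I - A^{(+0)}_{*0}(\theta_{1}))^{-1}$ exists as a finite nonnegative matrix; solving for $\vc{\varphi}^{(+0)}(\theta_{1})$ and substituting into the $\sr{U}_{+1}$ equation produces exactly the combination $A_{*+}(\vc{\theta}) + A^{(+1)}_{*(-1)}(\theta_{1})(I - A^{(+0)}_{*0}(\theta_{1}))^{-1} A^{(+0)}_{*1}(\theta_{1})$, which by \eqn{A tilde 1} is $C^{(1)}_{**}(\vc{\theta})$. The symmetric elimination on the other axis gives $C^{(2)}_{**}(\vc{\theta})$.

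Finally I would add the three reduced equations, weighting the $\sr{U}_{+1}$ equation by $e^{\theta_{2}}$ and the $\sr{U}_{1+}$ equation by $e^{\theta_{1}}$ to align the MGFs as they appear in \eqn{se 6v}, and then collect the remaining terms (the stationary probabilities at $(0,0), (0,1), (1,0), (1,1)$ together with the residuals $\vc{\psi}^{(1)}(\theta_{1})$ and $\vc{\psi}^{(2)}(\theta_{2})$ left over after the axis elimination) into $\vc{\psi}^{(0)}(\vc{\theta})$, and verify that the result matches the stated formula. The identity is then algebraic, holding whenever $\varphi(\vc{\theta})<\infty$, and requires no further probabilistic input. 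The main obstacle is not conceptual but combinatorial: the convolution structure produces residuals at each of the inner edges $i\in\{1,2\}, j\in\{1,2\}$, and every corner state must be counted with precisely the right multiplicity so that the $\vc{\pi}(i,j)$ contributions for $i,j\in\dd{H}_{+}$ collapse exactly into the expression stated for $\vc{\psi}^{(0)}(\vc{\theta})$.
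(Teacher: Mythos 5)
Your plan matches the paper's proof: both take moment generating functions of the stationary balance equations, eliminate $\vc{\varphi}^{(+0)}(\theta_1)$ and $\vc{\varphi}^{(0+)}(\theta_2)$ by inverting $I - A^{(+0)}_{*0}(\theta_1)$ and $I - A^{(0+)}_{0*}(\theta_2)$, and substitute to produce $C^{(1)}_{**}$ and $C^{(2)}_{**}$ exactly as in the one-dimensional canonical-form reduction. One small correction to your final paragraph: you need a fourth weighted summand, the $\sr{U}_{11}$ corner balance, which supplies the $e^{\theta_1+\theta_2}\vc{\pi}(1,1)\,I$ part of $\vc{\psi}^{(0)}(\vc{\theta})$ — the paper avoids enumerating it by summing over all of $\sr{U}_+$ at once and only afterwards decomposing $\vc{\varphi}^{(+)}(\vc{\theta})$ into its $\sr{U}_{++},\sr{U}_{+1},\sr{U}_{1+},\sr{U}_{11}$ pieces.
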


\begin{remark} {\rm
\label{rem:MGF se 1}
  \eqn{se 6v} reduces the stationary equations to those for the 2d-QBD whose random walk component is on $\sr{U}_{+}$. Obviously, all the complexities are pushed into $C^{(i)}_{**}(\vc{\theta})$ and $\vc{\psi}^{(0)}(\vc{\theta})$.
}\end{remark}

\begin{proof}
Assume that $Z_{0}$ has the stationary distribution $\pi$, then $Z_{n+1} \equiv (\vc{L}_{n+1}, J_{n+1})$ and $Z_{n} \equiv (\vc{L}_{n}, J_{n})$ have the same distribution $\pi$. Hence, recalling that $\dd{H} = \{0, 1, -1\}$ and taking the moment generating functions of \eqn{L n} for $J_{n} = k \in \sr{V}_{+}$, we have
\begin{align}
\label{eqn:se 1}
  \lefteqn{\varphi^{(+)}_{k}(\vc{\theta}) = \sum_{k' \in \sr{V}_{+}} \Big( \sum_{i,j \in \dd{H}} e^{i\theta_{1}+j\theta_{2}}\varphi^{(++)}_{k'}(\vc{\theta}) [{A}_{ij}]_{k'k} + \sum_{i,j \in \dd{H}_{+}} e^{i\theta_{1}+j\theta_{2}} e^{\theta_{1}+\theta_{2}} \pi(1,1,k') [{A}_{ij}]_{k'k}} \hspace{5ex} \nonumber\\
  & \qquad + \sum_{i \in \dd{H},j \in \dd{H}_{+}} e^{i\theta_{1}+j\theta_{2}} e^{\theta_{2}} \varphi^{(+1)}_{k'}(e^{\theta_{1}}) [{A}_{ij}]_{k'k} + \sum_{i \in \dd{H}_{+}, j \in \dd{H}} e^{i\theta_{1}+j\theta_{2}} e^{\theta_{1}} \varphi^{(1+)}_{k'}(e^{\theta_{2}}) [{A}_{ij}]_{k'k} \Big) \nonumber\\
  & \quad + \sum_{k' \in \sr{V}_{1}} \Big(\sum_{i \in \dd{H}} e^{i\theta_{1}+\theta_{2}} \varphi^{(+0)}_{k'}(\theta_{1}) [{A}^{(1)}_{i1}]_{k'k} + \sum_{i \in \dd{H}_{+}} e^{i\theta_{1}+\theta_{2}} e^{\theta_{1}} \pi(1,0,k') [{A}^{(1)}_{i1}]_{k'k} \Big) \nonumber\\
  & \quad  + \sum_{k' \in \sr{V}_{2}} \Big(\sum_{ j \in \dd{H}} e^{\theta_{1}+j\theta_{2}} \varphi^{(0+)}_{k'}(e^{\theta_{2}}) [{A}^{(2)}_{1j}]_{k'k} + \sum_{j \in \dd{H}_{+}} e^{\theta_{1} + j\theta_{2}} e^{\theta_{2}} \pi(0,1,k') [{A}^{(2)}_{1j}]_{k'k}\Big) \nonumber\\
  & \quad + \sum_{k' \in \sr{V}_{0}} e^{\theta_{1} + \theta_{2}} \pi(0,0,k') [{A}^{(0)}_{11}]_{k'k},
\end{align}
as long as $\varphi^{(+)}_{k}(\vc{\theta})$ and $\varphi^{(w)}_{k}(\vc{\theta})$ for $w=1,2$ exist and are finite for all $k$. Similarly, it follows from \eqn{L n} that, for $k \in \sr{V}_{1}$,
\begin{align}
\label{eqn:se 2}
  \lefteqn{\varphi^{(+0)}_{k}(\theta_{1}) + e^{\theta_{1}} \pi(1,0,k)}\nonumber\\
  & = \sum_{k' \in \sr{V}_{+}} \Big(\sum_{i \in \dd{H}} e^{i\theta_{1}-\theta_{2}} e^{\theta_{2}} \varphi^{(+1)}_{k'}(\theta_{1}) [{A}_{*(i(-1))}^{(1)}]_{k'k} + \sum_{i \in \dd{H}_{+}} e^{i\theta_{1}-\theta_{2}}e^{\theta_{1}+\theta_{2}} \pi(1,1,k') [{A}^{(1)}_{*(i(-1))}]_{k'k}\Big)\nonumber\\
  & \qquad + \sum_{k' \in \sr{V}_{1}} \Big( \sum_{i \in \dd{H}} e^{i\theta_{1}} \varphi^{(+0)}_{k'}(\theta_{1}) [{A}^{(1)}_{i0}]_{k'k} + \sum_{i \in \dd{H}_{+}} e^{\theta_{i}} e^{\theta_{1}} \pi(1,0,k') [{A}^{(1)}_{i0}]_{k'k} \Big) \nonumber\\
  & \qquad + \sum_{k' \in \sr{V}_{0}} \Big( e^{\theta_{1} - \theta_{2}} e^{\theta_{2}} \pi(0,1,k') [{A}^{(0)}_{1(-1)}]_{k'k} + e^{\theta_{1}} \pi(0,0,k') [{A}^{(0)}_{10}]_{k'k}\Big),
\end{align}
and $\varphi^{(0+)}_{k}(\theta_{2})$ for $k \in \sr{V}_{2}$ is symmetric to $\varphi^{(+0)}_{k}(\theta_{1})$  for $k \in \sr{V}_{1}$.

Recalling the matrix notation, ${A}_{+j}(\theta_{1})$, ${A}_{i+}(\theta_{2})$, ${A}^{(1)}_{+j}(\theta_{1})$, ${A}^{(2)}_{i+}(\theta_{2})$ and the vector notation $\vc{\varphi}^{(w)}(\vc{\theta}) $ for $w=+, ++$ and $\vc{\varphi}^{(w')}(\theta_{\ell})$ for $w' = 1, +0, +1$ and $\ell=1$ and for $w' = 2, 0+, 1+$ and $\ell=2$, the stationary equation \eqn{se 1} can be written as
\begin{align}
\label{eqn:se 1v}
  \vc{\varphi}^{(+)}(\vc{\theta}) = \, & \vc{\varphi}^{(++)}(\vc{\theta}) {A}_{**}(\vc{\theta}) + e^{\theta_{1} + \theta_{2}} \vc{\pi}(1,1) {A}_{++}(\vc{\theta}) \nonumber\\
  & + e^{\theta_{2}} (\vc{\varphi}^{(+1)}(\theta_{1}) {A}_{*+}(\vc{\theta}) + \vc{\varphi}^{(+0)}(\theta_{1}) {A}^{(1)}_{*1}(\theta_{1})) \nonumber\\
  & + e^{\theta_{1}} (\vc{\varphi}^{(1+)}(\theta_{2}) {A}_{+*}(\vc{\theta}) + \vc{\varphi}^{(0+)}(\theta_{2}) {A}^{(2)}_{1*}(\theta_{2})) \nonumber\\
  & + e^{\theta_{1}} \vc{\pi}(1,0) {A}^{(1)}_{+1}(\theta_{1}) + e^{\theta_{2}} \vc{\pi}(0,1) {A}^{(2)}_{1+}(\theta_{2}) + e^{\theta_{1} + \theta_{2}} \vc{\pi}(\vc{0}) {A}^{(0)}_{11},
\end{align}
as long as $\vc{\varphi}(\vc{\theta})$ is finite, where $\vc{\varphi}(\vc{\theta})$ is the $\sr{V}_{+}$-dimensional vector whose $k$-th entry is $\varphi_{k}(\vc{\theta})$. Similarly, \eqn{se 2} yields
\begin{align}
\label{eqn:se 2v}
  \vc{\varphi}^{(+0)}(\theta_{1}) + e^{\theta_{1}} \vc{\pi}(1,0) = & \vc{\varphi}^{(+1)}(\theta_{1}) {A}_{*(-1)}^{(1)}(\theta_{1}) + \vc{\varphi}^{(+0)}(\theta_{1}) {A}^{(1)}_{*0}(\theta_{1}) + e^{\theta_{1}} \vc{\pi}(1,1) {A}^{(1)}_{+(-1)}(\theta_{1}) \nonumber\\
  & + e^{\theta_{1}} \big(\vc{\pi}(1,0) {A}^{(1)}_{+0}(\theta_{1}) + \vc{\pi}(0,1) {A}^{(0)}_{1(-1)} + \vc{\pi}(\vc{0}) {A}^{(0)}_{10}\big), \qquad
\end{align}
and by symmetry,
\begin{align}
\label{eqn:se 3v}
  \vc{\varphi}^{(0+)}(\theta_{2}) + e^{\theta_{2}} \vc{\pi}(0,1) = & \vc{\varphi}^{(1+)}(\theta_{2}) {A}_{(-1)*}^{(2)}(\theta_{2}) + \vc{\varphi}^{(0+)} (\vc{\theta}) {A}^{(2)}_{0*}(\theta_{2}) + e^{\theta_{2}} \vc{\pi}(1,1) {A}^{(2)}_{(-1)+}(\theta_{2}) \nonumber\\
  & + e^{\theta_{2}} \big( \vc{\pi}(0,1) {A}^{(2)}_{0+}(\theta_{2}) + \vc{\pi}(1,0) {A}^{(0)}_{(-1)1} + \vc{\pi}(\vc{0}) {A}^{(0)}_{01} \big), \qquad
\end{align}
and 
\begin{align}
\label{eqn:se 4v}
  \vc{\pi}(\vc{0}) = \vc{\pi}(\vc{0}) {A}^{(0)}_{00} + \vc{\pi}(0,1) {A}^{(0)}_{0(-1)} + \vc{\pi}(1,0) {A}^{(0)}_{(-1)0} + \vc{\pi}(1,1) {A}^{(0)}_{(-1)(-1)}.
\end{align}
  Obviously, the equations \eqn{se 1v}--\eqn{se 4v} constitute the full set of the stationary equations, and therefore they uniquely determine the stationary distribution $\pi$ because of the irreducibility.

Assume that $I - {A}^{(1)}_{*0}(\theta_{1})$ and $I - {A}^{(2)}_{0*}(\theta_{2})$ are invertible and recall the definitions of $\vc{\psi}^{(1)}(\theta_{1})$ and $\vc{\psi}^{(2)}(\theta_{2})$, then we can get, from \eqn{se 2v} and \eqn{se 3v},
\begin{align}
\label{eqn:phi 1}
 & \vc{\varphi}^{(+0)}(\theta_{1}) = \big(\vc{\varphi}^{(+1)}(\theta_{1}) {A}_{*(-1)}^{(1)}(\theta_{1}) + \vc{\psi}^{(1)}(\theta_{1}) \big) (I - {A}^{(1)}_{*0}(\theta_{1}))^{-1},\\
\label{eqn:phi 2}
 & \vc{\varphi}^{(0+)}(\theta_{2}) = \big(\vc{\varphi}^{(1+)}(\theta_{2}) {A}_{(-1)*}^{(2)}(\theta_{2}) + \vc{\psi}^{(2)}(\theta_{2}) \big) (I - {A}^{(2)}_{0*}(\theta_{2}))^{-1}.
\end{align}

Substituting these $\vc{\varphi}^{(+0)}(\theta_{1})$ and $\vc{\varphi}^{(0+)}(\theta_{2})$ into \eqn{se 1v} and using the vector version of \eqn{se 1}: 
\begin{align*}
  \vc{\varphi}^{(+)}(\vc{\theta}) = \vc{\varphi}^{(++)}(\vc{\theta}) + e^{\theta_{2}} \vc{\varphi}^{(+1)}(\theta_{1}) + e^{\theta_{1}} \vc{\varphi}^{(1+)}(\theta_{2}) + e^{\theta_{1} + \theta_{2}} \vc{\pi}(1,1),
\end{align*}
we have
\begin{align*}
  & \vc{\varphi}^{(++)}(\vc{\theta}) (I - {A}_{**}(\vc{\theta})) \nonumber\\
  & + e^{\theta_{2}} \vc{\varphi}^{(+1)} (\theta_{1}) \big(I - ({A}_{*+}(\vc{\theta}) + {A}^{(1)}_{*(-1)}(\theta_{1}) (I - {A}^{(1)}_{*0}(\theta_{1}))^{-1} {A}^{(1)}_{*1}(\theta_{1})) \big) \nonumber\\
  & + e^{\theta_{1}} \vc{\varphi}^{(1+)} (\theta_{2}) \big(I - ({A}_{+*}(\vc{\theta}) + {A}^{(1)}_{(-1)*}(\theta_{2}) (I - {A}^{(2)}_{0*}(\theta_{2}))^{-1} {A}^{(2)}_{1*}(\theta_{2})) \big) + \vc{\psi}^{(0)}(\vc{\theta}) = 0.
\end{align*}
  Recalling the definitions of $\tilde{C}^{(i)}(\vc{\theta})$, this yields  \eqn{se 6v}.
\end{proof}

\subsection{Proof of \lem{domain 0}}
\label{app:domain 0}

In \lem{MGF se 1}, we have assumed that the moment generating functions for the stationary distribution are finite. We can not use this finiteness to prove \lem{domain 0}. Nevertheless, \lem {MGF se 1} is useful in the proof of \lem{domain 0}. This is because we will use its inequality version under some extra conditions in a similar way to Lemma 4 of \citet{KobaMiya2014}. A key idea is the following lemma.

\begin{lemma}
\label{lem:domain 1}
  Assume that $\vc{\theta} \in \dd{R}^{2}$ satisfies one of the following conditions.
\begin{mylist}{0}
\item [(a)] $\vc{\theta} \in \Gamma^{(2d)}_{+}$ and $|\vc{\varphi}^{w}(\vc{\theta})| < \infty$ for $w = +1, 1+$,
\item [(b)] $\vc{\theta} \in \Gamma^{(2d)}_{1+}$ and $|\vc{\varphi}^{(1+)} (\theta_{2})| < \infty$,
\item [(c)] $\vc{\theta} \in \Gamma^{(2d)}_{2+}$ and $|\vc{\varphi}^{(+1)} (\theta_{1})| < \infty$,
\end{mylist}
where $|\vc{a}| = \sum_{i} |a_{i}|$ for vector $\vc{a}$ whose $i$-th entry is $a_{i}$. Then,
\begin{align}
\label{eqn:stationary inequality 1}
  \lefteqn{\vc{\varphi}^{(++)}(\vc{\theta}) (I - {A}_{**}(\vc{\theta}))} \hspace{1ex}\nonumber\\
  & + e^{\theta_{2}} \vc{\varphi}^{(+1)} (\theta_{1}) \big(I - C^{(1)}_{**}(\vc{\theta}) \big) + e^{\theta_{1}} \vc{\varphi}^{(1+)} (\theta_{2}) \big(I -C^{(2)}_{**}(\vc{\theta}) \big) + \vc{\psi}^{(0)}(\vc{\theta}) \le 0, \hspace{5ex}
\end{align}
and therefore $\vc{\theta} \in \sr{D}$.
\end{lemma}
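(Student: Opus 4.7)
The plan is to reduce to the exact stationary equation \eqn{se 6v} of \lem{MGF se 1} via a truncation argument, then use the positive vector $\vc{h}$ supplied by the hypothesis to turn the resulting matrix identity into a scalar inequality from which both \eqn{stationary inequality 1} and the membership $\vc{\theta} \in \sr{D}$ can be read off.

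First I would introduce truncated generating functions $\vc{\varphi}^{(++,N)}(\vc{\theta})$ by inserting the indicator $\mathbf{1}(L_{1} \le N,\, L_{2} \le N)$ into the definition of $\vc{\varphi}^{(++)}(\vc{\theta})$, and repeat the one-step conditioning argument that produced \eqn{se 1v}--\eqn{se 4v}. Since the truncated sums are bounded, all quantities are finite and no convergence issue arises. The outcome is a truncated analog of \eqn{se 6v} in which the only new ingredient is a boundary-flux term corresponding to transitions that cross $\{L_{1}=N\}\cup\{L_{2}=N\}$. Because that flux term represents a mass-loss, it can be arranged to be nonnegative after being placed on the appropriate side, yielding a truncated inequality of exactly the form of \eqn{stationary inequality 1}.

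Next I would right-multiply this truncated inequality by the vector $\vc{h}>\vc{0}$ given in the hypothesis. In case (a), the condition $A_{**}(\vc{\theta})\vc{h}\le\vc{h}$ makes $(I-A_{**}(\vc{\theta}))\vc{h}\ge\vc{0}$; the boundary contributions $e^{\theta_{2}}\vc{\varphi}^{(+1)}(\theta_{1})(I-C^{(1)}_{**}(\vc{\theta}))\vc{h}$ and $e^{\theta_{1}}\vc{\varphi}^{(1+)}(\theta_{2})(I-C^{(2)}_{**}(\vc{\theta}))\vc{h}$ are finite scalars by the assumed finiteness of the two boundary MGFs; and $\vc{\psi}^{(0)}(\vc{\theta})\vc{h}$ depends only on the finitely many probabilities $\vc{\pi}(i,j)$ with $i,j\in\{0,1\}$ and is therefore finite. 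Cases (b) and (c) are handled analogously, with the extra superharmonic condition $C^{(i)}_{**}(\vc{\theta})\vc{h}\le\vc{h}$ making the associated boundary term nonnegative rather than merely finite; this is precisely what removes the need for the finiteness assumption on the \emph{other} boundary MGF. Taking $N\to\infty$ and invoking monotone convergence for $\vc{\varphi}^{(++,N)}(\vc{\theta})\vc{h}\uparrow\vc{\varphi}^{(++)}(\vc{\theta})\vc{h}$ produces a uniform bound that forces $\vc{\varphi}^{(++)}(\vc{\theta})\vc{h}<\infty$. Since $\vc{h}$ is strictly positive and finite-dimensional, this entrywise yields $\varphi(\vc{\theta})<\infty$, and taking the limit in the truncated inequality delivers \eqn{stationary inequality 1}.

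The main obstacle will be setting up the truncation cleanly: identifying all transitions that cross the truncation boundary, checking that the residual term is nonnegative (or can be moved with the correct sign) after pairing with $\vc{h}$, and justifying the monotone passage to the limit in each of the four blocks of \eqn{stationary inequality 1} separately. A secondary subtlety is upgrading $\varphi(\vc{\theta})<\infty$ to the stronger statement $\vc{\theta}\in\sr{D}$, which requires the existence of some $\vc{\theta}'>\vc{\theta}$ also satisfying the hypotheses. This should follow from the openness of $\Gamma^{(2d)}_{+}$ (resp.\ $\Gamma^{(2d)}_{i+}$) together with the continuity of $\vc{h}^{(A_{**})}(\vc{\theta})$ in $\vc{\theta}$ and the monotone persistence of the boundary-finiteness conditions under strict increase of $\vc{\theta}$.
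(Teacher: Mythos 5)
Your core plan---truncate the state space, derive a truncated version of the stationary equation by one-step conditioning, bound the boundary-flux term, right-multiply by the positive vector $\vc{h}$ supplied by the hypothesis, and pass to the limit by monotone convergence---is essentially the paper's approach. The paper defers the truncation machinery to Lemma~4 of Kobayashi--Miyazawa (2014) with the remark that one ``can apply the exactly same arguments,'' and the way you deploy the hypotheses in the three cases (a)--(c) is exactly right: in (a) both boundary MGFs are assumed finite so $(I-A_{**}(\vc{\theta}))\vc{h}\ge\vc{0}$ is the only sign information needed, while in (b)/(c) the extra superharmonic condition $C^{(i)}_{**}(\vc{\theta})\vc{h}\le\vc{h}$ flips the associated boundary term to nonnegative and substitutes for the missing finiteness assumption.

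Your final paragraph, however, contains two genuine errors. First, the set $\Gamma^{(2d)}_{+}$ is \emph{closed}, not open; the proof of \lem{Gamma convex 1} in \app{Gamma convex 1} establishes closedness of the one-dimensional analogue, and the two-dimensional set is closed for the same reason. Second, and more seriously, the claimed ``monotone persistence of the boundary-finiteness conditions under strict increase of $\vc{\theta}$'' runs in the \emph{wrong} direction: moment generating functions of nonnegative random vectors are nondecreasing in $\vc{\theta}$, so finiteness of $\vc{\varphi}^{(+1)}(\theta_{1})$ becomes \emph{harder}, not easier, to guarantee as $\theta_{1}$ increases. The upgrade from $\varphi(\vc{\theta})<\infty$ to $\vc{\theta}\in\sr{D}$ cannot be obtained from a local perturbation argument as you describe; in the paper this is handled in the proof of \lem{domain 0}, where a bootstrapping iteration enlarges $\vc{\theta}$ step by step, \emph{establishing} the required boundary finiteness at each stage rather than inheriting it by monotonicity. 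A smaller gap: to pass from the uniform bound on $\vc{\varphi}^{(++,N)}(\vc{\theta})(I-A_{**}(\vc{\theta}))\vc{h}$ to $|\vc{\varphi}^{(++)}(\vc{\theta})|<\infty$ you need $(I-A_{**}(\vc{\theta}))\vc{h}$ to be strictly positive (the paper writes $A_{**}(\vc{\theta})\vc{h}<\vc{h}$); mere membership in $\Gamma^{(2d)}_{+}$, which includes $\gamma^{(A_{**})}(\vc{\theta})=1$, is not enough to grant this, so you should explain why the application can be confined to $\gamma^{(A_{**})}(\vc{\theta})<1$.
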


This lemma is slightly different from Lemma 4 of \cite{KobaMiya2014} because we here have background states. However, we can apply the exactly same arguments to derive \eqn{stationary inequality 1} from the one step transition relation \eqn{L n} for each fixed background state under the stationary distribution. Hence, $A _{**}(\vc{\theta}) \vc{h} < \vc{h}$ ($C^{(i)} _{**}(\vc{\theta}) \vc{h} < \vc{h}$) and
\begin{align*}
  \vc{\varphi}^{(++)}(\vc{\theta}) (I - A _{**}(\vc{\theta})) \vc{h} < \infty, \qquad (\vc{\varphi}^{(w(i))}(\vc{\theta}) (I - C^{(i)} _{**}(\vc{\theta})) \vc{h} < \infty),
\end{align*}
where $w(1) = +1$ and $w(2) = 1+$, implies that $|\vc{\varphi}^{(++)}(\vc{\theta})| < \infty$ ($|\vc{\varphi}^{(i+)}(\vc{\theta})| < \infty$, respectively). This completes the proof of \lem{domain 1}.

Similar to the proof of Theorem 1 of \cite{KobaMiya2014} (see Section 4.3 there), it is not hard to see that \lem{domain 1} yields \lem{domain 0}.

\subsection{The proof of \thr{upper bound 1}}
\label{app:upper bound 1}

For each $u, x > 0$, we have, for $u \vc{c} \in \Gamma_{\tau}^{(2d)}$,
\begin{align*}
  e^{ux} \dd{P}(\br{\vc{L}, \vc{c}} > x) \le \dd{E}(e^{\br{\vc{L}, u\vc{c}}} 1(\br{\vc{L}, \vc{c}} > x)) \le \varphi(u\vc{c}).
\end{align*}
Taking logarithm of both sides of this inequality, we get
\begin{align*}
  u + \frac 1x \log \dd{P}(\br{\vc{L}, \vc{c}} > x) \le \frac 1x \log \varphi(u\vc{c}).
\end{align*}
This yields
\begin{align*}
  \limsup_{x \to \infty} \frac 1x \log \dd{P}(\br{\vc{L}, \vc{c}} > x) \le - u
\end{align*}
as long as $u \vc{c} \in \Gamma_{\tau}^{(2d)}$, and therefore
\begin{align*}
  \lefteqn{\limsup_{x \to \infty} \frac 1x \log \dd{P}(\br{\vc{L}, \vc{c}} > x, J=k)} \nonumber \hspace{10ex}\\
  & \le  \limsup_{x \to \infty} \frac 1x \log \dd{P}(\br{\vc{L}, \vc{c}} > x) \le - \sup \{u \ge 0; u \vc{c} \in \Gamma_{\tau}^{(2d)}\}.
\end{align*}

\section{\large The proof of \lem{lower bound 2}}
\setnewcounter
\label{app:lower bound 2}

Similar to Lemma 4.2 of \cite{KobaMiya2014}, we can apply the permutation arguments in Lemma 5.6 of \cite{BoroMogu2001} twice. For this, we use a Markov modulated two dimensional random walk $\{(\vc{Y}_{n}, J_{n})\}$, whose increments $\vc{X}_{n+1} \equiv \vc{Y}_{n+1} - \vc{Y}_{n}$ have the following conditional distribution.
\begin{align*}
  \dd{P}(\vc{X}_{n+1} = \vc{u}, J_{n+1} = j|J_{n} = i) = [A_{\vc{u}}]_{ij}, \qquad \vc{u} \in \dd{H}^{2}, i,j \in \sr{V}_{+}.
\end{align*}
We here recall that $\dd{H} = \{0, \pm 1\}$. For each $n \ge 1$, we permute the Markov modulated random walk $\{(\vc{Y}_{\ell}, J_{\ell}), \ell=0, 1, \ldots, n\}$ starting with $\vc{Y}_{0} = \vc{0}$, and define $\{(\vc{Y}^{(m)}_{\ell}, J^{(m)}_{\ell}); \ell = 0, 1,\ldots, n\}$ for $m = 1,2,\ldots, n$ as 
\begin{align*}
 & \vc{Y}^{(m)}_{0} = \vc{0}, \vc{Y}^{(m)}_{1} = \vc{X}_{m+1}, \vc{Y}^{(m)}_{2} =  \vc{X}_{m+1} + \vc{X}_{m+2}, \ldots, \vc{Y}^{(m)}_{n-m} = \vc{X}_{m+1} + \ldots + \vc{X}_{n},\\
 & \vc{Y}^{(m)}_{n-m+1} = \vc{X}_{m+1} + \ldots + \vc{X}_{n} + \vc{X}_{1},\\
 & \ldots \\
 & \vc{Y}^{(m)}_{n} = \vc{X}_{m+1} + \ldots + \vc{X}_{n} + \vc{X}_{1} + \vc{X}_{2} + \ldots + \vc{X}_{m},\\
 & J^{(m)}_{\ell} = \left\{\begin{array}{ll}
  J_{m+\ell}, \quad & \ell=0, 1, \ldots, n-m, \\
  J_{\ell - (n-m)}, \quad & \ell=n-m+1, \ldots, n.
  \end{array} \right.
\end{align*}
Obviously, $\{(\vc{Y}^{(m)}_{\ell}, J^{(m)}_{\ell}); \ell = 0, 1,\ldots, n\}$ and $J^{(m)}_{0} = J_{m} = J^{(m)}_{n}$ for $m = 1,2,\ldots, n$ have the same joint distribution for all $m$ under the probability measure in which $\{J_{n}\}$ is stationary. We denote this probability measure by $\dd{P}_{\nu_{+}}$, where $\nu_{+}$ is the stationary distribution of the background process $\{J_{n}\}$. We next consider the following event for $n \ge 1$, $1 \le m \le n$, $\vc{x} > \vc{0}$, $j \in \sr{V}_{+}$ and $B \in \sr{B}(\dd{R}_{+}^{2})$.
\begin{align*}
 & E_{+}(n, m, B) = \{ \min_{1 \le \ell \le n} Y^{(m)}_{\ell 1} > 0, \min_{1 \le \ell \le n} Y^{(m)}_{\ell 2} > 0, \vc{Y}^{(m)}_{n} \in B, J_{0} =J_{n} \},\\
 & E_{2}(n, m, B) = \{ \min_{1 \le \ell \le n} Y^{(m)}_{\ell 2} > 0, \vc{Y}^{(m)}_{n} \in B, J_{0} =J_{n} \}.
\end{align*}
Then, we have
\begin{align*}
 & \cup_{m=1}^{n} E_{+}(n,m,B) \supset E_{2}(n,M,B), \quad \mbox{ for some } M \in \{1,2, \ldots, n\},\\
 & \cup_{m'=1}^{n} E_{2}(n,m',B) \supset \{ \vc{Y}_{n} \in B, J_{0} =J_{n} \},
\end{align*}
where $M$ may be chosen so that $Y^{(m)}_{\ell 1}$ for $m=0,1,\ldots, n$ attains the minimum at $m=M$.

Since $E_{+}(n,m,B)$ has the same probability for any $m$ under $\dd{P}_{\nu_{+}}$ and similarly $E_{2}(n,m',B)$ does so, we have
\begin{align}
\label{eqn:lower bound 2}
  \lefteqn{\dd{P}_{\nu_{+}}( \min_{1 \le \ell \le n} Y_{\ell 1} > 0, \min_{1 \le \ell \le n} Y_{\ell 2} > 0, \vc{Y}_{n} \in B, J_{n} = J_{0})} \hspace{10ex} \nonumber \\
  & \ge \frac 1n \dd{P}_{\nu_{+}}( \min_{1 \le \ell \le n} Y_{\ell 2} > 0, \vc{Y}_{n} \in B, J_{n} = J_{0} ) \nonumber\\
  & \ge \frac 1{n^{2}} \dd{P}_{\nu_{+}}( \vc{Y}_{n} \in B, J_{n} = J_{0}, \vc{Y}_{0} = \vc{0}).
\end{align}
  We next note the Markov modulated version of the well known Cram\'{e}r's theorem (see Theorem 1 of \cite{NeyNumm1987a}). For this, define the Fenchel-Legendre transform of $\log \gamma^{(A_{**})}(\vc{\theta})$ as
\begin{align*}
  \Lambda^{*}(\vc{x}) = \sup_{\vc{\theta} \in \dd{R}^{2}}\{\br{\vc{\theta},\vc{x}} - \log \gamma^{(A_{**})}(\vc{\theta}) \},
\end{align*}
then we have, for any open set $G$ in $\dd{R}^{2}$,
\begin{align}
\label{eqn:Cramer 1}
  \liminf_{n \to \infty} \frac 1{n} \log \dd{P}(\vc{Y}_{n} \in n G, J_{n} = j | J_{0} = i) \ge - \Lambda^{*}(\vc{z}), \qquad i,j \in \sr{V}_{+},  \vc{z} \in G.
\end{align}
  
  Let $\sr{S}_{++} = \sr{U}_{++} \times \sr{V}_{+}$, and let $\sigma_{0} = \inf\{ \ell \ge 1; \vc{L}_{\ell} \in \sr{S} \setminus \sr{S}_{++} \}$. Since the random walk $\{(\vc{Y}_{\ell}, J_{\ell})\}$ is stochastically identical with $\{(\vc{L}_{\ell}, J_{\ell})\}$ as long as they are in $\sr{S}_{++}$, we have, for $\vc{y} \in \dd{Z}_{+}^{2}$ and $G$ such that $G + \vc{z} \subset G$ for each $\vc{z} \in \dd{Z}_{+}^{2}$,
\begin{align}
\label{eqn:lower bound 4}
  \lefteqn{\dd{P}_{\nu_{+}}( \vc{L}_{n} \in n G, \sigma_{0} > n, J_{n} = J_{0} | \vc{L}_{0} = \vc{y} )} \hspace{10ex} \nonumber\\
  & = \dd{P}_{\nu_{+}}( \vc{Y}_{n} \in n G - \vc{y}, \sigma_{0} > n, J_{n} = J_{0} ) \nonumber\\
  & \ge \frac 1{n^{2}} \dd{P}_{\nu_{+}}( \vc{Y}_{n} \in n G, J_{n} = J_{0} ). 
\end{align}

Recall that $\nu \equiv \{\nu(\vc{z}, j); (\vc{z},i) \in \sr{S}\}$ denotes the stationary distribution. For $\vc{z}_{0} = (2,2)$, let
\begin{align*}
  d = \min_{i \in \sr{V}_{+}} \nu(\vc{z}_{0}, i),
\end{align*}
then $d > 0$ since $\{(\vc{L}_{\ell}, J_{\ell})\}$ is irreducible and $\sr{V}_{+}$ is a finite set. Denote the normalized distribution of $\pi$ restricted on $\sr{S} \setminus \sr{S}_{++}$ by $\pi_{0}$, and denote the probability measure for $\{(\vc{L}_{\ell}, J_{\ell})\}$ with the initial distribution $\pi_{0}$ by $\dd{P}_{\pi_{0}}$. Let
\begin{align*}
  G = \{ \vc{\theta} \in \dd{R}^{2}; \vc{\theta} > \vc{c} \},
\end{align*}
which satisfies the requirement of \eqn{lower bound 4}. Then, it follows from the occupation measure representation of the stationary distribution and \eqn{lower bound 2} with $B = G$ that, for any $m, n \ge 1$, $j \in \sr{V}_{+}$ and $\vc{z}_{0} \equiv (2,2) \in \sr{S}_{++}$,
\begin{align*}
  \dd{P}( \vc{L} & \in n G) = \frac 1{E_{\pi}(\sigma_{0})} \sum_{\ell = 1}^{\infty} \dd{P}_{\pi_{0}}( \vc{L}_{\ell} \in n G, \sigma_{0} > \ell) \\
  & \ge \frac 1{E_{\pi}(\sigma_{0})} \dd{P}_{\pi_{0}}( \vc{L}_{m} \in n G, J_{m} = J_{0}, \sigma_{0} > m, \vc{L}_{0} = \vc{z}_{0} ) \\
  & \ge \frac 1{E_{\pi}(\sigma_{0})} \sum_{i \in \sr{V}_{+}} \dd{P}_{\nu_{+}}( \vc{L}_{m} - \vc{L}_{0} \in n G, J_{m} = J_{0}, \sigma_{0} > m| \vc{L}_{0} = \vc{z}_{0}, J_{0} = i) \pi_{0}(\vc{z}_{0}, i) \\
  & \ge \frac d{E_{\pi}(\sigma_{0})} \sum_{i \in \sr{V}_{+}} \dd{P}_{\nu_{+}}( \vc{L}_{m} - \vc{L}_{0} \in n G, J_{m} = J_{0}, \sigma_{0} > m, J_{0} = i| \vc{L}_{0} = \vc{z}_{0}) \\
  & \ge \frac d{E_{\pi}(\sigma_{0})} \dd{P}_{\nu_{+}}( \vc{L}_{m} \in n G, J_{m} = J_{0}, \sigma_{0} > m | \vc{L}_{0} = \vc{z}_{0}) \\
  & = \frac d{E_{\pi}(\sigma_{0})} \dd{P}_{\nu_{+}}( \vc{Y}_{m} \in n G, J_{m} = J_{0}, \sigma_{0} > m)\\
  & \ge \frac d{m^{2} E_{\pi}(\sigma_{0})} \dd{P}_{\nu_{+}}( \vc{Y}_{m} \in n G, J_{m} = J_{0})\\
  & \ge \frac d{m^{2} E(\sigma_{0})} \dd{P}( \vc{Y}_{m} \in n G, J_{m} = j| J_{0} = j) \nu_{+}(j),
\end{align*}
where we have used the facts that the distribution of $\{(\vc{L}_{\ell}, J_{\ell})\}$ is unchanged under the conditional probability measures $\dd{P}_{\pi_{0}}$ and $\dd{P}_{\nu_{+}}$ given $(\vc{L}_{0}, J_{0})$, and similarly $\{\vc{Y}_{\ell}\}$ is unchanged for $\dd{P}_{\nu_{0}}$ and $\dd{P}$ given $J_{0}$.

  Since $\vc{x} \in n G$ is equivalent to $\vc{x} > \vc{c}$, taking logarithms for both sides of the above inequality and letting $m, n \to \infty$ in such a way that $n/m \to t$ for each fixed $t > 0$, \eqn{Cramer 1} yields
\begin{align*}
  \liminf_{n \to \infty} \frac 1{n} \log \dd{P}( \vc{L}> n \vc{c} ) &\ge  \lim_{n \to \infty} \frac m{n} \frac 1m \log \dd{P}\left( \left. \vc{Y}_{m} > m \frac nm \vc{c}, J_{m} = j \right| J_{0} = j \right) \\
  &\ge - \frac 1t \Lambda^{*}(t\vc{c}).
\end{align*}
Since $t> 0$ can be arbitrary, this implies that
\begin{align*}
  \liminf_{x \to \infty} \frac 1{x} \log \dd{P}( \vc{L} > x \vc{c} ) \ge - \inf_{ t > 0} \frac 1t \Lambda(t\vc{c}) = - \sup\{ \br{\vc{\theta}, \vc{c}}; \vc{\theta} \in \Gamma^{(2d)}_{+} \},
\end{align*}
  where the last equality is obtained from Theorem 1 of \cite{BoroMogu1996a} (see also Theorem 13.5 of \cite{Rock1970}).
  
  It remains to prove that $\vc{\theta} \not\in \ol{\Gamma}_{\max}$ implies $\varphi(\vc{\theta}) = \infty$, but its proof is exactly the same as that of Lemma 4.2 of \cite{KobaMiya2014} except for a minor modification. So, we omit it.

\section{\large One dimensional QBD and lower bounds}
\setnewcounter
\label{app:one}

In this section, we prove \thr{lower bound 1}. For this, we apply the Markov additive approach given in Section 5.5 of \cite{Miya2011}. This approach is also taken by \citet{Ozaw2013}, which is essentially the same as that of \citet{Miya2009}. We first formulate the 2d-QBD process as a 1-dimensional QBD process with infinitely many background states, taking one of the half coordinate axis of the lattice quarter plane as level. There are two such QBD processes. Since they are symmetric, we mainly consider the case that the first coordinate is taken as level. Our arguments are parallel to those of \citet{Ozaw2013}, but answers are more tractable because of \thr{super-h 2}.

\subsection{Convergence parameter of the rate matrix}
\label{sect:convergence}

We first consider the convergence parameters of the so called rate matrix $R^{(s)}$ of the one dimensional QBD process $\{(L^{(s)}_{n}, \vc{J}^{(s)}_{n})\}$ for $s =1,2$. This $R^{(s)}$ is defined as the minimal nonnegative solution of the matrix quadratic equation:
\begin{align*}
  R^{(s)} = (R^{(s)})^{2} Q_{-1} + R^{(s)} Q_{0} + Q_{1}.
\end{align*}
Since arguments are symmetric for $s =1$ and $s=2$, we will mainly consider the case for $s=1$. As is well known, the stationary distribution of $P^{(1)}$ is given by
\begin{align}
\label{eqn:geometric s}
  \vc{\pi}^{(1)}_{n} = \vc{\pi}^{(1)}_{1} (R^{(1)})^{n-1}, \qquad n \ge 1,
\end{align}
where $\vc{\pi}^{(1)}_{n} = \{\pi^{(1)}(n,j,k); k \in \sr{V}_{1} \mbox{ for } j=0, k \in \sr{V}_{+} \mbox{ for } j\ge 1 \}$. Then, we can see that the reciprocal of the convergence parameter $c_{p}(R^{(1)})$ gives a lower bound for the decay rate of $\pi^{(1)}(n,j,k)$ for each fixed $j, k$ (e.g., see \cite{Miya2011} for details).

As shown in \cite{Miya2011}, this convergence parameter problem can be reduced to find the right (or left) sub-invariant vector of the matrix moment generating function $Q^{(1)}_{*}(\theta_{1})$ by the Wiener-Hopf factorization for the Markov additive process with transition matrix $\ul{P}^{(1)}$.

Recall that
\begin{align*}
  Q^{(1)}_{*}(\theta_{1}) = \left (\begin{matrix}
A^{(1)}_{*0}(\theta_{1}) & A^{(1)}_{*1}(\theta_{1}) & 0 & 0 & 0 & \cdots \cr
A^{(1)}_{*(-1)}(\theta_{1}) & A_{*0}(\theta_{1}) & A_{*1}(\theta_{1}) & 0 & 0 & \cdots  \cr
 0 & A_{*(-1)}(\theta_{1}) & A_{*0}(\theta_{1}) & A_{*1}(\theta_{1}) & 0 & \cdots \cr
 0 & 0 & A_{*(-1)}(\theta_{1}) & A_{*0}(\theta_{1}) & A_{*1}(\theta_{1}) & \cdots \cr
 \vdots & \vdots & \ddots & \ddots & \ddots & \cdots 
 \end{matrix} \right ).
\end{align*}
Let
\begin{align}
\label{eqn:C *0}
  C^{(1)}_{*0}(\theta_{1}) = A^{(1)}_{*(-1)}(\theta_{1}) (I - A^{(1)}_{*0}(\theta_{1}))^{-1} A^{(1)}_{*1}(\theta_{1}) + A_{*0}(\theta_{1}),
\end{align}
and define the canonical form of $Q_{*}^{(1)}(\theta_{1})$ as
\begin{align*}
  \ol{Q}_{*}^{(1)}(\theta_{1}) = \left (\begin{matrix}
C^{(1)}_{*0}(\theta_{1}) & A_{*1}(\theta_{1}) & 0 & 0 & 0 & \cdots \cr
A_{*(-1)}(\theta_{1}) & A_{*0}(\theta_{1}) & A_{*1}(\theta_{1}) & 0 & 0 & \cdots  \cr
 0 & A_{*(-1)}(\theta_{1}) & A_{*0}(\theta_{1}) & A_{*1}(\theta_{1}) & 0 & \cdots \cr
 0 & 0 & A_{*(-1)}(\theta_{1}) & A_{*0}(\theta_{1}) & A_{*1}(\theta_{1}) & \cdots \cr
 \vdots & \vdots & \ddots & \ddots & \ddots & \cdots 
 \end{matrix} \right ).
\end{align*}
Similarly, $\ol{Q}_{*}^{(2)}(\theta_{2})$ is defined. It is easy to see that $\ol{Q}_{*}^{(s)}(0)$ is stochastic for $s=1,2$.

Thus, $\ol{Q}_{*}^{(s)}(\theta_{s})$ is a nonnegative matrix with QBD block structure, and therefore we can apply \thr{super-h 2}. For this, we note the following fact.

\begin{lemma} 
\label{lem:Gamma convex 2}
  For $s=1,2$, $\Gamma^{(2d)}_{s+}$ is a nonempty and bounded convex subset of $\dd{R}^{2}$.
\end{lemma}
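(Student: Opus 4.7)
The proof splits naturally into three assertions---non-emptiness, boundedness, and convexity---and I will work the case $s=1$ throughout, the case $s=2$ being symmetric under swapping coordinates. The argument is a two-dimensional adaptation of \lem{Gamma convex 1}; the only novelty is the infinite-series factor inside $C^{(1)}_{**}$.

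For non-emptiness I claim $\vc{\theta} = \vc{0}$ lies in $\Gamma^{(2d)}_{1+}$ with witness $\vc{h} = \vc{1}$. The condition $A_{**}(\vc{0}) \vc{1} \le \vc{1}$ is immediate since $A_{**}(\vc{0}) = A$ is stochastic. For the second inequality the Neumann term collapses: because $A^{(+0)}_{*0}(0) + A^{(+0)}_{*1}(0) = A^{(+0)}$ is stochastic, $(I - A^{(+0)}_{*0}(0))^{-1} A^{(+0)}_{*1}(0) \vc{1} = \vc{1}$, and hence the Neumann product reduces to $A^{(+1)}_{*(-1)}(0) \vc{1}$. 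Combining the stochasticity of $A$ with that of the $\sr{U}_{+1}$-transition matrix (which agrees with the interior matrix on all non-downward components) yields the identity $A^{(+1)}_{*(-1)}(0) \vc{1} = \sum_j A_{j(-1)} \vc{1}$, after which $C^{(1)}_{**}(\vc{0}) \vc{1} = A_{*+}(\vc{0}) \vc{1} + A^{(+1)}_{*(-1)}(0) \vc{1}$ telescopes to $A \vc{1} = \vc{1}$. Finiteness of $(I - A^{(+0)}_{*0}(0))^{-1}$ is guaranteed by irreducibility, which forces $A^{(+0)}_{*1}(0)$ to be nonzero and hence $A^{(+0)}_{*0}(0)$ to be strictly sub-stochastic.

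For boundedness, any witness $\vc{h}$ for $A_{**}(\vc{\theta}) \vc{h} \le \vc{h}$ certifies $\gamma^{(A_{**})}(\vc{\theta}) \le 1$ via \eqn{cp harmonic 1}, so $\Gamma^{(2d)}_{1+} \subset \Gamma^{(2d)}_+ = \{\vc{\theta}: \gamma^{(A_{**})}(\vc{\theta}) \le 1\}$. Irreducibility of the 2d-QBD forces the support $\{(j,k): A_{jk} \ne 0\}$ to span $\dd{R}^2$, whence the two-dimensional analog of \eqn{phi infty} yields $\gamma^{(A_{**})}(\vc{\theta}) \to \infty$ as $|\vc{\theta}| \to \infty$ in every direction, making $\Gamma^{(2d)}_+$ and therefore $\Gamma^{(2d)}_{1+}$ bounded.

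For convexity I follow the H\"older template of \app{Gamma convex 1}. Given $\vc{\theta}^{(1)}, \vc{\theta}^{(2)} \in \Gamma^{(2d)}_{1+}$ with witnesses $\vc{h}^{(1)}, \vc{h}^{(2)}$ and $\lambda \in (0,1)$, set $g_i = (h_i^{(1)})^{\lambda} (h_i^{(2)})^{1-\lambda}$ and $\vc{\theta} = \lambda \vc{\theta}^{(1)} + (1-\lambda) \vc{\theta}^{(2)}$. Each entry of $A_{**}(\vc{\theta}) \vc{g}$ is a finite sum of terms $e^{j \theta_1 + k \theta_2} [A_{jk}]_{il} g_l$, each factoring as the geometric product of the corresponding terms at $\vc{\theta}^{(1)}$ and $\vc{\theta}^{(2)}$; H\"older applied entry-wise delivers $A_{**}(\vc{\theta}) \vc{g} \le \vc{g}$. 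The same scheme handles $C^{(1)}_{**}$ once $(I - A^{(+0)}_{*0}(\theta_1))^{-1}$ is expanded as the nonnegative Neumann series $\sum_{n \ge 0} A^{(+0)}_{*0}(\theta_1)^n$, rewriting $C^{(1)}_{**}(\vc{\theta})$ as an infinite nonnegative combination of log-linear terms to which term-by-term H\"older applies. The only genuine obstacle is justifying convergence of the Neumann expansion at the convex-combination $\theta_1$; this follows from log-convexity of $\gamma^{(A^{(+0)}_{*0})}(\theta_1)$, which propagates the strict inequalities $\gamma^{(A^{(+0)}_{*0})}(\theta_1^{(s)}) < 1$ at the endpoints to $\lambda \theta_1^{(1)} + (1-\lambda) \theta_1^{(2)}$, after which the rest is routine.
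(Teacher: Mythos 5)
Your proof is correct and follows the paper's own outline: $\vc{0} \in \Gamma^{(2d)}_{s+}$ with witness $\vc{1}$ for non-emptiness, inclusion in the bounded set $\Gamma^{(2d)}_{+}$ for boundedness, and H\"older's inequality on entries of the matrix moment generating functions, exactly as in the proof of \lem{Gamma convex 1}, for convexity. You correctly supply the one extra step the one-dimensional template does not need---expanding $(I - A^{(+0)}_{*0}(\theta_1))^{-1}$ as a nonnegative Neumann series and propagating its convergence to the convex combination via the convexity of $\log \gamma_{\rs{pf}}(A^{(+0)}_{*0}(\theta_1))$---since in \lem{Gamma convex 1} the matrix $C_{0}$ is independent of $\theta$, whereas here the $\theta_1$ dependence sits inside the inverse.
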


This lemma is proved similarly to \lem{Gamma convex 1} using the fact that $\vc{0} \in \Gamma^{(2d)}_{s+}$ for $s=1,2$. The following result is immediate from \thr{super-h 2}.

\begin{lemma}
\label{lem:2d-QBD super-h}
  Under the assumptions of \thr{lower bound 1}, $Q^{(1)}_{*}(\theta_{1})$, equivalently, $\ol{Q}^{(1)}_{*}(\theta_{1})$, has a superharmonic vector for each $\theta_{1} \in \dd{R}$ if and only if the following two conditions hold.
\begin{itemize}
\item [(i)] $c_{p}(A^{(1)}_{*0}(\theta_{1})) > 1$.
\item [(ii)] There exists a $\theta_{2} \in \dd{R}$ such that $\vc{\theta} \equiv (\theta_{1}, \theta_{2}) \in \Gamma^{(2d)}_{1+}$, equivalently, $\vc{\theta} \in \Gamma^{(2d)}_{1e}$.
\end{itemize}
By symmetry, a similar characterization holds for $Q^{(2)}_{*}(\theta_{2})$.
\end{lemma}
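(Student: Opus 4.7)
The plan is to recognize $Q^{(1)}_{*}(\theta_{1})$, with $\theta_{1}\in\dd{R}$ frozen as a parameter, as a nonnegative matrix with QBD block structure in the sense of \sectn{nonnegative}, and then to invoke \thr{super-h 2}(b) after a careful dictionary with the 2D objects of \sectn{tail}. Reading off the blocks of $Q^{(1)}_{*}(\theta_{1})$, the boundary data $(B_{-1},B_{0},B_{1})$ of \sectn{nonnegative} correspond to $(A^{(+1)}_{*(-1)}(\theta_{1}),A^{(+0)}_{*0}(\theta_{1}),A^{(+0)}_{*1}(\theta_{1}))$, the repeated data $(A_{-1},A_{0},A_{1})$ correspond to $(A_{*(-1)}(\theta_{1}),A_{*0}(\theta_{1}),A_{*1}(\theta_{1}))$, and the 1D spectral variable $\theta$ of \sectn{nonnegative} plays the role of $\theta_{2}$. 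Under this identification the matrix MGF $A_{*}(\theta_{2})$ of the repeated block equals $A_{**}(\vc{\theta})$, the canonical boundary matrix $C_{0}$ defined by \eqn{c 0} equals $C^{(1)}_{*0}(\theta_{1})$, and therefore $C_{*}(\theta_{2})=C^{(1)}_{**}(\vc{\theta})$. Consequently the 1D sets of \sectn{existence superh} pull back to $\Gamma^{(1d)}_{0+}=\{\theta_{2}\in\dd{R}: (\theta_{1},\theta_{2})\in\Gamma^{(2d)}_{1+}\}$, and analogously for the equality-version set.

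Next I would verify the hypotheses of \thr{super-h 2}. The irreducibility conditions (\sect{nonnegative}a) and (\sect{nonnegative}b) transfer from the 2d-QBD because the $e^{\pm\theta_{1}}$ rescalings preserve zero patterns. The main ingredient is to observe that, for $\vc{\theta}$ on the level set $\gamma^{(A_{**})}(\vc{\theta})=1$, the equations \eqn{key assumption 2a} and \eqn{key assumption 2b} of \ass{key assumption 2} with $i=1$ are exactly the conditions \eqn{key assumption 1a} and \eqn{key assumption 1b} of \ass{key assumption 1} for the present 1D QBD, under the identifications $\vc{h}^{(0)}(\theta_{2})=\vc{h}^{(01)}(\vc{\theta})$ and $\vc{h}^{(A_{*})}(\theta_{2})=\vc{h}^{(A_{**})}(\vc{\theta})$. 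Thus \ass{key assumption 2}, a standing hypothesis of \thr{lower bound 1}, delivers \ass{key assumption 1} at the boundary value of $\theta_{2}$ that the proof of \thr{super-h 2} actually uses.

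With the hypotheses in place, the two directions of the lemma are almost immediate. For necessity of (i), the opening argument in the proof of \lem{super-h 2} shows that any superharmonic vector of $Q^{(1)}_{*}(\theta_{1})$ forces $c_{p}(B_{0})>1$, which is (i). Given (i), the canonical form $\ol{Q}^{(1)}_{*}(\theta_{1})$ is well defined, and \thr{super-h 2}(b) gives $c_{p}(Q^{(1)}_{*}(\theta_{1}))\ge 1$ if and only if $\Gamma^{(1d)}_{0+}\ne\emptyset$, which under the dictionary above is condition (ii); the equivalent formulation via the equality set follows from the ``replaced by'' clause of \thr{super-h 2}(b). By \lem{super-h 1} this is equivalent to the existence of a superharmonic vector. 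The claim for $Q^{(2)}_{*}(\theta_{2})$ is obtained by swapping coordinates and invoking \ass{key assumption 2} with $i=2$.

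The main obstacle I expect is the dictionary step: the paper carries several layers of boundary notation ($A^{(+0)}_{*k}$, $A^{(+1)}_{*k}$, $A^{(1)}_{*k}$, $C^{(1)}_{*0}$, $C^{(1)}_{**}$), and one has to align \ass{key assumption 2}, stated globally in 2D form, with the 1D structure that \thr{super-h 2} sees. Once this bookkeeping is handled, the lemma is essentially a restatement of \thr{super-h 2}(b) in the notation of \sectn{tail}.
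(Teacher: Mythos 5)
Your proposal is correct and essentially coincides with the paper's (unwritten) proof, which dismisses the lemma as ``immediate from Theorem~\ref{thr:super-h 2}''; what you have done is spell out the dictionary that makes that ``immediate'' rigorous. Your block identification $(B_{-1},B_{0},B_{1},A_{-1},A_{0},A_{1})=(A^{(+1)}_{*(-1)}(\theta_{1}),A^{(+0)}_{*0}(\theta_{1}),A^{(+0)}_{*1}(\theta_{1}),A_{*(-1)}(\theta_{1}),A_{*0}(\theta_{1}),A_{*1}(\theta_{1}))$ with $\theta$ playing the role of $\theta_{2}$ is exactly right, the matching $C_{0}=C^{(1)}_{*0}(\theta_{1})$, $A_{*}(\theta_{2})=A_{**}(\vc{\theta})$, $C_{*}(\theta_{2})=C^{(1)}_{**}(\vc{\theta})$ follows from \eqn{c 0} and \eqn{C *0}, and the key observation that \ass{key assumption 2} (for $i=1$, restricted to $\theta_{1}$ fixed and $\theta_{2}$ on the level set $\gamma^{(A_{**})}=1$) is literally \ass{key assumption 1} for this one-dimensional QBD is exactly the point needed to invoke \thr{super-h 2}(b). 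The only thing I would add for polish: note explicitly that (ii) already implies (i) (since $\Gamma^{(1d)}_{0+}\neq\emptyset$ forces $C_{0}$ finite, hence $c_{p}(B_{0})>1$), so the lemma's pairing of (i) and (ii) is slightly redundant in the ``if'' direction; condition (i) is carried separately mainly so that the canonical form is well defined before \thr{super-h 2} is even stated.
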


It follows from this lemma and  the Wiener-Hopf factorization that, for $s = 1,2$,
\begin{align}
\label{eqn:cp s 1}
  \log c_{p}(R^{(s)}) = \sup\{ \theta_{s} \ge 0; c_{p}(Q^{(s)}_{*}(\theta_{s})) \ge 1\} = \theta^{(s,\cp)}_{s},
\end{align}
as long as $c_{p}(A^{(s)}_{*0}(\theta^{(s,\cp)}_{s})) > 1$. We are now ready to accomplish our main task.

\subsection{The proof of \thr{lower bound 1}}
\label{sect:proof}

From \eqn{geometric s}, \eqn{cp s 1} and Caucy-Hadamard inequality (e.g., see Theorem 14.8 of Volume I of \cite{Mark1977}), we have the following lower bound.
\begin{align}
\label{eqn:lower bound 5}
  \liminf_{n \to \infty} \frac 1n \log \dd{P}(L_{s} > n, L_{3-s} = \ell, J = k) \ge - \theta^{(s,\cp)}_{s}, \qquad s=1,2.
\end{align}
By \lem{lower bound 2}, this lower bound is tight if $\theta^{(s,\cp)}_{s} = \theta^{(s,\max)}_{s}$ because $\vc{\theta}^{(s,\max)} \in \ol{\Gamma}^{{(2d)}}_{+}$. Thus, it remains to consider the case that $\theta^{(s,\cp)}_{s} < \theta^{(s,\max)}_{s}$. In this case, it follows from \thr{super-h 2} and \lem{t-invariant m 1} that $Q^{(s)}(\theta^{(s,\cp)}_{s})$ is 1-positive, which is equivalent to the fact that $e^{\theta^{(s,\cp)}_{s}} R^{(s)}$ is 1-positive by the Wiener-Hopf factorization. We consider Categories (I) and (II-1), separately, for $s=1$. This is sufficient for the proof because Category (II-2) is symmetric to Category (II-1).

Assume that the 2d-QBD process is in Category (I) and that $\theta^{(1,\cp)}_{1} < \theta^{(1,\max)}_{1}$. In this case $\tau_{s} = \theta^{(s,\cp)}_{s}$ for $s=1,2$. Hence, \eqn{lower bound 5} implies \eqn{lower bound 1}. To prove \eqn{direction 1}, we apply Theorem 4.1 of \cite{MiyaZhao2004} (see also Theorem 2.1 of \cite{LiMiyaZhao2007} or Proposition 3.1 of \cite{Miya2009}). For this, we consider the left and right nonnegative invariant vectors of $Q_{*}^{(1)}(\theta^{(1,\cp}_{1})$, which is a nonnegative matrix with QBD structure and unit convergence parameter.

Since $\varphi(\tau_{1}-\epsilon,0) < \infty$ for any $\epsilon > 0$, we have, similarly to the proof of \thr{upper bound 1},
\begin{align*}
  \limsup_{n \to \infty} \frac 1n \log \dd{P}(L_{1} > n) \le - \tau_{1} = \theta^{(1,\cp)}_{1}.
\end{align*}

 then We now consider the matrix geometric form of the stationary distribution:
\begin{align*}
  \vc{\pi}^{(1)}_{n} \vc{u} &= \vc{\pi}^{(1)}_{1} (R^{(1)})^{n-1} \vc{u}\\
  &= \sum_{k, i}  \sum_{\ell, j} \frac {\vc{\pi}^{(1)}_{1}(k,i)} {x_{ki}} x_{ki} (R^{(1)})^{n-1}_{(ki)(\ell j)} u_{\ell j}\\
  &= e^{-\alpha (n-1)} \sum_{k, i}  \sum_{\ell, j}\frac {\vc{\pi}^{(1)}_{1}(k,i)} {x_{ki}} x_{ki} (e^{\alpha} R^{(1)})^{n-1}_{(ki)(\ell j)} \frac {1} {x_{\ell j}} x_{\ell j} u_{\ell j}\\
  &= e^{-\alpha (n-1)}  \sum_{\ell, j} x_{\ell j} u_{\ell j} \sum_{k, i} (\tilde{G}^{(1+,\alpha)})^{n-1}_{(\ell j)(ki)} \frac {\vc{\pi}^{(1)}_{1}(k,i)} {x_{ki}} \\
\end{align*}

\subsection*{Acknowledgements}
The author is grateful to an anonymous referee for helpful suggestions to improve exposition. This work is supported by Japan Society for the Promotion of Science under grant No.\ 24310115. A part of this work was presented at a workshop of Sigmetrics 2014 (see its abstract \cite{Miya2014a}).

\def\cprime{$'$} \def\cprime{$'$} \def\cprime{$'$} \def\cprime{$'$}
  \def\cprime{$'$} \def\cprime{$'$} \def\cprime{$'$}


\begin{thebibliography}{40}
\expandafter\ifx\csname natexlab\endcsname\relax\def\natexlab#1{#1}\fi
\expandafter\ifx\csname url\endcsname\relax
  \def\url#1{\texttt{#1}}\fi
\expandafter\ifx\csname urlprefix\endcsname\relax\def\urlprefix{URL }\fi
\providecommand{\eprint}[2][]{\url{#2}}

\bibitem[{Avram et~al.(2001)Avram, Dai and Hasenbein}]{AvraDaiHase2001}
\textsc{Avram, F.}, \textsc{Dai, J.~G.} and \textsc{Hasenbein, J.~J.} (2001).
\newblock Explicit solutions for variational problems in the quadrant.
\newblock \textit{Queueing Systems}, \textbf{37} 259--289.

\bibitem[{Bean et~al.(2000)Bean, Pollett and Taylor}]{BeanPollTayl2000}
\textsc{Bean, N.}, \textsc{Pollett, P.} and \textsc{Taylor, P.} (2000).
\newblock Quasistationary distributions for level-dependent
  quasi-birth-and-death processes.
\newblock \textit{Stochastic Models}, \textbf{16} 511--541.

\bibitem[{Bertsimas et~al.(1998)Bertsimas, Paschalidis and
  Tsitsiklis}]{BertPascTsit1998}
\textsc{Bertsimas, D.}, \textsc{Paschalidis, I.} and \textsc{Tsitsiklis, J.~N.}
  (1998).
\newblock On the large deviations behaviour of acyclic networks of ${G/G/1}$
  queues.
\newblock \textit{Annals of Applied Probability}, \textbf{8} 1027--1069.

\bibitem[{Borovkov and Mogul{\cprime}ski{\u\i}(1996)}]{BoroMogu1996a}
\textsc{Borovkov, A.~A.} and \textsc{Mogul{\cprime}ski{\u\i}, A.~A.} (1996).
\newblock The second function of deviations and asymptotic problems of the
  reconstruction and attainment of a boundary for multidimensional random
  walks.
\newblock \textit{Sibirsk. Mat. Zh.}, \textbf{37} 745--782, i.
\newblock \urlprefix\url{http://dx.doi.org/10.1007/BF02104660}.

\bibitem[{Borovkov and Mogul{\cprime}ski{\u\i}(2001)}]{BoroMogu2001}
\textsc{Borovkov, A.~A.} and \textsc{Mogul{\cprime}ski{\u\i}, A.~A.} (2001).
\newblock Large deviations for {Markov} chains in the positive quadrant.
\newblock \textit{Russian Mathematical Surveys}, \textbf{56} 803--916.
\newblock \urlprefix\url{http://dx.doi.org/10.1070/RM2001v056n05ABEH000398}.

\bibitem[{Budhiraja and Lee(2009)}]{BudhLee2009}
\textsc{Budhiraja, A.} and \textsc{Lee, C.} (2009).
\newblock Stationary distribution convergence for generalized {Jackson}
  networks in heavy traffic.
\newblock \textit{Mathematics of Operations Research}, \textbf{34} 45--56.

\bibitem[{Chang(1995)}]{Chan1995}
\textsc{Chang, C.-S.} (1995).
\newblock Sample path large deviations and intree networks.
\newblock \textit{Queueing Systems}, \textbf{20} 7--36.

\bibitem[{Dai and Miyazawa(2011)}]{DaiMiya2011}
\textsc{Dai, J.~G.} and \textsc{Miyazawa, M.} (2011).
\newblock Reflecting {Brownian} motion in two dimensions: Exact asymptotics for
  the stationary distribution.
\newblock \textit{Stochastic Systems}, \textbf{1} 146--208.
\newblock \urlprefix\url{http://dx.doi.org/10.1214/10-SSY022}.

\bibitem[{Dai and Miyazawa(2013)}]{DaiMiya2013}
\textsc{Dai, J.~G.} and \textsc{Miyazawa, M.} (2013).
\newblock Stationary distribution of a two-dimensional {SRBM}: geometric views
  and boundary measures.
\newblock \textit{Queueing Systems}, \textbf{74} 181--217.
\newblock \urlprefix\url{http://dx.doi.org/10.1007/s11134-012-9339-1}.

\bibitem[{Fayolle et~al.(1999)Fayolle, Iasnogorodski and
  Malyshev}]{FayoIasnMaly1999}
\textsc{Fayolle, G.}, \textsc{Iasnogorodski, R.} and \textsc{Malyshev, V.}
  (1999).
\newblock \textit{Random Walks in the Quarter-Plane: Algebraic Methods,
  Boundary Value Problems and Applications}.
\newblock Springer, New York.

\bibitem[{Fujimoto and Takahashi(1996)}]{FujiTaka1996}
\textsc{Fujimoto, K.} and \textsc{Takahashi, Y.} (1996).
\newblock Tail behavior of the steady-state distributions in two-stage tandem
  queues: numerical experiment and conjecture.
\newblock \textit{Journal of the Operations Research Society of Japan},
  \textbf{39} 525 -- 540.

\bibitem[{Fujimoto et~al.(1998)Fujimoto, Takahashi and
  Makimoto}]{FujiTakaMaki1998}
\textsc{Fujimoto, K.}, \textsc{Takahashi, Y.} and \textsc{Makimoto, N.} (1998).
\newblock Asymptotic properties of stationary distributions in two-stage tandem
  queueing systems.
\newblock \textit{Journal of the Operations Research Society of Japan},
  \textbf{41} 118--141.

\bibitem[{Gamarnik and Zeevi(2006)}]{GamaZeev2006}
\textsc{Gamarnik, D.} and \textsc{Zeevi, A.} (2006).
\newblock Validity of heavy traffic steady-state approximation in generalized
  {J}ackson networks.
\newblock \textit{Ann. Appl. Probab.}, \textbf{16} 56--90.

\bibitem[{Ganesh and Anantharam(1996)}]{GaneAnan1996}
\textsc{Ganesh, A.} and \textsc{Anantharam, V.} (1996).
\newblock Stationary tail probabilities in exponential server tandems with
  renewal arrivals.
\newblock \textit{Queueing Systems}, \textbf{22} 203--247.

\bibitem[{Katou et~al.(2004)Katou, Makimoto and Takahashi}]{KatoMakiTaka2004}
\textsc{Katou, K.}, \textsc{Makimoto, N.} and \textsc{Takahashi, Y.} (2004).
\newblock Upper bound for the decay rate of the marginal queue-length
  distribution in a two-node {Markovian} queueing system.
\newblock \textit{Journal of the Operations Reserch}.

\bibitem[{Katou et~al.(2008)Katou, Makimoto and Takahashi}]{KatoMakiTaka2008}
\textsc{Katou, K.}, \textsc{Makimoto, N.} and \textsc{Takahashi, Y.} (2008).
\newblock Upper bound for the decay rate of the joint queue-length distribution
  in a two-node {Markovian} queueing system.
\newblock \textit{Queueing Systems}, \textbf{58} 161--189.

\bibitem[{Kijima(1993)}]{Kiji1993}
\textsc{Kijima, M.} (1993).
\newblock Quasi-stationary distributions of single-server phase-type queues.
\newblock \textit{Mathematics of Operations Research}, \textbf{18} 423--437.

\bibitem[{Kobayashi and Miyazawa(2012)}]{KobaMiya2012}
\textsc{Kobayashi, M.} and \textsc{Miyazawa, M.} (2012).
\newblock Revisit to the tail asymptotics of the double {QBD} process:
  Refinement and complete solutions for the coordinate and diagonal directions.
\newblock In \textit{Matrix-Analytic Methods in Stochastic Models} (G.~Latouche
  and M.~S. Squillante, eds.). Springer, 147--181.
\newblock ArXiv:1201.3167.

\bibitem[{Kobayashi and Miyazawa(2014)}]{KobaMiya2014}
\textsc{Kobayashi, M.} and \textsc{Miyazawa, M.} (2014).
\newblock Tail asymptotics of the stationary distribution of a two dimensional
  reflecting random walk with unbounded upward jumps.
\newblock \textit{Advances in Applied Probability}, \textbf{46} 365--399.

\bibitem[{Kobayashi et~al.(2010)Kobayashi, Miyazawa and
  Zhao}]{KobaMiyaZhao2010}
\textsc{Kobayashi, M.}, \textsc{Miyazawa, M.} and \textsc{Zhao, Y.~Q.} (2010).
\newblock Tail asymptotics of the occupation measure for a {M}arkov additive
  process with an {$M/G/1$}-type background process.
\newblock \textit{Stochastic Models}, \textbf{26} 463--486.

\bibitem[{Latouche and Ramaswami(1999)}]{LatoRama1999}
\textsc{Latouche, G.} and \textsc{Ramaswami, V.} (1999).
\newblock \textit{Introduction to matrix analytic methods in stochastic
  modeling}.
\newblock ASA-SIAM Series on Statistics and Applied Probability, Society for
  Industrial and Applied Mathematics (SIAM), Philadelphia, PA.

\bibitem[{Li et~al.(2007)Li, Miyazawa and Zhao}]{LiMiyaZhao2007}
\textsc{Li, H.}, \textsc{Miyazawa, M.} and \textsc{Zhao, Y.~Q.} (2007).
\newblock Geometric decay in a {QBD} process with countable background states
  with applications to a join-the-shortest-queue model.
\newblock \textit{Stoch. Models}, \textbf{23} 413--438.
\newblock \urlprefix\url{http://dx.doi.org/10.1080/15326340701471042}.

\bibitem[{Li and Zhao(2002)}]{LiZhao2002}
\textsc{Li, Q.-L.} and \textsc{Zhao, Y.~Q.} (2002).
\newblock A constructive method for finding {$\beta$}-invariant measures for
  transition matrices of {M/G/1} type.
\newblock In \textit{Matrix Analytic Methods: Theory and Application}
  (G.~Latouche and P.~Taylor, eds.). World Scientific Publishing, Singpore,
  237--263.

\bibitem[{Li and Zhao(2003)}]{LiZhao2003}
\textsc{Li, Q.-L.} and \textsc{Zhao, Y.~Q.} (2003).
\newblock {$\beta$}-invariant measures for transition matrices of {GI/M/1}
  type.
\newblock \textit{Stochastic Models}, \textbf{19} 201--233.

\bibitem[{Markushevich(1977)}]{Mark1977}
\textsc{Markushevich, A.~I.} (1977).
\newblock \textit{Theory of functions of a complex variable. {V}ol. {I}, {II},
  {III}}.
\newblock English ed. Chelsea Publishing Co., New York.
\newblock Translated and edited by Richard A. Silverman.

\bibitem[{Miyazawa(2003)}]{Miya2003}
\textsc{Miyazawa, M.} (2003).
\newblock Conjectures on decay rates of tail probabilities in generalized
  {Jackson} and batch movement networks.
\newblock \textit{Operations Research}, \textbf{46} 74--98.

\bibitem[{Miyazawa(2009)}]{Miya2009}
\textsc{Miyazawa, M.} (2009).
\newblock Tail decay rates in double {QBD} processes and related reflected
  random walks.
\newblock \textit{Math. Oper. Res.}, \textbf{34} 547--575.
\newblock \urlprefix\url{http://dx.doi.org/10.1287/moor.1090.0375}.

\bibitem[{Miyazawa(2011)}]{Miya2011}
\textsc{Miyazawa, M.} (2011).
\newblock Light tail asymptotics in multidimensional reflecting processes for
  queueing networks.
\newblock \textit{TOP}, \textbf{19} 233--299.

\bibitem[{Miyazawa(2014)}]{Miya2014a}
\textsc{Miyazawa, M.} (2014).
\newblock Tail asymptotics of the stationary distribution for a two node
  generalized jackson network.
\newblock \textit{ACM SIGMETRICS Performance Evaluation Review}, \textbf{42}
  70--72.

\bibitem[{Miyazawa(2015)}]{Miya2015}
\textsc{Miyazawa, M.} (2015).
\newblock Diffusion approximation approximation for stationary analysis and
  their networks: A review.
\newblock \textit{Journal of the Operations Research Society of Japan},
  \textbf{58} 104--148.

\bibitem[{Miyazawa and Zhao(2004)}]{MiyaZhao2004}
\textsc{Miyazawa, M.} and \textsc{Zhao, Y.~Q.} (2004).
\newblock The stationary tail asymptotics in the {$GI/G/1$}-type queue with
  countably many background states.
\newblock \textit{Adv. in Appl. Probab.}, \textbf{36} 1231--1251.
\newblock \urlprefix\url{http://dx.doi.org/10.1239/aap/1103662965}.

\bibitem[{Miyazawa and Zwart(2012)}]{MiyaZwar2012}
\textsc{Miyazawa, M.} and \textsc{Zwart, B.} (2012).
\newblock Wiener-hopf factorizations for a multidimensional {Markov} additive
  process and their applications to reflected processes.
\newblock \textit{Stochastic Systems}, \textbf{2} 67--114.

\bibitem[{Ney and Nummelin(1987)}]{NeyNumm1987a}
\textsc{Ney, P.} and \textsc{Nummelin, E.} (1987).
\newblock {Markov} additive processes {II}. large deviations.
\newblock \textit{Annals of Probability}, \textbf{15} 593--609.

\bibitem[{Nummelin(1984)}]{Numm1984}
\textsc{Nummelin, E.} (1984).
\newblock \textit{General irreducible {Markov} chains and non-negative
  operators}.
\newblock Cambridge University Press.

\bibitem[{Ozawa(2012)}]{Ozaw2012}
\textsc{Ozawa, T.} (2012).
\newblock Positive recurrence and transience of multidimensional skip-free
  markov modulated reflecting random walks.
\newblock \eprint{1208.3043}, \urlprefix\url{http://arxiv.org/abs/1208.3043}.

\bibitem[{Ozawa(2013)}]{Ozaw2013}
\textsc{Ozawa, T.} (2013).
\newblock Asymptotics for the stationary distribution in a discrete-time
  two-dimensional quasi-birth-and-death process.
\newblock \textit{{Queueing Systems}}, \textbf{74} 109--149.

\bibitem[{Rockafellar(1970)}]{Rock1970}
\textsc{Rockafellar, R.~T.} (1970).
\newblock \textit{Convex analysis}.
\newblock Princeton Mathematical Series, No. 28, Princeton University Press,
  Princeton, N.J.

\bibitem[{Seneta(1981)}]{Sene1981}
\textsc{Seneta, E.} (1981).
\newblock \textit{Non-negative matrices and {Markov} chains}.
\newblock 2nd ed. Springer Series in Statistics, Springer.

\bibitem[{Takahashi(1981)}]{Taka1981}
\textsc{Takahashi, Y.} (1981).
\newblock Asymptotic exponentiality of the tail of the waiting-time
  distribution in a ph/ph/c queue.
\newblock \textit{Advances in Applied Probability}, \textbf{13} 619--630.

\bibitem[{Vere-Jones(1967)}]{Vere1967}
\textsc{Vere-Jones, D.} (1967).
\newblock Ergodic properties of nonnegative matrices-{I}.
\newblock \textit{Pacific Journal of Mathematics}, \textbf{22} 361--386.

\end{thebibliography}

\end{document}